\def\1{{\mathchoice {1\mskip-4mu\mathrm l}      
{1\mskip-4mu\mathrm l} 
{1\mskip-4.5mu\mathrm l} {1\mskip-5mu\mathrm l}}}
\newtheorem{theorem}{Theorem}[section]
\newtheorem{lemma}[theorem]{Lemma}
\newtheorem{proposition}[theorem]{Proposition}
\newtheorem{remark}[theorem]{Remark}
\newtheorem{definition}{Definition}
\newtheorem*{assumption}{Assumption W}
\def\P{{\mathbb P}} 
\def\E{{\mathbb E}} 
\def\Z{{\mathbb Z}} 
\def\N{{\mathbb{N}}}
\def \R {{\mathbb{R}}}
 \newcommand{\ssup}[1] {{{\scriptscriptstyle{({#1}})}}} 
\def\comment#1{} 
\newcommand{\Fcal}   {{\mathcal F }}
\newcommand{\Pcal}   {{\mathcal P }}
\newcommand{\X}{{\bf X}}
\newcommand{\Y}{{\bf Y}}
\def\ignore#1{}
\def\ti{\to\infty}
\newcommand{\rmd}{\,\mathrm{d}}
\def\phi{\varphi}
\def\limt{\lim_{t\to\infty}}
\def\limt0{\lim_{t\to 0}}
\def\|{\,|\,}
\title{Vertex-reinforced jump process on the integers with nonlinear reinforcement}
\author{Andrea Collevecchio}
\address[\sc A. Collevecchio]{School of Mathematics, Monash University, Victoria 3800, Australia}
\email{andrea.collevecchio@monash.edu}
\author{Tuan-Minh Nguyen}
\address[\sc T.M. Nguyen]{School of Mathematics, Monash University, Victoria 3800, Australia}
\email{tuanminh.nguyen@monash.edu}
\author{Stanislav Volkov}
\address[\sc S. Volkov]{Centre for Mathematical Sciences, Lund University, Lund 22100-118, Sweden}
\email{stanislav.volkov@matstat.lu.se}
\begin{document}
\begin{abstract}
We consider a non-linear vertex-reinforced jump process (VRJP($w$)) on $\mathbb{Z}$  with  an increasing measurable weight function $w:[1,\infty)\to [1,\infty)$ and  initial weights equal to one. Our main goal is to study the asymptotic behaviour of VRJP($w$) depending on the integrability of  the reciprocal of $w$. In particular, we prove that if {$\int_1^{\infty} \frac{\rmd u}{w(u)} =\infty$} then the process is recurrent, i.e. it visits each vertex infinitely often and all local times are unbounded. On the other hand, if  {$\int_1^{\infty} \frac{\rmd u}{w(u)} <\infty$} and there exists a $\rho>0$ such that 
$t \mapsto w(t)^{\rho}\int_t^{\infty}\frac{du}{w(u)}$ is non-increasing then the process will eventually get stuck on exactly three vertices, and there is only one vertex with unbounded local time. We also show that if the initial weights are all the same, VRJP on $\mathbb{Z}$ cannot be transient, i.e. there exists at least one vertex that is visited infinitely often. Our results extend the ones previously obtained by Davis and Volkov [Probab. Theory Relat. Fields (2002)] who showed that VRJP with linear reinforcement on $\mathbb{Z}$ is recurrent.  
\end{abstract}
\subjclass[2010]{60G17, 60K35, 60G20}
\keywords{self-interacting processes, random processes with reinforcement, vertex-reinforced jump processes, localization}
\maketitle

\tableofcontents

\section{Introduction}
Reinforced random walks attracted the attention of many researchers in the past 30 years. 
This field  started with the seminal work of Coppersmith and Diaconis \cite{CD}. They introduced linear edge-reinforced  random walk (LERRW) which can be roughly described as follows. It is a discrete time process. It takes values on the vertices of a locally finite graph, and at each step it jumps to nearest neighbour vertices with transition probabilities that are updated each time, according to the following rule. {To each edge, we assign} an initial positive weight, and each time the edge is traversed its weight is increased by one. The probability that the process traverses at time $t+1$ a given edge $e$,  incident to the position of the walk at time $t$, is proportional to the total weight of $e$ by time $t$. This process was extensively studied (see, e.g. \cite{ KR1999, MOR2018}). In particular, a long-standing open problem in the field was the study of LERRW on $\Z^d$, with $d \ge 2$, and, in particular,  to establish whether  this process is recurrent/transient, depending on its initial weights. {The existence of a recurrent phase was proved} by Sabot and Tarr\`es \cite{ST1} and Angel, Crawford and Kozma \cite{ACK}, using different approaches.
For an alternative proof, see \cite{CZ19}.  In particular, \cite{ST1} created a link between LERRW and a super--symmetric hyperbolic sigma model called $H^{2|2}$.   This connection was achieved by first proving that LERRW is a time change of a continuous time reinforced process, called Vertex-Reinforced Jump Process (VRJP). They then showed that the latter process is a mixture of markovian jump processes, and the mixing measure is exactly the partition function from $H^{2|2}$ model. {Using the relation between VRJP and LERRW, the existence of a transient phase of LERRW in dimension $d \ge 3$ was showed by Disertori, Sabot and Tarr\`es \cite{D2015}; the transience in dimension $d=2$ for any initial constant weights was proved by Sabot and Zeng \cite{SZ2019}; and the uniqueness of the phase transition was finally proved by Poudevigne \cite{Poudevigne}.}

VRJP is the main object studied in the present paper. VRJP was conceived by W. Werner and first studied in \cite{DV2002}. It is a continuous time process, which reinforces the vertices instead of the edges, and it jumps to nearest neighbours with probability proportional to their local times.
Given the past and the actual position of the particle, the time of the next jump is  exponentially distributed  with average equal to the sum of the local times of the neighbours of vertex currently visited. Roughly speaking, the larger the weight, the more likely is the vertex to be visited, creating a ``rich-gets-richer" effect. 
A formal definition of VRJP is given in Section \ref{se:main}.
For a more detailed literature review on this proces see Section~\ref{LitVRJP}. 
As this process reinforces the vertices, its behaviour should be compared to the one of the so-called vertex-reinforced random walk (VRRW). The latter is a discrete-time process, which jumps to the nearest neighbours, and  each time a vertex is visited, its weight is increased by one.  The probability that it chooses to jump to a particular neighbour $x$ is proportional to {the weight of $x$ at the time} of the jump.  
This process, when defined on $\Z$, localizes on a finite set \cite{PV1999} and was proved to localize on exactly {\bf five} vertices by Tarr\`es \cite{T2004}.
In a sequence of papers, it has been considered non-linear versions of VRRW where the probability to jump to a given vertex $x$ is proportional to a weight function $w$ applied to the weight of $x$ by the time of the jump.
Notably, up to this time {(see \cite{BSS14a}, \cite{BSS14}, \cite{T2004} and \cite{V06}) VRRW on $\Z$} have been proved to localize on $2, 4, 5, 7, 9, \cdots$ vertices depending on the strength of their reinforcement weight function $w$. {In particular, the authors in \cite{BSS14a, BSS14} showed that VRRW cannot localize on exactly 3 points and they also conjectured that} there exists a $w$ such that the corresponding VRRW is recurrent but spends  asymptotically all of its time on only three sites.
On the other hand, {strongly {edge}-reinforced} random walk localizes on one edge. This was proved on $\Z$ by Davis \cite{Dav} and on general graph and general reinforcement function $w$, satisfying $\sum_i 1/w(i) <\infty$, by Cotar and Thacker \cite{Tha}. See also \cite{Aka, LT2007, LT2008}. 

In this paper we study a general version of VRJP on $\Z$ with initial weights equal to 1, where the probability to jump from a vertex to another is determined by an increasing function $w$ of the local times. We call $w \colon [1, \infty) \mapsto [1, \infty)$ the weight  (or reinforcement) function. Our main results can be summarized as follows (See Theorem~\ref{th:main} below for a precise statement). If  $\int_1^{\infty}\frac{\rmd u}{w(u)}<\infty$ and there exists a $\rho>0$ such that $t\mapsto w(t)^{\rho}\int_t^{\infty}\frac{du}{w(u)}$ is non-increasing on $[1,\infty)$ then VRJP localizes on {\bf exactly three} vertices. Moreover, all the local times are a.s. finite with the exception of one vertex.  On the other hand, if $\int_1^{\infty}\frac{\rmd u}{w(u)}=\infty$ then VRJP is recurrent, i.e. it visits each vertex infinitely often, and all the local times are unbounded.

\section{Model and main result}\label{se:main}
In this section, we define rigorously a generalized version of VRJP, on general graphs. 
\noindent 
Let $G=(V,E)$ be a locally finite connected, undirected graph without loops, i.e. without edges whose endpoints coincide, where $V$ and $E$  stand, respectively, for the set of vertices and the set of edges. Fix a collection of non-negative real numbers $\ell = (\ell_v)_{v \in V}$ and a measurable weight function $w:[0,\infty)\to [0,\infty)$. We consider a continuous time process ${\bf X} = (X_t)_t$, so-called general {vertex-reinforced jump process} with parameters $(\ell, w)$, denoted VRJP($\ell, w$), which takes values on the vertices of $G$,  and is defined as follows. 
\begin{enumerate}
\item[i.] It is a c\`adl\`ag process, and jumps between nearest neighbour vertices.
\item[ii.] For each time $t>0$, conditionally to the past  $\mathcal{F}_t := \sigma\{X_{s},s\le t\}$, the probability that  exactly one jump occurs  during  the time interval $(t,t+h]$, and is  towards a neighbour $v$ of $X_t$  is given by
\begin{equation} \label{eq:smint}
w\left(\ell_v+\int_0^t \1_{\{X_s=v\}}\text{d}s \right)\cdot h+o(h),
\end{equation}
The probability of more than one jump  in $(t, t+h]$ is $o(h)$.
\end{enumerate} 
For each vertex $v\in V$, we set  
$$L(v,t)=\ell_v+\int_0^t\1_{\{X_s=v\}}\text{d}s,$$
i.e.  the total time spent at vertex $v$ up to time $t$ plus the initial value $\ell_v$. We call $(L(v,t), v\in V)_{t\ge0}$ the \textbf{local times process} of $\X$. {Set $c=\inf_{v\in V} \ell_v$}. The process $\X$ is either called \textbf{strongly reinforced} if {$\int_{c}^{\infty}\frac{\rmd u}{w(u)}<\infty$} or \textbf{weakly reinforced} if ${\int_{c}^{\infty}\frac{\rmd u}{w(u)}=\infty}$.   When $\ell_v \equiv 1$ for all $v\in V$,  we simply call the process defined above VRJP($w$). 

This paper focuses on VRJP($w$) defined on $\Z$. For simplicity, unless we state otherwise, we assume that this process starts from $0$ and with an abuse of notation, we identify $\Z$ with the set of its vertices. Additionally, {we require that the weight function} $w$ is (strictly) increasing. Notice that the law of $\mathbf{X}$ will not change if the weight function $w$ is replaced by its right-continuous modification (which always exists for any increasing function). To summarize, the following assumption will be needed throughout the paper. 
\begin{assumption}
\label{assu:W}  We assume that $w\colon [0, \infty) \mapsto [0, \infty)$ is measurable and increasing. {For simplicity, we assume additionally} that $w$ is right-continuous, $w(0)=0$ and $w(1) =1$.
\end{assumption}
We can now formulate our main result as follows.

\begin{theorem}\label{th:main}  Let $\X$ be VRJP($w$) on $\Z$, where $w$ satisfies Assumption-W.  
\begin{enumerate}
\item[(a)]  If $\X$ is weakly reinforced, i.e.
$$ \int_{1}^\infty \frac {{\rm d}s}{w(s)}   =\infty,$$
then $\X$ is recurrent i.e. it visits every point infinitely often and $L(v,\infty)=\infty$ a.s. for all $v\in \mathbb{Z}$.
\item[(b)] If $\X$ is strongly reinforced, i.e.
$$ \int_{1}^\infty \frac {{\rm d}s}{w(s)}   <\infty,$$
and there exists a $\rho>0$ such that the map
\begin{equation}\label{eq:assu} t \mapsto w(t)^{\rho}\int_t^{\infty}\frac{\rmd s}{w(s)}\quad \text{is non-increasing}\end{equation} 
then the $\X$ localizes on exactly 3 vertices and exhibits unbounded local time in exactly one vertex. More precisely, a.s. {there exists a unique} vertex $v \in \Z$ such that  $L(v, \infty) = \infty$,  whereas $L(x, \infty) < \infty$, for all $x  \neq v$, and the set $\Z \setminus\{v-1, v, v+1\}$ is visited finitely many times.
\end{enumerate}
\end{theorem}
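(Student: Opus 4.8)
The plan is to pass from the continuous-time dynamics to a pure local-time bookkeeping and to translate every recurrence/localization question into the convergence or divergence of a series built from $1/w$. Recall that while $\X$ sits at $x$, its sojourn time is exponential with rate $w(L(x-1,t))+w(L(x+1,t))$, that on leaving it jumps to $x\pm1$ with probability proportional to $w(L(x\pm1,t))$, and that only $L(x,\cdot)$ grows during the sojourn; the Markov property at the jump times lets one restart the analysis from an arbitrary local-time profile. The key estimate I would isolate first is a \emph{crossing lemma}: if $b$ is a vertex whose local time diverges, then each attempt of $\X$ to step past a neighbour of $b$, against the pull of $b$, succeeds with probability of order $1/w(L(b,\cdot))$, while $L(b,\cdot)$ grows by $\Theta(1)$ between successive attempts; summing these escape probabilities shows that such an edge is crossed only finitely often if and only if $\sum_k 1/w(ck)\asymp\int^\infty \rmd s/w(s)<\infty$. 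This single mechanism ties the whole dichotomy to the integrability of $1/w$.

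For part (a) I would combine the crossing lemma with non-transience. Since VRJP($w$) with constant initial weights cannot be transient (an auxiliary result of the paper), a.s. some vertex is visited infinitely often. The range cannot be a finite interval $\{a,\dots,b\}$: then $a$ would be visited infinitely often while $a-1$ is not, yet with $\int^\infty \rmd s/w(s)=\infty$ the crossing lemma forces infinitely many escapes from $a$ to $a-1$, a contradiction. Hence the range is infinite, and propagating ``visited infinitely often'' from any seed vertex to its neighbours along $\Z$ shows that \emph{every} vertex is visited infinitely often. The same estimate run in the opposite direction gives $L(v,\infty)=\infty$ for all $v$: a vertex $v$ visited infinitely often whose neighbour has growing local time has sojourns of order $1/w(L_k)$, but the number of visits is large enough that $\sum_k 1/w(ck)=\infty$ forces its total local time to diverge.

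For part (b), strong reinforcement reverses each conclusion. First the range is finite: non-transience supplies a vertex visited infinitely often, and the crossing lemma with $\int^\infty \rmd s/w(s)<\infty$ shows every edge adjacent to a vertex of diverging local time is crossed only finitely often, confining $\X$. Next, no two adjacent vertices can both have divergent local time: if $L(v,\cdot),L(v+1,\cdot)\to\infty$ the walk oscillates $v\leftrightarrow v+1$ and leaks to a third site at each oscillation with probability of order $1/w(\ell)$, where $\ell$ is the common local time; since the number of oscillations is of order $\int_0^{\ell}w$, the leak probabilities sum to $\sum_n 1/w(\ell_n)\asymp\int^\infty \rmd\ell=\infty$ and the balanced oscillation is a.s. destroyed. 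Thus at most one vertex dominates, and the crossing lemma forbids two distinct dominating vertices (the walk would have to cross the barrier between them infinitely often), leaving a \emph{unique} $v$ with $L(v,\infty)=\infty$. Its neighbours $v\pm1$ are visited infinitely often, because from $v$ the jump probabilities to $v\pm1$ stay bounded away from $0$, yet have finite local time, since each sojourn there is of order $1/w(L_k)$ with $L_k\gtrsim ck$ and $\sum_k 1/w(ck)<\infty$; meanwhile $v\pm2$ are visited only finitely often by the crossing lemma. This is exactly localization on $\{v-1,v,v+1\}$ with a single unbounded local time.

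The main obstacle is to make the emergence of a \emph{unique winner} rigorous --- to upgrade the instability of balanced configurations into the a.s. existence of a single dominating vertex, and to control the random fluctuations of the central local time, since the lower bound $L_k\gtrsim ck$ underlying $\sum_k 1/w(L_k)<\infty$ must survive atypically slow growth. This is precisely where the regularity hypothesis \eqref{eq:assu} is used: requiring $t\mapsto w(t)^{\rho}\int_t^{\infty}\rmd s/w(s)$ to be non-increasing forces $\int_t^\infty \rmd s/w(s)\lesssim w(t)^{-\rho}$, a quantitative comparison between the tail and $w(t)^{-\rho}$ that makes the random-time summation estimates robust and lets the winner-emergence step be carried out via Borel--Cantelli / supermartingale arguments. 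I expect the bulk of the technical work to concentrate in this step, together with the bootstrap that keeps the two neighbour local times bounded while the central one grows without bound.
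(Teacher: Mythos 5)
Your outline correctly identifies non-transience as an input and correctly senses that everything should reduce to the integrability of $1/w$, but the two steps on which the whole of part (b) rests are asserted rather than proved, and in both cases the asserted mechanism does not actually work. First, the claim that ``no two adjacent vertices can both have divergent local time'' because the leak probabilities $\sum_n 1/w(\ell_n)$ diverge: that divergence only shows (via a conditional Borel--Cantelli argument) that a third site is visited infinitely often; it does not ``destroy the balanced oscillation'', i.e.\ it gives no contradiction with $L(v,\infty)=L(v+1,\infty)=\infty$. Indeed, in the linear case all local times diverge simultaneously, so any argument ruling out two adjacent divergent local times must use strong reinforcement in an essential and quantitative way. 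The paper does this by analysing the two-vertex extension through the finite-variation process $Z_t=\int_1^{\widetilde L(0,t)}\frac{\rmd u}{w(u)}-\int_1^{\widetilde L(1,t)}\frac{\rmd u}{w(u)}-\frac{\1_{\{\widetilde X_t=1\}}}{W(0,t)W(1,t)}$, proving a general non-convergence theorem (Theorem~\ref{nonconvergence}), deducing that the weight ratio cannot stay bounded away from $0$ and $\infty$ (Proposition~\ref{lem:liminf}), and finally that $Z_\infty$ is a.s.\ finite with a \emph{non-atomic} law (Proposition~\ref{bounded}), so that $\{Z_\infty=0\}=\{\widetilde L(0,\infty)\wedge\widetilde L(1,\infty)=\infty\}$ has probability zero. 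This is also where hypothesis \eqref{eq:assu} actually enters (it controls the ratio of the two tail integrals along the stopping times $\mathfrak{t}_n$), not as a robustness device for Borel--Cantelli sums.

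Second, your ``crossing lemma'' does not forbid the configuration $L(x,\infty)=L(x+2,\infty)=\infty$ with $L(x+1,\infty)<\infty$: from $x+1$ the jump towards $x+2$ is not an escape against a dominant pull, since both neighbours of $x+1$ have diverging weights, so there is no small crossing probability to sum. The paper excludes this case by an entirely different idea: the extensions of $\X$ to $\{x,x+1\}$ and to $\{x+1,x+2\}$ are built from disjoint families of exponentials, hence independent, and on the bad event the finite quantity $L(x+1,\infty)$ would have to equal two independent random variables, at least one of which is non-atomic (Lemma~\ref{nomatch}); this has probability zero. In addition, the quantitative premise of your crossing lemma --- that $L(b,\cdot)$ grows by $\Theta(1)$ between successive attempts, and hence that the $k$-th escape probability is comparable to $1/w(ck)$ --- is unjustified in general (sojourn lengths at $b$ depend on the weights of both neighbours of $b$), which also leaves the part (a) argument with an unproved core; the paper instead derives recurrence from the two-vertex martingale analysis (Proposition~\ref{VRJP2vert}) combined with the restriction principle. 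In short, the proposal is a plausible heuristic but omits precisely the three ingredients (the non-convergence theorem, the non-atomicity of $Z_\infty$, and the independence/no-match argument) that make the proof work.
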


\begin{remark}
{1. The condition that $w$ is strictly increasing is essential for the validity of Proposition~\ref{ineq.L}. This proposition is later applied to the proof of the non-transience of $\X$ (see Theorem \ref{th:notransient} below).}

{2. Our result can be easily extended  to the case when $\ell_x=c$ for all $x\in \Z$ and $w(c)>0$,  for some $c>0$.}

3. We believe, and conjecture  that Theorem \ref{th:main} (b) holds without assuming \eqref{eq:assu}. On the other hand,  {we can replace \eqref{eq:assu} by simpler criteria if $w$ has some additional regular property. For example, assuming that $w$ is differentiable in $[1, \infty)$, \eqref{eq:assu} is fullfiled if} 
$$\sup_{t\ge 1} w'(t)\int_{t}^{\infty}\frac{\rmd s}{w(s)}<\infty.$$
{Furthermore, assuming that $w\in C^2([1,\infty))$, it is easy to check by L'H\^opital's rule {that \eqref{eq:assu} holds} true if} $$\limsup_{t\to\infty}\frac{w'(t)^2}{w(t)w''(t)}<\infty.$$ 
{The class of functions $w$ that satisfy \eqref{eq:assu} is therefore quite large. In particular, it contains any power function $w(x) = x^a$ with $a>1$, and exponential functions $w(x) = {\rm e}^{a x}$ with $a>0$.}
\end{remark}

Even though we were not able to drop condition  \eqref{eq:assu} to prove localization, we were able to prove that $\X$ cannot be transient as long  as  $w$ satisfies Assumption-W.

\begin{theorem}\label{th:notransient} Suppose $w$ satisfies Assumption-W. VRJP($w$) on $\Z$ cannot be transient, that is
$$\P(\lim_{t \ti} |X_t| = \infty) =0. $$
\end{theorem}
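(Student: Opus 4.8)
The plan is to argue by contradiction and to exploit a hidden reversibility of the embedded jump chain. First I would reduce to a one-sided statement: since the law of $\X$ is invariant under the reflection $x\mapsto -x$ of $\Z$ (the graph, the unit initial weights and $w$ are all symmetric and $X_0=0$), we have $\P(X_t\to+\infty)=\P(X_t\to-\infty)$, and on $\Z$ with nearest-neighbour jumps $\{|X_t|\to\infty\}$ is the disjoint union of these two events; hence it suffices to prove $\P(X_t\to+\infty)=0$. Assume for contradiction that this probability is positive. On $\{X_t\to+\infty\}$ every vertex is visited only finitely often, so $L(v,\infty)<\infty$ for all $v$, and every $n\ge 0$ is eventually reached. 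The key observation is that, while the walk sits at a vertex $v$, only $L(v,\cdot)$ increases whereas $L(v\pm1,\cdot)$ are frozen, so the direction of the next jump is decided by two competing exponential clocks of constant rates $w(L(v+1,\cdot))$ and $w(L(v-1,\cdot))$; the probability of stepping right equals $w(L(v+1,\cdot))/\big(w(L(v-1,\cdot))+w(L(v+1,\cdot))\big)$. These are exactly the transition probabilities of a reversible nearest-neighbour walk with (time-dependent) edge conductances $c_n(t)=w(L(n,t))\,w(L(n+1,t))$.

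Guided by this, I would introduce the associated voltage function $g(v,t)=\sum_{n=0}^{v-1}1/c_n(t)$ for $v\ge 1$ (with $g(0,t)=0$) and consider the embedded chain $Y_k=X_{t_k}$ sampled at the jump times $t_k$. I claim that $g(Y_k,t_k)$ is a supermartingale as long as $Y_k\ge 1$. Indeed, computing $\E[g(Y_{k+1},t_{k+1})-g(Y_k,t_k)\mid\mathcal F_{t_k}]$ at an interior vertex $v$, the ``harmonic'' part $p\cdot(g(v+1)-g(v))+q\cdot(g(v-1)-g(v))$ cancels identically because the step probabilities $p,q$ are precisely $c_v/(c_{v-1}+c_v)$ and $c_{v-1}/(c_{v-1}+c_v)$; the only surviving contribution is non-positive and comes from the increase of $L(v,\cdot)$ during the sojourn at $v$, which can only decrease the resistances $1/c_{v-1}$ and $1/c_v$. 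Thus the reversible cancellation, together with the monotonicity of $t\mapsto L(v,t)$ and of $w$, yields the supermartingale property with drift $\le 0$.

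Next I would apply optional stopping to the stopped supermartingale. Let $\sigma$ be any time at which $X_\sigma=1$, let $\tau_0$ and $\tau_M$ be the first times after $\sigma$ at which the walk hits $0$ and reaches a far vertex $M$, and write $\rho_M=\tau_M$ for the corresponding hitting time. Since $g(0,\cdot)=0$ and $g\ge 0$ on the stopping region $\{0,\dots,M\}$, the supermartingale inequality gives $\E\big[\1_{\{\tau_M<\tau_0\}}\,g(M,\rho_M)\mid\mathcal F_\sigma\big]\le g(1,\sigma)$, where $g(1,\sigma)=1/\big(w(L(0,\sigma))\,w(L(1,\sigma))\big)\le 1$. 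Consequently, if one can produce a deterministic lower bound $g(M,\rho_M)\ge R_M$ with $R_M\to\infty$, then $\P(\tau_M<\tau_0\mid\mathcal F_\sigma)\le 1/R_M\to 0$, so from every visit to $1$ the walk almost surely returns to $0$. On $\{X_t\to+\infty\}$, however, vertex $1$ is visited only finitely often, and after the last such visit the walk can never return to $0$; chaining the return estimate over these finitely many visits yields a contradiction, whence $\P(X_t\to+\infty)=0$.

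The main obstacle is precisely the divergence of the effective resistance $g(M,\rho_M)=\sum_{n=0}^{M-1}1/\big(w(L(n,\rho_M))\,w(L(n+1,\rho_M))\big)$, i.e. the deterministic lower bound $R_M\to\infty$. This amounts to the assertion that the local times cannot build up fast enough along an escaping trajectory to make the resistance to infinity finite, and it is here — rather than in the soft reversibility/supermartingale part — that one must genuinely use the reinforcement mechanism. I expect this step to rest on the local-time inequality of Proposition~\ref{ineq.L}, whose validity requires $w$ to be \emph{strictly} increasing; it should control $L(n,\rho_M)$ (equivalently, the competition between the forward and backward clocks at each freshly reached record, where the new vertex carries local time exactly $1$ and hence rate $w(1)=1$) strongly enough to force the series to diverge, uniformly and independently of any integrability assumption on $1/w$.
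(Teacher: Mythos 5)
Your reversibility/effective-resistance setup is sound as far as it goes: during a sojourn at $v$ the neighbouring local times are frozen, the jump probabilities are indeed those of a conductance network with $c_n(t)=w(L(n,t))w(L(n+1,t))$, the harmonic part of $\E[g(Y_{k+1},t_{k+1})-g(Y_k,t_k)\mid\mathcal F_{t_k}]$ cancels exactly, and the only surviving term is $\le 0$ because the sojourn at $v$ can only increase $c_{v-1}$ and $c_v$. The optional-stopping inequality $\E[\1_{\{\tau_M<\tau_0\}}g(M,\tau_M)\mid\mathcal F_\sigma]\le g(1,\sigma)\le 1$ is also fine (all terms are bounded by $M$ since $c_n\ge w(1)^2=1$). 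This is genuinely different from the paper, which never uses an electrical-network supermartingale.

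The gap is exactly where you flagged it, and the fix you propose does not work. There is no deterministic lower bound $R_M\to\infty$ for $g(M,\tau_M)=\sum_{n=0}^{M-1}\bigl(w(L(n,\tau_M))w(L(n+1,\tau_M))\bigr)^{-1}$: the local times $L(n,\tau_M)$ accumulated before first reaching $M$ are unbounded random variables (the walk may oscillate arbitrarily long below $M$), so each summand can be arbitrarily small and, when $\int^\infty \rmd u/w(u)<\infty$, nothing a priori prevents the whole series from staying bounded along an escaping trajectory. Proposition~\ref{ineq.L} cannot close this: it is an identity for the \emph{expected} overshoot of local time in a two-vertex extension (where strict monotonicity gives $\rho(a,b)>0$), and it yields no pathwise or even distributional lower bound on resistances. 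What would actually close the gap is the stationarity/ergodicity input that constitutes the first half of the paper's own proof: under the transience hypothesis one reduces to \eqref{eq:assump}, shows via the restriction principle that $L(n,\infty)=L^{(n)}(n,\infty)$ is a fixed measurable function of the shifted i.i.d.\ clock array $\Xi_n$ (Propositions~\ref{prop:no1} and~\ref{independence}), and then Birkhoff's theorem gives $\frac1N\sum_{n\le N}\bigl(w(L(n,\infty))w(L(n+1,\infty))\bigr)^{-1}\to\E\bigl[(w(L(0,\infty))w(L(1,\infty)))^{-1}\bigr]>0$ a.s., whence $g(M,\tau_M)\ge\sum_{n<M}(w(L(n,\infty))w(L(n+1,\infty)))^{-1}\to\infty$ a.s.\ on transience and Fatou yields the contradiction. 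So your second half could replace the paper's coupling argument (Propositions~\ref{comparison}--\ref{ineq.L} and the $\E S_0>\E S_1$ comparison), but only after importing the paper's ergodic machinery; as written, the proof is incomplete at its decisive step, and the claim that strict monotonicity of $w$ is what drives the divergence of the resistance is not the right mechanism.
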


\section{Literature review and Outline of proof}\label{LitVRJP}
Linear vertex-reinforced jump process seems to play a major role in the class of reinforced processes, and has been extensively studied in recent years   \cite{DV2002, DV2004, C2006, C2009, BHS, OM2018, SZ2019, Poudevigne}. As mentioned before, linear vertex-reinforced jump process (VRJP) {can be seen as variation of} linearly edge-reinforced random walk introduced by Coppersmith and Diaconis. Moreover, VRJP  is intimately connected to  super-symmetric hyperbolic sigma model  \cite{ST1}),  also called $H^{2|2}$ model, which was introduced by Zirnbauer \cite{Zi1} to model the conductance of electrons.  Under a suitable time change, VRJP is a mixture of markovian jump processes. In fact, this is the  only nearest neighbour jump process with local dependence on the occupation times satisfying the partially exchangeable property \cite{Ze1}.

 Our methods in the present paper differ considerably from the previous ones used to study VRRW, ERRW and linear VRJP. For example, the method introduced in \cite{Tha} for ERRW does not seem to adapt  well to estimate continuous local times of VRJP. The stochastic approximation approach used in \cite{OM2018} (see also \cite{P1992}, \cite{B1997} for VRRW) is only suitable the model on complete graph with polynomial reinforcement. On the other hand, previous proofs for recurrence and transience of VRJP with linear reinforcement (see e.g. \cite{DV2002}, \cite{DV2004}, \cite{SZ2019}) are heavily based on the fact the model is a mixture of markovian jump processes and thus its limiting distribution (after a time scaling) is computable. However, VRJP with general reinforcement (i.e. the weight function is not necessary linear) is, in general, NOT a mixture of markovian jump processes. This makes it hard to find an exact density formula for the local times. {On the other hand, the relation between LERRW and linear VRJP relies on a Kendall's result for the Yule process (see \cite{ST1}). We  were not able to establish a similar relation between nonlinear VRJP and nonlinear ERRW. In fact, it seems quite difficult  to extend Kendall's result to birth processes with nonlinear intensities 
 (see \cite{AN}, p.~130 for a further discussion).}


Our paper studies the case where the reinforcement is not necessarily linear and our approach can be summarised as follows.\\
 $\bullet$  We show that when all the vertices of $\Z$ have the same initial weight, VRJP($w$) cannot be transient, as long as the reinforcement function $w$ is increasing. Our proof  is quite technical but relies on a few simple observations. The ``technicalities" arise when we extrapolate information about the behaviour of the whole process from its ``local properties".  In particular, for general reinforcement we cannot apply at all the martingale approach used  in \cite{DV2002} to prove recurrence for the linear case.
First, it is enough to focus on VRJP($w$) on the non-negative integers. Loosely speaking, if the process were transient, the total local time at $0$ would have the same distribution as the total local time at 1. We show that if they were finite (which happens in the case of transience), we would have contradiction, as $0$ has an ``advantage" over $1$. In the proof, we introduce a family of coupled VRJP on connected subsets of $\Z$ and use the so-called Restriction Principle (see \cite{DV2002, DV2004, C2009} or Section~\ref{Se:No-Transience} below) to relate the behaviour of the process on $\Z^+$ to VRJP defined on subsets. \\
 $\bullet$ In order to prove the recurrence of VRJP in weak reinforcement regime, we first investigate VRJP$(w)$ $(\widetilde{X}_t)_{t\ge0}$ on the graph with two connected vertices labeled by $0$ and $1$. In particular, we consider the c\`adl\`ag finite variation process $\mathbf{Z}=(Z_t)_{t\ge0}$ given by
 $$Z_t=\int_1^{{\widetilde L}(0,t)}\frac{\rmd u}{w(u)}-\int_1^{\widetilde L(1,t)}\frac{\rmd u}{w(u)}-\frac{\1_{\{\widetilde X_t=1\}}}{w(\widetilde L(0,t))w(\widetilde L(1,t))},$$
 where $\widetilde L(0,t), \widetilde L(1,t)$ are respectively the local times at 0 and 1 up to time $t$. {Notice that if $\widetilde L(0,\infty)\wedge \widetilde L(1,\infty)<\infty$ then $Z_{t}$ tends to either $+\infty$ or $-\infty$ as $t\to\infty$. Using a martingale decomposition for $\mathbf{Z}$ (Proposition~\ref{pr:deco}), we prove that a.s.  either $Z_{t}$ is uniformly bounded or it fluctuates from $-\infty$ to $+\infty$. This allows us to prove Proposition~\ref{VRJP2vert} stating that the local times $\widetilde L(0,t)$ and $\widetilde L(1,t)$ are both unbounded as time $t$ goes to infinity.} Combining this result with the fact of non-transience and the restriction principle, we conclude part (a) of Theorem~\ref{th:main}, i.e the recurrence of weakly VRJP.\\
 $\bullet$  We  outline the proof of localization  at strong reinforcement regime, i.e. part (b) of Theorem~\ref{th:main}. We first prove a non-convergence theorem (Theorem~\ref{nonconvergence}) which is a general result applicable to a wide class of c\`adla\`g finite variation processes, and we believe is of independent interest. Applying this non-convergence result to the process $\mathbf{Z}$ defined as above, it allows us to prove Proposition~\ref{lem:liminf} asserting that under the strong reinforcement regime, $w(\widetilde{L}(0,t))$ and $w(\widetilde{L}(1,t))$ cannot grow with the same order as $t\to \infty$. {Notice that under strong reinforcement regime, $Z_{\infty}=0$ if and only if $\widetilde{L}(0,\infty)\wedge \widetilde{L}(1,\infty)=\infty$. Using Proposition \ref{lem:liminf}, we next prove Proposition~\ref{bounded} stating that 
 $Z_t$ converges a.s. to a finite non-atomic random variable as $t\to\infty$ and thus $Z_{\infty}=0$ with probability 0. As a result, the random variable $\widetilde{L}(0,\infty)\wedge \widetilde{L}(1,\infty)$ is a.s. finite and it has a non-atomic distribution. Using the restriction principle and the result on 2 vertices, we are able to study strongly VRJP on the graph with 3 consecutive connected vertices labeled by 0, 1 and 2; in particular, we rule out the case when the local times at $0$ and $2$ are both unbounded. The fact that VRJP on $\Z$ is not transient also implies that there exists one vertex with unbounded local time. Combining the above-mentioned results and the restriction principle, we thus be able to conclude that under the assumption of Theorem~\ref{th:main}(b),  VRJP$(w)$ on $\Z$ eventually localizes on exactly three points, and only one of them accumulates unbounded local time.}
 
 Our localization result should be compare to the one obtained in \cite{OM2018} where VRJP($w$) on a complete graph is studied, and $w(x) = x^\rho$, with $\rho>1$.
It was proved in \cite{OM2018} that there exists exactly one (random) vertex $v^*$ whose local time is unbounded, and all the vertices are visited infinitely often. Moreover after a random time, VRJP performs jumps only from or to $v^*$.

Our method differs substantially from the ones used in \cite{OM2018}, where a stochastic approximation approach is used. We could not implement that approach in our settings, and at same time, with our methods we are able to cover a larger class of reinforcement.

\section{Proof of Theorem~\ref{th:notransient}: non-transience of VRJP on $\mathbb{Z}$}
\label{Se:No-Transience}

In this section we  prove that if $w$ satisfies Assumption-W, then with probability 1, VRJP($w$) $\X$ on $\Z$ cannot be transient, i.e. $\X$ visits each vertex in $\Z$ finitely often. Our proofs rely on a specific construction of VRJP, which we call canonical process, which in turn allows to build a family of coupled VRJP on subsets of $\Z$.

\subsection{Canonical process(es)} We introduce a family of canonical VRJPs generated using {\bf the same} sequence of i.i.d. exponentials.    Fix a collection of non-negative initial local times $\ell=(\ell_x)_{x\in\Z}$ and a measurable weight function $w:[0,\infty)\to [0,\infty)$. To any ordered pair of consecutive integers, say $(i, j)\in \Z^2$, where $|j-i| =1$, attach a Poisson process $\Pcal(i,j)$ with rate one. We assume that the Poisson processes are independent. Denote by $(\chi^{\ssup{i,j}}_n)_{n \in \N}$ the inter-arrival times for the process  $\Pcal(i,j)$, i.e. $(\chi^{\ssup{i,j}}_n)_{n \in \N}$ are i.i.d. exponentials with mean one. We first construct the \lq skeleton\rq\ of the VRJP($\ell,w$) on $\Z$, i.e. a discrete time process which describes the  jumps of the VRJP. Let  $\tau_0 = 0$. {On the event $\{X_0 =v\}$ with $v \in \Z$, set
$$
\tau_1 = \min\left\{\frac 1{w(\ell_{v+1})} \chi^{\ssup {v,v+1}}_1, \frac 1{w(\ell_{v-1})}\chi^{\ssup {v,v-1}}_1\right\},
$$ 
and $L(v, \tau_1) = \ell_v +\tau_1$, and for $x \neq v$, let $L(x, \tau_1) = \ell_x$.} Moreover set 
$$
{X_{\tau_1} = \arg\min_{j \in \{v-1, v+1\}} \frac 1{w(\ell_j)} \chi^{\ssup {v,j}}_1.}
$$
In this definition we convey  that $a/0 = \infty$ when $a>0$. Roughly speaking, this means that all vertices $x$ with initial local time $\ell_x =0$  cannot be visited. Suppose we defined $(\tau_j, X_{\tau_j},  (L(x, \tau_j))_{x \in \Z})$ for all $j\le n$. On the event $\{X_{\tau_n} = i\}$,  {for $j\in \{i-1,i+1\}$ let $$\gamma_j =1+\sum_{k=1}^n \1_{\{(X_{\tau_{k-1}},X_{\tau_k})= (i,j)\}}$$} and  let 
 \begin{eqnarray*}
 \tau_{n+1} &=& \tau_n+ \min\left\{\frac 1{w(L(i-1, \tau_n))}\chi^{\ssup {i,i-1}}_{{\gamma_{i-1}}}, \frac 1{w(L(i+1, \tau_n))}\chi^{\ssup {i,i+1}}_{{\gamma_{i+1}}}\right\},\\
 L(i, \tau_{n+1}) &=& L(i, \tau_n) +  \tau_{n+1} - \tau_n, \quad \mbox{and for $x \neq i$ we set }  L(x,  \tau_{n+1}) = L(x, \tau_n),\\
 X_{\tau_{n+1}} &=&  \arg\min_{j \in \{i-1, i + 1\}}\frac 1{w(L(j, \tau_n))}\chi^{\ssup {i,j}}_{{\gamma_j}}.
 \end{eqnarray*} 
\begin{proposition}[Non-explosion]\label{non-explosion} Assume that {Assumption W} is fulfilled, $\ell_x>0$ for all $x\in\Z$ and
$$ {\sum_{x=1}^{\infty}\frac{1}{w(\ell_x)}=\sum_{x=1 }^{\infty}\frac{1}{w(\ell_{-x})}=\infty}.$$ 
Then VRJP($\ell,w$) $\X$ has finitely many jumps on any bounded time interval, i.e. 
 $\lim_{n \ti} \tau_n = \infty$, {almost surely.}
 \end{proposition}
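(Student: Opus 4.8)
The plan is to rule out explosion by isolating the two distinct mechanisms that could pack infinitely many jumps into a finite time window: the range of $\X$ could stay bounded while infinitely many jumps accumulate, or $\X$ could escape to $+\infty$ or $-\infty$ in finite time. Writing $\tau_\infty := \lim_n \tau_n$, I would show each scenario forces $\tau_\infty = \infty$, and conclude $\P(\tau_\infty < \infty) = 0$.

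First I would handle the bounded-range case. Suppose on some event the visited vertices all lie in a finite set $S$ while $\tau_\infty = T < \infty$. Then every local time obeys $L(v,\tau_n) \le \max_{v\in S}\ell_v + T =: B$, so all relevant jump rates are at most $M := w(B) < \infty$. Hence each holding time, being the value realizing the minimum that defines the corresponding jump, satisfies
\[ \tau_{n+1}-\tau_n \;=\; \frac{1}{w(L(X_{\tau_{n+1}},\tau_n))}\,\chi^{\ssup{X_{\tau_n},\,X_{\tau_{n+1}}}}_{\gamma} \;\ge\; \frac{1}{M}\,\chi^{\ssup{X_{\tau_n},\,X_{\tau_{n+1}}}}_{\gamma}. \]
Since only finitely many directed edges have both endpoints in $S$, some edge $(a,b)$ is traversed infinitely often, and its successive traversals consume the fresh exponentials $\chi^{\ssup{a,b}}_1,\chi^{\ssup{a,b}}_2,\dots$ in order; thus $T \ge M^{-1}\sum_{m\ge1}\chi^{\ssup{a,b}}_m = \infty$ almost surely, a contradiction. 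Formally one intersects over the countably many directed edges the a.s. event that the associated i.i.d. $\mathrm{Exp}(1)$ series diverges.

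Next I would exclude escape to infinity via a first-passage lower bound. Assume $X_0=v$ and let $T_x$ be the first hitting time of $x$. For $k\ge1$, the vertex $v+k$ can be reached for the first time only by a jump from $v+k-1$, and at that jump $v+k$ is still unvisited, so $\gamma_{v+k}=1$ and the rate toward it equals $w(L(v+k,\cdot)) = w(\ell_{v+k})$; consequently the holding time of the final visit to $v+k-1$ preceding $T_{v+k}$ is exactly $\chi^{\ssup{v+k-1,\,v+k}}_1/w(\ell_{v+k})$ (the rightward clock realizes the minimum). Setting $\xi_k := \chi^{\ssup{v+k-1,\,v+k}}_1$, which are i.i.d.\ $\mathrm{Exp}(1)$ as they lie on distinct directed edges, and summing the disjoint local times at $v,\dots,v+n-1$,
\[ T_{v+n} \;\ge\; \sum_{k=1}^{n}\bigl(L(v+k-1,T_{v+n})-\ell_{v+k-1}\bigr) \;\ge\; \sum_{k=1}^{n}\frac{\xi_k}{w(\ell_{v+k})}. \]
Because $\sum_{k\ge1}1/w(\ell_{v+k})=\infty$ and $\E[\min(\xi_k/w(\ell_{v+k}),1)]\ge 1/(1+w(\ell_{v+k}))$, the criterion that a series of independent nonnegative terms diverges a.s.\ whenever $\sum_k \E[\min(Y_k,1)]=\infty$ gives $\sum_k \xi_k/w(\ell_{v+k})=\infty$ a.s., hence $T_{v+n}\to\infty$. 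The symmetric estimate built from $\sum_x 1/w(\ell_{-x})=\infty$ controls the left side (the convention $a/0=\infty$ consistently handles vertices with $w(\ell_x)=0$, which are simply unreachable). Thus on $\{\tau_\infty<\infty\}$ the range cannot be unbounded, for otherwise the process would reach some $v\pm n$ with $n$ arbitrarily large before time $\tau_\infty$, contradicting $T_{v\pm n}\to\infty$. Combining the two cases yields $\P(\tau_\infty<\infty)=0$.

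The main obstacle I anticipate is the careful bookkeeping inside the canonical construction: one must verify rigorously that the first jump from $v+k-1$ to $v+k$ carries index $\gamma_{v+k}=1$ and frozen rate $w(\ell_{v+k})$, that $v+k$ is first reached only from $v+k-1$ on $\Z$, and that the duration of that final visit is a genuine lower bound for the total local time at $v+k-1$, so that the telescoping inequality for $T_{v+n}$ is valid. The bounded-range pigeonhole step and the divergence of $\sum \xi_k/w(\ell_{v+k})$ are then routine once these identities are in place.
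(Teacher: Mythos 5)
Your proof is correct and follows essentially the same strategy as the paper's: you rule out reaching infinitely many vertices in finite time via the a.s.\ divergence of $\sum_{k}\chi^{\ssup{v+k-1,v+k}}_{1}/w(\ell_{v+k})$ (a Kolmogorov three-series/truncation argument, exactly as in the paper), and you rule out accumulation of jumps on a bounded set via the a.s.\ divergence of $\sum_{m}\chi^{\ssup{a,b}}_{m}$ along an infinitely traversed directed edge. The only cosmetic difference is that the paper derives the second contradiction from the finiteness of the local time at a single infinitely visited vertex, whereas you derive it from the finiteness of the total elapsed time $\tau_\infty$.
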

 \begin{proof}
We reason by contradiction. Let $A = \{ \lim_{n \ti} \tau_n < \infty\}$ and assume that $\P(A) > 0$. We first prove that on $A$ we must have that $L(x, \infty)<\infty$ for all $x \in \Z$.  For simplicity, define $ \tau_\infty := \lim_{n \ti} \tau_n$.  By the definition of $(L(x, \tau_n))_{n \in \N}$ we have
\begin{equation}
 \label{eq:int1}
  L(x, \tau_\infty) < \ell_x+ \tau_\infty<\infty  \qquad \mbox{on $A$}, \qquad \forall x \in \Z.
  \end{equation}
{Assume w.l.o.g. that $X_0=0$}. On the event $\{\lim_{n \ti} X_{\tau_n} = \infty\}$, we have that
{$$ 
\tau_\infty \ge \sum_{x=0}^\infty \frac{1}{w(\ell_{x+1})} \chi^{\ssup{x, x+1}}_{1}.
$$
Note that $(\chi^{\ssup{x, x+1}}_{1})_{x\in \Z^+}$ is a sequence of independent exponential random variables with mean 1.}

If $\inf_{x\in \N} w(\ell_x)=0$, it immediately follows from the above inequality that $\tau_{\infty}=\infty$. On the other hand, if $w^*=\inf_{x\in \N} w(\ell_x)>0$, we have
\begin{align*}
\sum_{x=0}^\infty \frac{1}{w(\ell_{x+1})} \E\left(\chi^{\ssup{x, x+1}}_{1}\1_{\left\{\chi^{\ssup{x, x+1}}_{1} \le   w(\ell_{x+1})\right\}}\right)
&=\sum_{x=0}^\infty \frac{1}{w(\ell_{x+1})} \left(1-(w(\ell_{x+1})+1)e^{-w(\ell_{x+1})}\right)\\
& \ge K \sum_{x=0}^\infty \frac{1}{w(\ell_{x+1})}=\infty,
\end{align*}
where $K:=\inf_{u\in[w^*,\infty)}\left(1-(u+1)e^{-u}\right)>0$.
By applying Kolmogorov's Three-Series Theorem, we must have that a.s. $\sum_{x\in \Z^+}\frac{1}{w(\ell_{x+1})} \chi^{\ssup{x, x+1}}_{1} =\infty.$
Hence, on the event 
$\{\lim_{n \ti} X_{\tau_n} = \infty\}$ we have $\tau_{\infty}=\infty$ a.s., which contradicts the assumption  on $A$. We thus conclude that  the  intersection $\{\lim_{n \ti} X_{\tau_n} = \infty\} \cap A$ has measure zero.   A similar reasoning (or using symmetry) yields $\{\lim_{n \ti} X_{\tau_n} = -\infty\} \cap A$ has measure zero.

Hence on the event $A$, there exists a vertex $x \in \Z$ such that $x$ is visited infinitely often by $(X_{\tau_n})_{n \in \Z^+}.$ Notice that at least one of the neighbours of $x$ must also be visited infinitely often. Call this neighbour $y \in \{x-1, x+1\}$. Using monotonicity of $w$ and the first inequality in \eqref{eq:int1} applied to $L(y, \tau_\infty)$, we have that
$$
L(x, \tau_\infty) \ge \frac 1{w(\ell_y + \tau_{\infty})} \sum_{k=1}^\infty \chi^{\ssup{x,y}}_k = \infty, \qquad \mbox{a.s., on $A$}
$$
which contradicts~\eqref{eq:int1}.  
\end{proof}

Define the continuous time process $\X = (X_t)_{t \in \R^+}$  as follows
$$
X_t = \sum_{n = 0}^\infty X_{\tau_n} \1_{\{\tau_n \le t < \tau_{n+1}\}}.
$$
Given the properties of the exponentials, we have immediately the following.
\begin{proposition}  {For each fixed $(\ell, w)$ and fixed distribution of the starting point, the canonical process $\X$  is a VRJP($\ell, w$) on $\Z$.}
\end{proposition}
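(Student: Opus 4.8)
The plan is to verify directly that the c\`adl\`ag process $\X$ produced by the canonical construction satisfies the two defining properties of VRJP($\ell,w$). The first property is immediate: by construction $\X$ is piecewise constant, changes value only at the times $\tau_n$, and each such change is a jump to a nearest neighbour; moreover, under the hypotheses of Proposition~\ref{non-explosion} (in particular whenever $\ell_x\equiv 1$, since then $\sum_x 1/w(\ell_x)=\sum_x 1=\infty$) one has $\tau_n\to\infty$, so that $X_t$ is well defined for every $t\ge 0$, after conditioning on the starting vertex $\{X_0=v\}$. It therefore remains to check the infinitesimal jump-rate description \eqref{eq:smint}.

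The heart of the matter is the following freshness claim, to be proved by induction on $n$: conditionally on $\mathcal{F}_{\tau_n}\cap\{X_{\tau_n}=i\}$, the two waiting times toward the neighbours of $i$ are independent exponentials of rates $w(L(i-1,\tau_n))$ and $w(L(i+1,\tau_n))$. Since the indices $\gamma_{i-1},\gamma_{i+1}$ are $\mathcal{F}_{\tau_n}$-measurable and the Poisson processes attached to distinct directed edges are independent, the only delicate case is a clock that is \emph{reused}---an exponential already examined during an earlier sojourn at $i$ that did not then trigger a jump. Here I would invoke the memorylessness of the exponential: conditionally on the history, the residual of such a clock is again an exponential of mean one, so that, measured from the current visit, the waiting time toward that neighbour has the asserted exponential law. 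The one-dimensional geometry of $\Z$ is exactly what makes this consistent: between a departure from $i$ and the next return the walk stays on a single side of $i$, so the local time at the opposite neighbour, and hence the rate $w(L(\cdot,\tau_n))$ carried by the reused clock, is unchanged from the instant the clock was last examined.

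Granting the freshness claim, the law of a single sojourn follows from the elementary calculus of competing exponentials: given $\mathcal{F}_{\tau_n}\cap\{X_{\tau_n}=i\}$, the holding time $\tau_{n+1}-\tau_n$ is exponential of rate $w(L(i-1,\tau_n))+w(L(i+1,\tau_n))$ and, independently of it, $X_{\tau_{n+1}}=j$ with probability $w(L(j,\tau_n))/\bigl(w(L(i-1,\tau_n))+w(L(i+1,\tau_n))\bigr)$. Because the two neighbours' local times stay frozen throughout $[\tau_n,\tau_{n+1})$, these rates are constant in $t$ along each sojourn; a first-order expansion then yields, for $t$ in the sojourn and $v$ a neighbour of $X_t$, that the conditional probability of exactly one jump in $(t,t+h]$, landing at $v$, equals $w(L(v,t))\,h+o(h)$, while non-explosion forces the probability of two or more jumps in $(t,t+h]$ to be $o(h)$. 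This is precisely \eqref{eq:smint}, whence $\X$ is a VRJP($\ell,w$).

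The step I expect to be the main obstacle is the freshness claim, and within it the bookkeeping for reused clocks. The point is that one must control the \emph{conditional} law given $\mathcal{F}_{\tau_n}$---showing that the pertinent residual is still exponential of mean one after conditioning on the whole past---rather than merely its marginal law. Setting up the induction so that the filtration $(\mathcal{F}_{\tau_n})$ and the stopping-time structure of the indices $\gamma_{i\pm1}$ interact cleanly with memorylessness, and invoking the frozen-rate observation at exactly the right place, is where the care is required.
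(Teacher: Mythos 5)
You have correctly identified the one genuinely delicate point --- the clocks that are re-examined at a later sojourn --- and the two facts needed to handle it: memorylessness of the exponential, and the observation that on $\Z$ the local time of the ``losing'' neighbour is frozen between consecutive visits to $i$, so the rate attached to the reused clock has not changed. The paper itself offers no argument (the proposition is stated as immediate from ``the properties of the exponentials''), so your verification is the substantive content; the competing-exponentials computation and the passage from the per-sojourn description to the infinitesimal one are fine.

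There is, however, one point you pass over too quickly, and it is exactly the one you flagged as the main obstacle. As displayed in the paper, the waiting time from $\tau_n$ toward a neighbour $j$ of $i=X_{\tau_n}$ is the \emph{full} rescaled variable $\frac{1}{w(L(j,\tau_n))}\chi^{(i,j)}_{\gamma_j}$, not its residual. If the previous sojourn at $i$ had length $s$ and ended with a jump to the other neighbour, the path up to $\tau_n$ reveals that $\chi^{(i,j)}_{\gamma_j}>w(L(j,\tau_n))\,s$ (the rate being unchanged, by your frozen-local-time observation), so conditionally on $\mathcal{F}_{\tau_n}$ this full waiting time is distributed as $s+\mathrm{Exp}\bigl(w(L(j,\tau_n))\bigr)$ rather than $\mathrm{Exp}\bigl(w(L(j,\tau_n))\bigr)$; in particular the conditional probability of a jump to $j$ within $(\tau_n,\tau_n+h]$ vanishes for $h<s$, and the freshness claim fails. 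Memorylessness applies to the \emph{residual} $\frac{1}{w(L(j,\tau_n))}\chi^{(i,j)}_{\gamma_j}-s$, and your sentence ``the residual \dots is again an exponential \dots so that \dots the waiting time toward that neighbour has the asserted exponential law'' silently substitutes that residual for the full clock prescribed by the formula. To close the argument you must make the substitution explicit: either read the construction as declaring a jump $i\to j$ at the instant the accumulated hazard $\int_0^t\1_{\{X_u=i\}}w(L(j,u))\,\rmd u$ reaches the next point of $\Pcal(i,j)$ (equivalently, reuse only the residual of a losing clock), or draw a fresh exponential at every sojourn. Under either reading your induction, combined with the frozen-rate observation, does yield the freshness claim and hence the proposition; for the formula read literally, it does not.
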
 

{Denote by $\Theta$ the collection of all canonical VRJPs generated using the same sequence of i.i.d. exponentials $\big(\chi_{n}^{(i,j)}: (i,j)\in\Z^2, |i-j|=1, n\in \N\big)$.} 

\subsection{Extensions}\label{subsec:ex}
The canonical process $\X$ defined above allows us to generate {a family of coupled processes}, each indexed by connected subsets of $\Z$, giving local information about $\X$. We call these processes extensions.
\begin{definition}[Extensions]
{Let $\X$ be a canonical $VRJP(\ell,w)$ starting from 0.} Consider a connected  non-empty subset  $B$ of the graph $\Z$. 
Set 
$$\ell^{\ssup B}_x = \begin{cases}
&\ell_x \qquad \mbox{if $x \in B$}\\
&0 \qquad \mbox{ if $x \notin B$}.
\end{cases}
$$
Let $\ell^{\ssup B} = (\ell^{\ssup B}_x)_{x \in \Z}$.  {We construct $VRJP(\ell^{\ssup B} , w)$  $\X^{\ssup B}$ defined on $B$, {called the \textbf{extension} of} $\X$ on $B$, using the family of canonical processes $\Theta$. In fact, $\X^{\ssup B}$ is a canonical version with initial local times $(\ell^{\ssup B}_x)_x$ and its starting point $X_0^{\ssup B}$  chosen depending on $B$.  We distinguish three cases, a) $0 \in B$, b) $B \subseteq \N=\Z^+\setminus\{0\}$, and c) $B \subseteq -\N$.\\
\noindent{\bf  Case a)}  $X_0^{\ssup B} =0$.\\
\noindent{\bf  Case b)}  Let $b_- = \min\{i \colon i \in B\}$. We have $X_0^{\ssup B} =b_-$.\\
\noindent{\bf  Case c)} Let  $b_+ = \max\{i \colon i \in B\}$. We have $X_0^{\ssup B} =b_+$.}
\end{definition}

\begin{remark}
{Let $B$ be a nontrivial connected subset of $\Z$. For simplicity, we assume that $\ell_x>0$ for all $x\in B$. We assume additionally that $w$ satisfies Assumption $W$ and either $B$ is finite or $\sum_{x\in B}1/w(\ell_x)=\infty$. By using the similar argument as in the proof of Proposition~\ref{non-explosion}, we have that the extension $\X^{(B)}$ is also non-explosive. }
\end{remark}
 
Denote by $\left\{ L^{(B)}(i,t), i\in B,t\in[0,\infty) \right\}$ the process of local times of $\X^{(B)}$.  Set \begin{equation*}
\delta^B(u)=\inf\left\{t\ge 0: \int_0^t\1_{\{X_s\in B \}}\rmd s=u \right\} \quad \text{and}\quad T_B=\sup\{t: \delta^B(t)<\infty\}.
\end{equation*}
The process $(X_{\delta^B(t)})_{t\in [0,\infty)}$ is called the \textbf{restriction} of $\X$ on $B$. By the \textbf{restriction principle} (see e.g. \cite{DV2002}, \cite{DV2004} and \cite{C2009}), the restriction and the extension of $\X$ on $B$ must coincide up to the last exit time from $B$, i.e.  $$X_{\delta^B(t)}=X^{(B)}_t \quad \text{a.s. for all}\quad 0\le t\le T_B.$$
{As a result, we have
$$L(i,\infty)\le L^{(B)}(i,\infty) \quad \text{a.s. for all}\quad i\in B.$$
In particular, a.s. on the event $\{T_B=\infty\}$ (i.e. $\X$ spends an unbounded amount of time in $B$),  $$L(i,\infty)=L^{(B)}(i,\infty)\ \quad  \text{for all}\quad i\in B.$$
On the event $\{T_B<\infty\}$, there exists a vertex $i\in B$ such that $L(i,\infty)<L^{(B)}(i,\infty)$.}
\vskip 2em

{Now, let $\X$ be a canonical VRJP$(\ell,w)$ starting from $0$ such that $\ell_0$ and $\ell_1$ are positive. Let $\widetilde{\X}=(\widetilde{X}_t)_{t\in\R^+}$ be the extension of $\X$ on $\{0, 1\}$}. Denote by $\big(\widetilde{L}(0,t), \widetilde{L}(1,t)\big)_{t \in [0, \infty)}$ and  $(\widetilde{\tau}_j)_{j \in \Z^+}$ the process of local times of $\widetilde{\X}$ and its jump times respectively. 

{Let $\mathbf X^*=(X^*_t)_{t\in\R^+}$ be the extension of another canonical VRJP$(\ell^*,w)$ on $\{0, 1\}$ starting from 0 such that $\ell_0^*= \ell_0$ and $\ell_1^*\ge \ell_1$.} Let $(L^*(0,t),L^*(1,t))_{t \in [0, \infty)}$ and $\{\tau^*_{n}\}_{n \in \Z^+}$ stand respectively for the process of local times and the jump times of ${\X}^*$. 

 \begin{proposition}\label{comparison} Under Assumption W, we have that
  \begin{enumerate}
 \item[(i)] $\widetilde{L}(0,\widetilde{\tau}_n) \ge L^*(0,\tau^*_n)$ and   $ \widetilde{L}(1,\widetilde{\tau}_n) \le L^*(1,\tau^*_n)$ {for} all $n \in \Z^+$. Moreover, {the strict inequalities occur} when $\ell_1^*>\ell_1.$
 \item[(ii)] {Suppose that $\ell_i \equiv 1$ for all $i\in \mathbb{Z}$; $\ell_1^*=1+A$ where $A$ is an exponential random variable independent of  the sequences $(\chi^{(0,1)}_n)_{n\in \N}$ and $(\chi^{(1,0)}_n)_{n\in \N}$ with $\E[A] = 1$. Then $\big(L^*(1, \tau_{n}^*), L^*(0, \tau_{n}^*) \big)  \sim \big ( \widetilde{L}(0, \widetilde{\tau}_{n+1}), \widetilde{L}(1, \widetilde{\tau}_{n+1})\big)$ for all $n\in \Z^+.$ In particular, the process
$(L^*(0,t), (L^*(1,t))_{t\in [0, \infty)}$ is distributed as  
$(\widetilde{L}(1, t), \widetilde{L}(0, t))_{t \in [\widetilde{\tau}_{1}, \infty)}$.}
\end{enumerate} 
  \end{proposition}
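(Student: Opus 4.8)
\emph{Plan.} I would work entirely at the level of the embedded jump chains, exploiting that both $\widetilde{\X}$ and $\X^*$ are built from the \emph{same} i.i.d.\ exponentials $(\chi^{(0,1)}_n)_{n\ge1},(\chi^{(1,0)}_n)_{n\ge1}$. Write $a_n=\widetilde L(0,\widetilde\tau_n)$, $b_n=\widetilde L(1,\widetilde\tau_n)$, $a^*_n=L^*(0,\tau^*_n)$, $b^*_n=L^*(1,\tau^*_n)$. Since the restriction to $\{0,1\}$ forces the walk to alternate $0\to1\to0\to\cdots$ from $0$, the canonical construction specialized to $B=\{0,1\}$ gives, with the convention that at an even index the walk sits at $0$ and at an odd index at $1$, the recursions
\begin{align*}
a_{2k+1}&=a_{2k}+\frac{\chi^{(0,1)}_{k+1}}{w(b_{2k})}, & b_{2k+1}&=b_{2k},\\
b_{2k+2}&=b_{2k+1}+\frac{\chi^{(1,0)}_{k+1}}{w(a_{2k+1})}, & a_{2k+2}&=a_{2k+1},
\end{align*}
starting from $a_0=\ell_0$, $b_0=\ell_1$, and the identical recursions for $(a^*_n,b^*_n)$, only with $a^*_0=\ell_0=a_0$ and $b^*_0=\ell^*_1\ge\ell_1=b_0$.

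For part (i) I would induct on $n$. The base case is the hypothesis $a_0=a^*_0$, $b_0\le b^*_0$. For the inductive step, if $a_n\ge a^*_n$ and $b_n\le b^*_n$, then monotonicity of $w$ gives $w(b_{2k})\le w(b^*_{2k})$, hence $\chi^{(0,1)}_{k+1}/w(b_{2k})\ge\chi^{(0,1)}_{k+1}/w(b^*_{2k})$ and $a_{2k+1}\ge a^*_{2k+1}$, while $b_{2k+1}=b_{2k}\le b^*_{2k}=b^*_{2k+1}$; the odd step is symmetric, using $w(a_{2k+1})\ge w(a^*_{2k+1})$ to get $b_{2k+2}\le b^*_{2k+2}$. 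This yields $a_n\ge a^*_n$, $b_n\le b^*_n$ for all $n$. If $\ell^*_1>\ell_1$ then $b_0<b^*_0$ strictly; since the $\chi$'s are a.s.\ positive and $w$ is strictly increasing, each inequality above becomes strict and strictness propagates, giving the strict versions.

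For part (ii) the key observation is a \emph{shift-and-swap} identity. Because $w(1)=1$, $\widetilde{\X}$ leaves $0$ at time $\widetilde\tau_1=\chi^{(0,1)}_1$ and reaches $1$ with $(a_1,b_1)=(1+\chi^{(0,1)}_1,\,1)$, whereas $\X^*$ starts at $0$ with $(a^*_0,b^*_0)=(1,\,1+A)$. Relabelling the vertices $0\leftrightarrow1$, the state of $\widetilde{\X}$ at its first jump matches the initial state of $\X^*$ once we identify the independent standard exponential $\chi^{(0,1)}_1$ with $A$ (both have $\E=1$). I would make this precise through the relabelling of the driving randomness
\begin{equation*}
A\ \longleftrightarrow\ \chi^{(0,1)}_1,\qquad
(\chi^{(0,1)}_n)^*\ \longleftrightarrow\ \chi^{(1,0)}_n,\qquad
(\chi^{(1,0)}_n)^*\ \longleftrightarrow\ \chi^{(0,1)}_{n+1}\qquad(n\ge1),
\end{equation*}
where starred symbols denote the exponentials driving $\X^*$. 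Setting $\alpha_n:=b_{n+1}=\widetilde L(1,\widetilde\tau_{n+1})$ and $\beta_n:=a_{n+1}=\widetilde L(0,\widetilde\tau_{n+1})$ and advancing the $\widetilde{\X}$-recursion by one step, one checks term by term that $(\alpha_n,\beta_n)$ satisfies \emph{exactly} the $(a^*_n,b^*_n)$-recursion under the above substitution, with $\alpha_0=1=a^*_0$ and $\beta_0=1+\chi^{(0,1)}_1\leftrightarrow 1+A=b^*_0$.

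Finally I would note that the displayed relabelling is a bijection of the index set of the driving exponentials: it maps the i.i.d.\ standard exponential family $\{A\}\cup\{(\chi^{(0,1)}_n)^*,(\chi^{(1,0)}_n)^*:n\ge1\}$ onto $\{\chi^{(0,1)}_n,\chi^{(1,0)}_n:n\ge1\}$ without repetition (here one uses that $A$ is an independent standard exponential playing the role of the previously unused $\chi^{(0,1)}_1$). Hence the two equidistributed families drive the same measurable recursion, so $(\alpha_n,\beta_n)_n\eqD(a^*_n,b^*_n)_n$ jointly; unwinding the notation this is $(\widetilde L(1,\widetilde\tau_{n+1}),\widetilde L(0,\widetilde\tau_{n+1}))\eqD(L^*(0,\tau^*_n),L^*(1,\tau^*_n))$, which is the assertion of (ii) after exchanging coordinates. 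The continuous-time statement then follows because between consecutive jumps the local times grow deterministically (linearly at the current vertex, constant at the other), so the law of the whole local-time path is determined by the law of the jump chain together with the jump times; matching the latter under the same relabelling and shifting time by $\widetilde\tau_1$ yields $(L^*(0,t),L^*(1,t))_{t\ge0}\eqD(\widetilde L(1,t),\widetilde L(0,t))_{t\ge\widetilde\tau_1}$. The main obstacle is the combinatorial bookkeeping in part (ii): aligning the indices so that the shifted-and-swapped $\widetilde{\X}$-chain reproduces the $\X^*$-recursion exactly, and checking that the induced map on the exponentials is a genuine measure-preserving bijection; everything else is a routine induction and deterministic interpolation.
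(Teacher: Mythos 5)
Your proposal is correct and follows essentially the same route as the paper: part (i) is the identical induction along the alternating jump chain, using the shared exponentials and the (strict) monotonicity of $w$, and part (ii) rests on the same substitution $A\leftrightarrow\chi^{(0,1)}_1$, $\chi^{(0,1)}_n\leftrightarrow\chi^{(1,0)}_n$, $\chi^{(1,0)}_n\leftrightarrow\chi^{(0,1)}_{n+1}$ that the paper verifies step by step by induction on the distributional identities $L^*_{2n-1}\sim\widetilde L_{2n}$ and $L^*_{2n}\sim\widetilde L_{2n+1}$. Your packaging of (ii) as a single measure-preserving bijection of the driving i.i.d.\ exponentials is only a presentational variant of that induction, and your recursions and index bookkeeping check out.
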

\begin{proof}
In what follows, it is useful to keep in mind that $L^*(0, \tau_{2n-1}^*) = L^*(0, \tau_{2n}^*)$, $L^*(1, \tau_{2n}^*) = L^*(1, \tau_{2n+1}^*)$ and $\widetilde{L}(0, \widetilde{\tau}_{2n-1}) = \widetilde{L}(0, \widetilde{\tau}_{2n})$, $\widetilde{L}(1, \widetilde{\tau}_{2n}) = \widetilde{L}(1, \widetilde{\tau}_{2n+1})$ for all $n\in \N$.

To prove (i), it is sufficient to show that for all $k\in\Z^+$,
\begin{equation}\label{ineqL} 
L^*(1,\tau^*_{2k}){\ \ge \ }\widetilde{L}(1,\widetilde{\tau}_{2k})
\quad \text{and} \quad 
L^*(0,\tau^*_{2k+1}){\ \le \ } \widetilde{L}(0,\widetilde{\tau}_{2k+1}).
\end{equation}
For $k=0$, we have that
\begin{align*}& L^*(1, \tau_0^*)   =\ell_1^* {\ \ge \ }\ell_1 = \widetilde{L}(1,  \widetilde\tau_0),\\
 &L^*(0, \tau_{1}^*)  = \ell_0+\frac 1{w(\ell_1^*)} \chi^{\ssup{0,1}}_1 {\ \le \ } \ell_0 + \frac{1}{w(\ell_1)}\chi^{\ssup{0,1}}_1 = \widetilde{L}(0, \widetilde{\tau}_1).
\end{align*}
Suppose \eqref{ineqL} holds for all $k \in \{0,1,\dots,n-1\}$. We have that
\begin{align*}
   L^*(1, \tau_{2n }^*)   =  L^*(1, \tau_{2n-2}^*) + \frac 1{w(L^*(0, {\tau}_{2n-1}^*))} \chi^{\ssup{1,0}}_{n}
  {\ \ge \ } \widetilde{L}(1, \widetilde{\tau}_{2n-2}) +  \frac 1{w(\widetilde{L}(0, \widetilde{\tau}_{2n-1}))} \chi^{\ssup{1,0}}_{n}
  = \widetilde{L}(1, \widetilde{\tau}_{2n}),\\
  L^*(0, \tau_{2n +1}^*)     =  L^*(0, \tau_{2n-1}^*) + \frac 1{w(L^*(1, \tau_{2n}^*))} \chi^{\ssup{0,1}}_{n+1}{\ \le \ } \widetilde{L}(0, \widetilde{\tau}_{2n-1}) +  \frac 1{w(\widetilde{L}(1, \tau_{2n}))} \chi^{\ssup{0,1}}_{n+1}
 = \widetilde{L}(0, \widetilde{\tau}_{2n+1}).
\end{align*}
where the inequalities are a direct consequence of the induction rule. {It is also clear that the strict inequalities occur when $\ell_1^*>\ell_1.$} This  finishes the proof of (i). 

Next, we prove (ii). For simplicity, set $L^*_k=\big(L^*(1, \tau_{k}^*), L^*(0, \tau_{k}^*)\big)$ and $\widetilde{L}_k=\big(\widetilde{L}(0, \widetilde{\tau}_{k}), \widetilde{L}(1, \widetilde{\tau}_{k})\big)$, for all $k\in \Z^+$. 
We prove first by induction that,
 \begin{eqnarray} \label{idenL1}L^*_{2n-1}=\big( L^*(1, \tau_{2n-2}^*),L^*(0, \tau_{2n-1}^*)\big)  \sim \big ( \widetilde{L}(0, \widetilde{\tau}_{2n-1}), \widetilde{L}(1, \widetilde{\tau}_{2n})\big)=\widetilde{L}_{2n},\quad \forall n\in\N,\\
 \label{idenL2}
L^*_{2n}= \big( L^*(1, \tau_{2n}^*),L^*(0, \tau_{2n-1}^*)\big)  \sim \big ( \widetilde{L}(0, \widetilde{\tau}_{2n+1}), \widetilde{L}(1, \widetilde{\tau}_{2n})\big)=\widetilde{L}_{2n+1}, \quad \forall n\in \mathbb{Z}^+.
  \end{eqnarray}
We have
\begin{eqnarray*}
& L^*_0=(1+A,1)\sim \left(1+\chi^{\ssup{0,1}}_1,1 \right),\\
& L^*_1 = \left(1+A  ,1+\frac 1{w(1+A)} \chi^{\ssup{0,1}}_1 \right) \sim\left( 1+\chi^{\ssup{0,1}}_1,   1 +\frac 1{w(1 +\chi^{\ssup{0,1}}_1)} \chi^{\ssup{1,0}}_1 \right)= \widetilde{L}_2.
\end{eqnarray*}
 Suppose that \eqref{idenL1}-\eqref{idenL2} holds for all $m\in\{1,2,\dots,n\}$. We prove next that it {holds} for $n+1$. For any $2 \times 2$  matrix $M$,  we define $\text{diag}(M)$ to be the two-dimensional vector with the diagonal elements of $M$.
Moreover
 $$
 \begin{aligned}
L^*_{2n+1}&= \text{diag}\left(\left[ {\begin{array}{cc}
   L^*(1, \tau_{2n-2}^*) & \Big(w(L^*(0, \tau_{2n-1}^*))\Big)^{-1} \\
  L^*(0, \tau_{2n-1}^*) & \Big(w(L^*(1, \tau_{2n}^*))\Big)^{-1} \\
  \end{array} } \right]\left[ {\begin{array}{cc}
   1 & 1 \\
   \chi^{\ssup{1,0}}_{n}  & \ \chi^{\ssup{0,1}}_{n+1} \\
  \end{array} } \right]\right)\\
  &\sim \text{diag}\left(\left[ {\begin{array}{cc}
   \widetilde{L}(0, \widetilde{\tau}_{2n-1}) & \Big(w( \widetilde{L}(1, \widetilde{\tau}_{2n}))\Big)^{-1}\\
   \widetilde{L}(1, \widetilde{\tau}_{2n}) & \Big(w (\widetilde{L}(0, \widetilde{\tau}_{2n+1}))\Big)^{-1}\\
  \end{array} } \right]\left[ {\begin{array}{cc}
   1 & 1 \\
   \chi^{\ssup{0,1}}_{n+1}  & \ \chi^{\ssup{1,0}}_{n+1} \\
  \end{array} } \right]\right)\\
  &=\widetilde{L}_{2n+2}.
  \end{aligned}
 $$
  Similarly, we also have that $L^*_{2n+2}\sim \widetilde{L}_{2n+3}$.
 \end{proof}

 \subsection{Proof of non-transience}
 
It is enough to prove that VRJP($w$) defined on $\Z^+$, which is the set of non-negative integers, cannot be transient. Hence, throughout the remaining part of this section, $\X$ refers to a canonical VRJP($w$) on $\Z^+$. {We always assume that $X_0=0$, $\ell_x=1$ for all $x\in \Z^+$ and $w$ satisfies Assumption W.}

Denote by $\left(L(i, t)\colon i\in \Z^+ , t\in [0, \infty) \right)$ the process of local times related to $\X$. 

The next result rules out the following trivial case: if there is a positive probability for $\X$ to localize on a finite region, then this process cannot be transient.
\begin{proposition}\label{prop:no1}
Suppose that there exists $j \in \Z^+$ such that
\begin{equation}\label{eq:linf}
\P\left( L(j,\infty)=\infty, L(i,\infty)<  \infty, \forall i\ge j+1 \right)>0
\end{equation}
Then $\P(\X\text{ is transient})=0$.
\end{proposition}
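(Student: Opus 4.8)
The plan is to combine translation invariance with the restriction principle in order to produce infinitely many \emph{independent} ``localization attempts'' along a sparse sequence of levels, and then to invoke the Borel--Cantelli lemma. Write $p$ for the probability in \eqref{eq:linf} and let $T=\{\lim_{t\to\infty}X_t=\infty\}$ be the transience event. The bare observation that $T$ is disjoint from the event in \eqref{eq:linf} (on $T$ every local time is finite, so in particular $L(j,\infty)<\infty$) only yields $\P(T)\le 1-p$; the point is to amplify this to $\P(T)=0$ by realising that the walk gets an independent chance to localize each time it reaches a fresh level.

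I first record two preliminary facts. On the event in \eqref{eq:linf} the walk visits $j+1$ only finitely often, hence after its last visit to $j+1$ it never exceeds $j$; consequently the running maximum $M:=\sup_{t\ge0}X_t$ is a.s. finite on that event. Therefore
$$
\P\big(L(j,\infty)=\infty,\ L(i,\infty)<\infty\ \forall i\ge j+1,\ M<g\big)\ \uparrow\ p \qquad (g\to\infty),
$$
so I may fix an integer $g>j$ for which the left-hand side is at least $p/2$. Secondly, since all initial weights equal $1$ and $w(0)=0$, the extension $\X^{(B)}$ on $B=\{k,k+1,\dots\}$ starts at $k$ and can never leave $B$ (its left neighbour carries rate $w(0)=0$); relabelling $i\mapsto i-k$ then shows that $\X^{(B)}$ has exactly the law of $\X$, and that $\X^{(B)}$ is driven only by the exponentials $\chi^{(i,i')}$ with $i,i'\ge k$.

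Now set $k_m:=mg$, $B_m:=\{k_m,k_m+1,\dots\}$, and define the ``confined localization'' events
$$
C_m:=\Big\{L^{(B_m)}(k_m+j,\infty)=\infty,\ L^{(B_m)}(i,\infty)<\infty\ \forall i> k_m+j,\ \sup_t X^{(B_m)}_t<k_{m+1}\Big\}.
$$
By the translation invariance above, $\P(C_m)=\P\big(L(j,\infty)=\infty,\ L(i,\infty)<\infty\ \forall i\ge j+1,\ M<g\big)\ge p/2$ for every $m$. The decisive point is that $C_m$ is measurable with respect to $\mathcal{G}_m:=\sigma\big(\chi^{(i,i')}:k_m\le i,i'\le k_{m+1}\big)$: on $C_m$ the extension never reaches $k_{m+1}$, so (killing $\X^{(B_m)}$ at $k_{m+1}$ if need be) the whole trajectory is confined to $\{k_m,\dots,k_{m+1}-1\}$ and uses no edge beyond $(k_{m+1}-1,k_{m+1})$, while it never uses the edge $(k_m-1,k_m)$. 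Hence distinct $C_m$ depend on disjoint families of driving exponentials and are independent, with $\sum_m\P(C_m)=\infty$.

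Finally I combine this with the restriction principle. On $T$ we have $X_t\to\infty$, so for every $m$ the walk spends an infinite amount of time in $B_m$, i.e. $T_{B_m}=\infty$, and the restriction principle gives $L(i,\infty)=L^{(B_m)}(i,\infty)$ for all $i\ge k_m$. Thus, were $C_m$ to occur, we would have $L(k_m+j,\infty)=L^{(B_m)}(k_m+j,\infty)=\infty$, contradicting transience; hence $T\cap C_m=\emptyset$ for every $m$, so $T\subseteq\bigcap_m C_m^{\,c}$. Since the $C_m$ are independent with $\sum_m\P(C_m)=\infty$, the second Borel--Cantelli lemma gives $\P\big(\bigcap_m C_m^{\,c}\big)=0$, and therefore $\P(T)=0$. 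The main obstacle is precisely this independence: the extensions $\X^{(B_k)}$ for consecutive $k$ share their driving randomness, so one cannot treat the localization opportunities at successive levels as independent directly; passing to the sparse grid $k_m=mg$ and confining attention to the events $C_m$ (localization that never reaches the next grid point) is what separates them into disjoint blocks of exponentials, and the finiteness of $M$ on \eqref{eq:linf} is what keeps $\P(C_m)$ bounded away from $0$.
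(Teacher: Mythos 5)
Your argument is sound and reaches the conclusion by a genuinely different route from the paper. The paper also works with the extensions $\X^{\ssup i}$ on $i+\Z^+$ and the restriction principle, but it does \emph{not} truncate the localization events: it writes $\1_{A_i}=g(\Xi_i)$ as a fixed function of the shifted i.i.d. array of exponentials and invokes Birkhoff's ergodic theorem to conclude that almost surely some $A_j$ occurs, which is incompatible with transience. You instead pay for genuine independence by confining each localization attempt to a block $\{k_m,\dots,k_{m+1}-1\}$ of the sparse grid $k_m=mg$, so that the events $C_m$ live on disjoint families of driving exponentials, and then you apply the second Borel--Cantelli lemma. The trade-off is clear: the ergodic argument needs no confinement and no control of the running maximum, while your argument is more elementary (stationarity and ergodicity are replaced by outright independence) but requires the extra preliminary that $M=\sup_t X_t<\infty$ a.s. on the event \eqref{eq:linf}, so that the confined probability $\P(C_m)$ stays bounded away from $0$. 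Both proofs use the restriction principle in the same way at the end, to transport $C_m$ (respectively $A_j$) from the extension back to $\X$ on the transience event. Your killing argument for the $\mathcal{G}_m$-measurability of $C_m$ and the disjointness of the edge families attached to consecutive blocks are correct.

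One step is misstated, though the conclusion you need survives. You claim that on the event \eqref{eq:linf} the walk visits $j+1$ only finitely often, and deduce $M<\infty$ from that. This is not justified: each sojourn at $j+1$ is exponential with rate $w(L(j,t))+w(L(j+2,t))$, and since $L(j,\infty)=\infty$ the rate may blow up, so $j+1$ can be visited infinitely often while $L(j+1,\infty)<\infty$ (this is exactly what happens for the strongly reinforced process on two vertices, cf. Proposition~\ref{bounded}). The fix is to look one step further out: if $j+2$ were visited infinitely often, then since $L(j+1,\infty)\vee L(j+3,\infty)<\infty$ on the event \eqref{eq:linf}, the argument of Proposition~\ref{lem3x} forces $L(j+2,\infty)=\infty$, a contradiction; hence $j+2$ is visited finitely often, the last such visit occurs at a finite time, and non-explosion gives $M<\infty$ a.s. on the event. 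With that substitution your choice of $g$ and the bound $\P(C_m)\ge p/2$ go through unchanged.
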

\begin{proof}
{For $i<j$, define
\begin{equation}\label{Pij}\mathcal{P}_{[i,j]} = \sigma\Big(\chi^{\ssup {k, s}}_n\colon k,s \in\{i, i+1, \ldots,j-1,j\}, |k -s| =1, n \in \N\Big).\end{equation}}
Let \begin{equation}\label{Pi}{\mathcal{P}_{i}} = \sigma\left(\bigcup_{j=i+1}^{\infty}  \mathcal{P}_{[i,j]}\right).\end{equation}
Recall that $(\chi^{(i, i+1)}_n)_n$ and $(\chi^{(i+1, i)}_n)_n$ are independent. 
Hence, it is evident that $\mathcal{P}_{[0,i]}$ and ${\mathcal{P}_{i}}$ are independent. Consider the extension $\X^{\ssup i}$ on $i+\Z^+$ and let $L^{\ssup i}(k, \infty)$  be its local times.  Set
$$A_i = \Big\{L^{\ssup i}(i, \infty)=\infty,\ L^{\ssup i}(j, \infty)<\infty \mbox{ for all } j \ge i+1\Big\}. $$
Notice that each $A_i$ is measurable with respect to ${\mathcal{P}_{i}}$, hence $\1_{A_i}$ is a function of the exponential random variables
$$ \Xi_i :=\left \{\chi^{(j,k)}_n \mbox{ where } |j-k|=1, \mbox{ and } j\wedge k :=\min\{j,k\} \ge i, n\in\N \right\}.$$
Each $\Xi_i$ is an infinite vector with i.i.d. coordinates, distributed as exponential(1). Hence $(\Xi_i)_i$ is ergodic. Moreover, as the VRJPs $\X^{\ssup i}$, after a relabelling of the vertices, share the same distribution, we have that $A_i$s have the same probabilty, and there exists a measurable function $g$ such that 
\begin{equation}\label{eq:bir1}
\1_{A_i} = g(\Xi_i).
\end{equation}
We first prove that if \eqref{eq:linf} holds then 
\begin{equation}\label{eq:linf1}
\P\left(A_0\right)>0.
\end{equation}
For all $j \in \Z^+$ on the event $\{\X \mbox{ visits $j+\Z^+$ infinitely often}\}$ we have that the restriction of $\X$ to $j+\Z^+$ coincides with the extension on that set. Hence
$$ \{L(j,\infty)=\infty, L(i,\infty)<  \infty, \forall i\ge j+1\} = \{\X \mbox{ visits $j+\Z^+$ infinitely often}\}\cap A_j.$$
Hence, for any $j \in \Z^+$, we have 
$$
\begin{aligned}
\P\left( L(j,\infty)=\infty, L(i,\infty)<  \infty, \forall i\ge j+1 \right)  &\le \P(A_j) = \P(A_0).
\end{aligned}
$$
It follows that \eqref{eq:linf1} holds, if \eqref{eq:linf} does. Using Birkhoff's ergodic theorem (see e.g. Theorem 7.2.1 in \cite{Durrett2010}), we have
$$ \lim_{N \ti} \frac 1N \sum_{i=1}^N \1_{A_i} = \lim_{N \ti} \frac 1N \sum_{i=1}^N g(\Xi_i)= \P(A_0) > 0, \qquad \mbox{a.s..}$$
Consequently,  with probability 1, there exists $j\in 
\Z^+$ such that $A_j$ holds. Note that for each $j\ge 1$, $\{\X \text{ is transient}\}\subset \{\X^{(j)} \text{ is transient}\} \subset A_j^c$. Hence 
$$\P(\X \text{ is transient})\le  \P\left(\bigcap_{j=1}^{\infty} A_j^c\right)=0.$$
\end{proof}
From now on, in order to prove that $\X$ is not transient, we can assume that for all $j$
\begin{equation}\label{eq:assump} \P\left(L(j,\infty)=\infty, L(k,\infty)<  \infty, \forall k\ge j+1 \right)=0.
\end{equation}
 \begin{proposition}\label{independence}
Assuming \eqref{eq:assump}, for any $i \in \N$ we have that the random variable $L(i,\infty)$ and the extension of $\X$ to the set $\{0, 1, 2, \ldots, i\}$ are independent. {Furthermore, $(L(i,\infty))_{i\in \Z^+}$ is a sequence of identically distributed random variables.}
 \end{proposition}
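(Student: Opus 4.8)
The plan is to identify $L(i,\infty)$ with the local time at $i$ of the extension of $\X$ on the right half-line $i+\Z^+$, and then to exploit the fact that the exponential clocks governing the dynamics on $\{0,1,\dots,i\}$ are independent of those governing the dynamics on $\{i,i+1,\dots\}$. Write $B=\{i,i+1,\dots\}=i+\Z^+$ and $B'=\{0,1,\dots,i\}$, and let $L^{(B)}$ denote the local times of the extension $\X^{(B)}=\X^{(i)}$. By construction the extension $\X^{(B')}$ uses only the exponentials attached to edges inside $B'$, so it is $\mathcal{P}_{[0,i]}$-measurable, while $\X^{(B)}$, and in particular $L^{(B)}(i,\infty)$, uses only the exponentials attached to edges inside $B$, so it is $\mathcal{P}_i$-measurable. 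Since $\mathcal{P}_{[0,i]}$ and $\mathcal{P}_i$ are generated by disjoint families of the driving exponentials, they are independent, as already noted. Consequently both assertions will follow once I show that, under \eqref{eq:assump},
\begin{equation*}
L(i,\infty)=L^{(B)}(i,\infty)\qquad\text{almost surely.}
\end{equation*}

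By the restriction principle this identity holds on the event $\{T_B=\infty\}$, so the crux is to prove $\P(T_B<\infty)=0$. I would argue as follows. On $\{T_B<\infty\}$ the process spends only finitely much time in $B$, hence $L(k,\infty)<\infty$ for every $k\ge i$; meanwhile, by non-explosion (Proposition~\ref{non-explosion}) the total time spent in $\Z^+$ is infinite, so the time spent in the \emph{finite} set $\Z^+\setminus B=\{0,\dots,i-1\}$ is infinite, and therefore at least one of these vertices carries infinite local time. Taking $j$ to be the largest index in $\{0,\dots,i-1\}$ with $L(j,\infty)=\infty$, maximality together with $L(k,\infty)<\infty$ for $k\ge i$ forces $L(k,\infty)<\infty$ for all $k\ge j+1$. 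Thus
\begin{equation*}
\{T_B<\infty\}\subseteq\bigcup_{j=0}^{i-1}\bigl\{L(j,\infty)=\infty,\ L(k,\infty)<\infty\ \forall k\ge j+1\bigr\},
\end{equation*}
and the right-hand side is a null event by \eqref{eq:assump}. Hence $T_B=\infty$ a.s., which yields the displayed identity.

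Independence is then immediate: $L(i,\infty)$ is a.s.\ equal to the $\mathcal{P}_i$-measurable random variable $L^{(B)}(i,\infty)$, and is therefore independent of the $\mathcal{P}_{[0,i]}$-measurable extension $\X^{(B')}$. For the identically-distributed claim I would invoke the self-similarity of the canonical construction: the exponentials $\chi_n^{(k,s)}$ with $k,s\ge i$ that drive $\X^{(B)}$ are i.i.d.\ and their index set is the translate by $i$ of the index set driving $\X$ itself on $\Z^+$; as both processes start at the left endpoint of their half-line with unit initial weights, $L^{(B)}(i,\infty)\eqD L(0,\infty)$. Combining this with $L(i,\infty)=L^{(B)}(i,\infty)$ a.s.\ gives $L(i,\infty)\eqD L(0,\infty)$ for every $i$, so the sequence $(L(i,\infty))_{i\in\Z^+}$ is identically distributed.

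The step I expect to be delicate is the passage from $\{T_B<\infty\}$ to the localization event of \eqref{eq:assump}: one must argue carefully that a finite occupation time of $B$ really does push infinite local time onto the finite complementary set $\{0,\dots,i-1\}$, which is exactly where non-explosion is essential. The remaining steps are routine bookkeeping with the clock-based construction and the restriction principle.
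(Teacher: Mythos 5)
Your proposal is correct and follows essentially the same route as the paper: show that under \eqref{eq:assump} the process spends an infinite amount of time in $i+\Z^+$, invoke the restriction principle to identify $L(i,\infty)$ with $L^{(i)}(i,\infty)$, and then read off independence from the disjointness of the exponential families generating $\mathcal{P}_{[0,i]}$ and $\mathcal{P}_i$, with the identical-distribution claim coming from translation invariance of the canonical construction. You merely spell out in more detail the step the paper states tersely, namely that $\{T_{i+\Z^+}<\infty\}$ is contained in the union over $j\le i-1$ of the localization events excluded by \eqref{eq:assump}.
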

 \begin{proof}
{For any $i \in  \N$, using  \eqref{eq:assump} we infer that $\X$ cannot localize on $\{0,1, \ldots, i\}$. In other words, visits $i+\N$ infinitely often.} Recall the definition of  $\X^{(i)}$ given in the proof  of Proposition~\ref{prop:no1} and its local times $(L^{(i)}(j, t):j \ge i, t\in [0, \infty))$. Using \eqref{eq:assump} with $j\le i-1$, we have that $\X$ must  almost surely spend an unbounded amount of time on the set  $i + \Z^+$. By the restriction principle, we have that $L(i, \infty)= L^{(i)}(i, \infty)$ almost surely. {Since $L^{(i)}(i, \infty)$ has the same distribution as $L(0,\infty)$, it follows that $(L(i,\infty))_{i\in \Z^+}$ are identically distributed random variables}. {Recall that $\mathcal{P}_{[0,i]}$ and $\mathcal{P}_i$ are $\sigma$-algebras defined respectively in \eqref{Pij} and \eqref{Pi}}. Notice that $L^{(i)}(i, \infty)$ is measurable with respect to  {$\mathcal{P}_{i}$}. On the other hand,  the extension of $\X$ to the set $\{0, 1, 2, \ldots i\}$ is measurable with respect to $\mathcal{P}_{[0,i]}$. As ${\mathcal{P}_{i}}$ and $\mathcal{P}_{[0,i]}$ are independent, this concludes the proof.
 \end{proof}

\begin{proposition}\label{prop:no2}
Assume  \eqref{eq:assump}. If $\P(L(1,\infty)=\infty)>0$ then $\P( \X \text{ is transient})=0$.
\end{proposition}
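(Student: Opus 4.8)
The plan is to show that, under \eqref{eq:assump} together with the hypothesis $\P(L(1,\infty)=\infty)>0$, almost surely \emph{infinitely many} vertices carry an infinite total local time. Since transience forces every local time to be finite, this immediately yields $\P(\X\text{ is transient})=0$. The comparison estimates of Proposition~\ref{comparison} are not needed here; the whole argument rests on the restriction principle and the same ergodic structure already exploited in the proof of Proposition~\ref{prop:no1}.

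First I would record, from Proposition~\ref{independence} and the restriction principle used in its proof, that under \eqref{eq:assump} one has $L(i,\infty)=L^{(i)}(i,\infty)$ almost surely for every $i\in\Z^+$, and that the variables $(L(i,\infty))_{i\in\Z^+}$ are identically distributed. In particular, setting $p:=\P(L(1,\infty)=\infty)$, the hypothesis gives $p>0$, and identical distribution yields $\P(L(i,\infty)=\infty)=p$ for all $i\ge 0$.

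Next I would introduce the events $B_i:=\{L^{(i)}(i,\infty)=\infty\}$. Exactly as in the proof of Proposition~\ref{prop:no1}, each $B_i$ is measurable with respect to $\mathcal{P}_i$, hence $\1_{B_i}=h(\Xi_i)$ for a single measurable function $h$, where $\Xi_i$ is the i.i.d.\ family of exponentials whose two vertex–indices are both at least $i$. By the relabelling symmetry of the extensions, $L^{(i)}(i,\infty)$ all share the same law as $L^{(1)}(1,\infty)$, and since $L^{(1)}(1,\infty)=L(1,\infty)$ a.s., we get $\P(B_i)=\P(L(1,\infty)=\infty)=p>0$. As $(\Xi_i)_i$ is ergodic, Birkhoff's ergodic theorem gives
\[
\lim_{N\to\infty}\frac1N\sum_{i=1}^N \1_{B_i}=p>0\qquad\text{a.s.,}
\]
so almost surely $\sum_{i}\1_{B_i}=\infty$. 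Invoking again $L(i,\infty)=L^{(i)}(i,\infty)$ a.s., this says that almost surely infinitely many vertices $i$ satisfy $L(i,\infty)=\infty$.

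Finally, on the event $\{\X\text{ is transient}\}$ we have $X_t\to\infty$, so every vertex is visited only finitely often and every $L(i,\infty)$ is finite; in particular no $B_i$ occurs. This contradicts the previous paragraph unless the transient event is null, which gives $\P(\X\text{ is transient})=0$. The one delicate point, which I would treat carefully, is the reduction $L(i,\infty)=L^{(i)}(i,\infty)$ together with the $\Xi_i$-measurability of $B_i$: this is precisely what converts the a priori global events $\{L(i,\infty)=\infty\}$ into functions of a stationary ergodic sequence and makes Birkhoff's theorem applicable. Everything else is bookkeeping.
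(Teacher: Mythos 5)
Your proof is correct and follows essentially the same route as the paper's: both apply Birkhoff's ergodic theorem to the stationary sequence of extension events built from the shifted exponential families $\Xi_i$. The only (harmless) difference is that you average the indicators of $\{L^{(i)}(i,\infty)=\infty\}$ and invoke the identification $L(i,\infty)=L^{(i)}(i,\infty)$ from Proposition~\ref{independence} to conclude that infinitely many vertices of $\X$ carry infinite local time, whereas the paper averages the indicators of $\{L^{(i)}(i+1,\infty)=\infty\}$ and concludes via the inclusion $\{\X \text{ transient}\}\subseteq\{\X^{(j)} \text{ transient}\}$.
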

\begin{proof}
Suppose that  $\P(L(1,\infty)=\infty)>0$.  Recall the extensions $\X^{\ssup i}$ defined in the proof of Proposition~\ref{prop:no1}.  We have that $(\1_{\{L^{\ssup i}(i+1, \infty)=\infty\}})_{i\in \Z^+}$ is ergodic. More precisely, there exists a measurable function $f$ such that for all $i\in \Z^+$, we have  $\1_{\{L^{\ssup i}(i+1, \infty)=\infty\}}= f(\Xi_i)$, where $\Xi_i$ were defined in the proof of Proposition~\ref{prop:no1}. Recall that $\X^{\ssup 0}= \X$.  Using Birkhoff's ergodic theorem,  we have
$$\lim_{N \ti} \frac{1}{N} \sum_{i=0}^N\1_{\{L^{(i)}(i+1,\infty)=\infty\}}= \P(L(1,\infty)=\infty)>0 \quad \text{a.s.}.$$
Hence, with probability 1, there exists a $j\in \Z^+$ such that $L^{(j)}(j+1,\infty)=\infty$ yielding that $\X^{(j)}$ is not transient. The process $\X$ is thus not transient.
\end{proof}
 {Recall that $\widetilde{\X}$ (defined in Subsection~\ref{subsec:ex}) is the extension of $\X$ on $\{0,1\}$ and $(\widetilde{L}(i,t), i\in\{0,1\},t\in\R^+)$ is the process of local times  associated with $\widetilde{\X}$.}
 
\begin{proposition}\label{prop:no3}
{Assuming \eqref{eq:assump}, we have that $\widetilde{L}(1, \infty) = \infty$ almost surely.}
\end{proposition}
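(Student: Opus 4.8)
The plan is to argue by contradiction and to extract, out of any positive-probability occurrence of $\{\widetilde{L}(1,\infty)<\infty\}$, a localization event for $\X$ on $\Z^+$ of exactly the type excluded by \eqref{eq:assump}. Suppose then that $\P(\widetilde{L}(1,\infty)<\infty)>0$. Since $\{0,1\}$ is finite, the two-vertex extension $\widetilde{\X}$ is non-explosive, so $\widetilde{L}(0,\infty)+\widetilde{L}(1,\infty)=\infty$ almost surely; consequently $\widetilde{L}(0,\infty)=\infty$ on the event $\{\widetilde{L}(1,\infty)<\infty\}$.

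The heart of the matter is a coupling between the full canonical process $\X$ and its two-vertex extension $\widetilde{\X}$, both driven by the shared clocks $\chi^{\ssup{0,1}}$ and $\chi^{\ssup{1,0}}$. Note that $0$ is an endpoint of $\Z^+$, so the only way for $\X$ to leave $\{0,1\}$ is the jump $1\to 2$. As long as $\X$ has never performed that jump one has $L(2,\cdot)\equiv 1$, so by Assumption~W the instantaneous rate at which $\X$ attempts $1\to 2$ equals $w(1)=1$, and up to the first such jump the restriction of $\X$ to $\{0,1\}$ coincides with $\widetilde{\X}$. I would realise this by superimposing on $\widetilde{\X}$ an independent rate-one ``escape clock'' that runs only while the walk sits at $1$: if this clock does not ring before $\widetilde{\X}$ has accumulated all of its (finite) local time at $1$, then $\X$ never reaches $2$ and therefore agrees with $\widetilde{\X}$ for all time. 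Writing $S=\widetilde{L}(1,\infty)-1$ for the total local time spent at $1$, the conditional probability of no escape is $e^{-S}$ on $\{S<\infty\}$, since the escape clock is active for total duration $S$ at rate $1$.

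On the no-escape event intersected with $\{\widetilde{L}(1,\infty)<\infty\}$ the process $\X$ stays in $\{0,1\}$ forever, so $L(0,\infty)=\widetilde{L}(0,\infty)=\infty$, $L(1,\infty)=\widetilde{L}(1,\infty)<\infty$, and $L(k,\infty)=1<\infty$ for every $k\ge 2$. This is precisely the configuration $\{L(0,\infty)=\infty,\ L(k,\infty)<\infty\ \forall k\ge 1\}$ ruled out by \eqref{eq:assump} with $j=0$. Integrating the conditional no-escape probability yields
$$\P\big(L(0,\infty)=\infty,\ L(k,\infty)<\infty\ \forall k\ge 1\big)\ \ge\ \E\Big[e^{-(\widetilde{L}(1,\infty)-1)}\,\1_{\{\widetilde{L}(1,\infty)<\infty\}}\Big].$$
The right-hand side is strictly positive as soon as $\P(\widetilde{L}(1,\infty)<\infty)>0$, because the integrand is positive on a set of positive probability. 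This contradicts \eqref{eq:assump}, and hence forces $\widetilde{L}(1,\infty)=\infty$ almost surely.

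The step I expect to be the most delicate is the coupling in the second paragraph: one must check that, prior to the first jump to $2$, the full canonical process decomposes genuinely as the two-vertex process $\widetilde{\X}$ together with an independent Poisson escape clock of constant rate $w(1)=1$, so that the no-escape probability equals $e^{-S}$ and is independent of the two-vertex trajectory given $S$. This is exactly where the restriction principle and the precise form of the canonical construction are used (in particular $w(1)=1$ together with $L(2,\cdot)\equiv 1$ before any escape), and some care is needed to verify that the escape clock does not feed back into the $\{0,1\}$-dynamics.
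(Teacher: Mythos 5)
Your proof is correct, but it takes a genuinely different route from the paper's. The paper's argument is an independence/support argument: if $\P(\widetilde{L}(1,\infty)\le K)>0$ for some finite $K$, it observes that $\P(L(1,\infty)\ge 2K)>0$ (the first holding time at $1$ has full support), that $L(1,\infty)\le\widetilde{L}(1,\infty)$ a.s.\ by the restriction principle, and that $L(1,\infty)$ is independent of the two-vertex extension by Proposition~\ref{independence} (itself a consequence of \eqref{eq:assump} and the disjointness of the driving exponential families); the event $\{\widetilde{L}(1,\infty)\le K,\ L(1,\infty)\ge 2K\}$ is then simultaneously null and of positive product probability. You instead run a thinning argument: before the first jump $1\to 2$ the process agrees on $\{0,1\}$ with $\widetilde\X$ and attempts that jump at constant rate $w(1)=1$ in the local time at $1$, so with conditional probability $e^{-(\widetilde L(1,\infty)-1)}>0$ it never escapes, which realizes the event $\{L(0,\infty)=\infty,\ L(k,\infty)<\infty\ \forall k\ge 1\}$ excluded by \eqref{eq:assump} with $j=0$. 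Both proofs consume \eqref{eq:assump} and the restriction principle; yours bypasses Proposition~\ref{independence} entirely, is quantitative (an explicit positive lower bound on the localization probability), and identifies exactly which instance of \eqref{eq:assump} is violated, while the paper's version avoids having to justify a superposition coupling. The step you flag as delicate is indeed the one place needing real work: you must verify that the $1\to2$ escape mechanism is a unit-rate exponential in the local time at $1$, independent of the whole trajectory of $\widetilde\X$ (not merely of its past), which uses $\ell_2=1$, $w(1)=1$ and the independence of the $\chi^{\ssup{1,2}}$ family from $\chi^{\ssup{0,1}},\chi^{\ssup{1,0}}$; this is routine given the defining infinitesimal rates of the VRJP, and with it supplied the argument is complete.
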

 
\begin{proof}
{Suppose that} $\P(\widetilde{L}(1,\infty)<\infty)>0$. {It follows that there exists a deterministic constant $K\in (1,\infty)$ such that $\P(\widetilde{L}(1,\infty)\le K)>0$. On the other hand, $$\P(L(1,\infty)\ge 2K)\ge \P\left( 1+\frac{\chi^{(1,0)}_1}{w(1+\chi^{(0,1)}_1)} \wedge \chi^{(1,2)}_1\ge 2K\right)>0.$$ 
Here we note that $\big(\chi^{(1,0)}_1/w(1+\chi^{(0,1)}_1) \big)\wedge \chi^{(1,2)}_1$ is the first waiting time at 1 before jumping to either 0 or 2. This random variable has support on the whole interval $[0,\infty)$ as $\chi^{(1,0)}_1$, $\chi^{(0,1)}_1$ and $\chi^{(1,2)}_1$ are independent exponential random variables.} 

{By the restriction principle, we have that  a.s. $L(1,\infty)\le \widetilde{L}(1,\infty)$. In virtue of Proposition \ref{independence}, we also have that $L(1,\infty)$ and $\widetilde{L}(1,\infty)$ are independent. Hence,  
$$0= \P(\widetilde{L}(1,\infty)\le K, L(1,\infty)\ge 2 K )=\P(\widetilde{L}(1,\infty)\le K)\P(L(1,\infty)\ge 2K)>0.$$
This contradiction ends the proof of the proposition. }  
\end{proof}

\begin{remark}\label{rem:no4} Notice that Proposition~\ref{comparison} implies  that 
$$
\P(\widetilde{L}(0, \infty) < \infty) \le \P(\widetilde{L}(1, \infty)  < \infty).
$$
{
In virtue of Proposition~\ref{prop:no2}, assuming \eqref{eq:assump} we have $\P(\widetilde{L}(0, \infty) < \infty)=\P(\widetilde{L}(1, \infty) < \infty)=0$. On the other hand, assuming that \eqref{eq:assump} does not hold, by reason of Proposition~\ref{prop:no1} the process $\X$ cannot be transient. Hence, to rule out the transience of $\X$, we assume from now on that}
\begin{equation}\label{eq:assumpov}
 \P(\widetilde{L}(1,\infty)=\widetilde{L}(0,\infty)=\infty)=1.
\end{equation}
\end{remark}
In what follows, $\P$ and $\E$ denote the probability measure and the expected value associated with the canonical process VRJP$(w)$ {on $\Z^+$} when $X_0=0$ and $\ell_x=1$ for all {$x\in \mathbb{Z}^+$}. We use $\P^{a,b}$ and  $\E^{a, b}$ to denote the probability measure and the expectation associated with the  canonical process extension on $\{0,1\}$ with initial local times $\ell_0=a$, $\ell_1=b$. Moreover, we use  $\E^{1,1+\exp}$  for the case when $\ell_0=1$ and $\ell_1$ is $1$ plus an exponential random variable with parameter 1. 
 
{Recall that $\widetilde\X$ and $\X^*$ ({defined in Subsection~\ref{subsec:ex}}) are two coupled VRJP on $\{0,1\}$ starting from 0 with initial local times $(\ell_0, \ell_1)$ and $(\ell_0^*, \ell_1^*)$ respectively. From now on, we always assume that $$\P(\ell_0=\ell_1=1)=\P(\ell_0^*=1,\ell_1^*=1+A)=1,$$  where $A$ is an exponential(1) random variable and it is {independent of everything else}. Note from Proposition~\ref{comparison}(ii) that $\widetilde{\X}$ under the probability measure $\P^{1,1+\exp}$ has the same law as $\X^*$ under $\P$.} {For $t\ge 0$, set 
$$\widetilde{\mathcal{F}}_t= \sigma\left(L(1,\infty),(\widetilde{X}_s)_{0\le s\le t}\right)\quad\text{and}\quad \mathcal{F}^*_t=\sigma\Big(L(1,\infty), ({X}^*_s)_{0\le s\le t}\Big).$$  
Assume \eqref{eq:assump}. In virtue of Proposition \ref{independence}, we have that $L(1,\infty)$ is independent of $\widetilde{\mathbf X}=(\widetilde{X}_t)_{t\ge0}$ and $\X^*=(X_t^*)_{t\ge0}$.
Hence} for some fixed $T>0$, it is clear (from the memoryless property) that when $\widetilde{\mathcal{F}}_T$ is given, the process $(\widetilde{X}_{t+T})_{t\ge 0}$ is a VRJP starting from $\widetilde X_T$ with initial local times $\widetilde L(0,T)$ and $\widetilde L(1,T)$. This property also holds for $\X^*$.   
 
In what follows, we make the convention that $\inf \varnothing=\infty.$ For $i\in\{0,1\}$, we define
\begin{align*} \widetilde{\eta}_i = \inf\{t\ge0 \colon \widetilde{L}(i, t)= L(1,\infty)\}, \quad
 \eta^*_i= \inf\{t\ge0 \colon L^*(i, t)= L(1,\infty)\}. 
\end{align*}

 {Since $L(1,\infty)$ is independent of both $\widetilde{\X}$ and $\X^*$, we have that} $\widetilde\eta_0$ and $\widetilde\eta_1$ are stopping times w.r.t. $\widetilde{\mathcal{F}}_t$ while $\eta_0^*$ and $\eta_1^*$ are stopping times w.r.t. $\mathcal{F}^*_t$. 
Set
$$\widetilde{\mathcal{F}}_{\widetilde{\eta}_0}:=\{B\in \widetilde{\mathcal{F}}: B\cap \{\widetilde\eta_0\le t\} \in \widetilde{\mathcal{F}}_t\},\quad \mathcal{F}^*_{{\eta}^*_0}:=\{B\in \mathcal{F}^*: B\cap \{\eta_0^*\le t\} \in \mathcal{F}^*_t\},$$ where $\widetilde{\mathcal{F}}$ and $\mathcal{F}^*$ are  the $\sigma$-fields generated by $\bigcup_{t\ge0}\widetilde{\mathcal{F}}_t$ and $\bigcup_{t\ge0}\mathcal{F}^*_t$  respectively.  

\begin{proposition}\label{prop.eta} {Assume \eqref{eq:assump}, \eqref{eq:assumpov} and $\P\left( L(1,\infty)<\infty\right)=1$. We have that $\eta^*_i<\infty$ and $\widetilde{\eta}_i<\infty$ a.s. for all $i\in\{0,1\}$ and}
\begin{align}\label{monofxi1}
  \{\eta^*_1>  \eta^*_0\} & \subseteq  \{\widetilde{\eta}_1>  \widetilde{\eta}_0\},\\
\label{monofxi0}
 \widetilde{L}(0,   \widetilde{\eta}_1)  & \ge L^*(0,  \eta^*_1),\\
\label{monofxi0b}
 \widetilde{L}(1,   \widetilde{\eta}_0)  & \le L^*(1,  \eta^*_0).\quad \end{align}
Furthermore, 
\begin{align}
{\label{sper}  \1_{\{\widetilde{\eta}_1>  \widetilde{\eta}_0\}} \E\left[ \widetilde{L}(0, \widetilde{\eta}_1)\wedge M-L(1,\infty)\wedge M \big| \  \widetilde{\mathcal{F}}_{\widetilde{\eta}_0} \right]  \ge \1_{\{{\eta}^*_1>  {\eta}^*_0\}} \E\left[ L^*(0, \eta^*_1)\wedge M-L(1,\infty)\wedge M \big| \ \mathcal{F}^*_{\eta^*_0}\right]}
\end{align}
for any deterministic $M>0$.
\end{proposition}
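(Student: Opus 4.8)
The plan is to convert every statement about the continuous hitting times $\widetilde\eta_i,\eta^*_i$ into a statement about discrete jump indices, where the coupling inequalities of Proposition~\ref{comparison}(i) apply verbatim, and then to treat the conditional bound \eqref{sper} by a strong Markov (memoryless) restart at $\widetilde\eta_0$ and $\eta^*_0$. For finiteness, note that under \eqref{eq:assumpov} the $(1,1)$-process $\widetilde\X$ has $\widetilde L(0,\infty)=\widetilde L(1,\infty)=\infty$ a.s., and by Proposition~\ref{comparison}(ii) the process $\X^*$ is distributed as $(\widetilde L(1,\cdot),\widetilde L(0,\cdot))$ shifted past $\widetilde\tau_1$, so $L^*(0,\infty)=L^*(1,\infty)=\infty$ a.s. as well. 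Since $\ell:=L(1,\infty)<\infty$ by hypothesis, each local time eventually reaches the level $\ell$, giving $\widetilde\eta_i,\eta^*_i<\infty$ a.s.

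On the two-vertex graph the trajectory is forced to alternate $0,1,0,1,\dots$, so $\widetilde\X$ and $\X^*$ share the same deterministic sequence of visited vertices and differ only in their jump times; moreover $n\mapsto\widetilde L(0,\widetilde\tau_n)$ increases precisely when $n$ is odd and $n\mapsto\widetilde L(1,\widetilde\tau_n)$ precisely when $n$ is even. Writing $\widetilde N_i=\inf\{n:\widetilde L(i,\widetilde\tau_n)\ge\ell\}$ and $N^*_i=\inf\{n:L^*(i,\tau^*_n)\ge\ell\}$, the time $\widetilde\eta_0$ lies in the visit to $0$ ending at the odd index $\widetilde N_0$ and $\widetilde\eta_1$ in the visit to $1$ ending at the even index $\widetilde N_1$, so $\{\widetilde\eta_1>\widetilde\eta_0\}=\{\widetilde N_1>\widetilde N_0\}$, and similarly for the starred process. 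The inequalities $\widetilde L(0,\widetilde\tau_n)\ge L^*(0,\tau^*_n)$ and $\widetilde L(1,\widetilde\tau_n)\le L^*(1,\tau^*_n)$ then give $\widetilde N_0\le N^*_0$ and $N^*_1\le\widetilde N_1$. From $N^*_1>N^*_0$ I would deduce $\widetilde N_1\ge N^*_1>N^*_0\ge\widetilde N_0$, which is exactly \eqref{monofxi1}. For \eqref{monofxi0} I would use that the local time at $0$ is frozen during a visit to $1$, so $\widetilde L(0,\widetilde\eta_1)=\widetilde L(0,\widetilde\tau_{\widetilde N_1-1})$ and $L^*(0,\eta^*_1)=L^*(0,\tau^*_{N^*_1-1})$; monotonicity of $n\mapsto\widetilde L(0,\widetilde\tau_n)$ together with $\widetilde N_1-1\ge N^*_1-1$ and the coupling inequality then chain to $\widetilde L(0,\widetilde\eta_1)\ge\widetilde L(0,\widetilde\tau_{N^*_1-1})\ge L^*(0,\eta^*_1)$. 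Inequality \eqref{monofxi0b} is the mirror-image argument for the local time at $1$, using $\widetilde N_0-1\le N^*_0-1$.

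For the conditional bound \eqref{sper}, observe that $\{\widetilde\eta_1>\widetilde\eta_0\}=\{\widetilde L(1,\widetilde\eta_0)<\ell\}\in\widetilde{\mathcal F}_{\widetilde\eta_0}$ and $\widetilde L(0,\widetilde\eta_0)=\ell=L(1,\infty)$. Since $L(1,\infty)$ is independent of $\widetilde\X$ (Proposition~\ref{independence}), applying the strong Markov property to the Markov process $(\widetilde X_t,\widetilde L(0,t),\widetilde L(1,t))$ at $\widetilde\eta_0$ shows that, on $\{\widetilde\eta_1>\widetilde\eta_0\}$, the left-hand conditional expectation equals $G(\ell,\widetilde b)-\ell\wedge M$ with $\widetilde b:=\widetilde L(1,\widetilde\eta_0)$, where for $0\le b<a$
$$ G(a,b):=\E\big[\widehat L(0,\widehat\eta)\wedge M\big], \qquad \widehat\eta:=\inf\{t:\widehat L(1,t)=a\}, $$
and $\widehat\X$ is a two-vertex VRJP from $0$ with initial local times $(a,b)$. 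The same computation gives $G(\ell,b^*)-\ell\wedge M$ for the right-hand side with $b^*:=L^*(1,\eta^*_0)$. I would then show $b\mapsto G(a,b)$ is non-increasing by coupling two VRJPs from $0$ with local times $(a,b_1)$ and $(a,b_2)$, $b_1\le b_2$, through a common exponential family: the computation proving \eqref{monofxi0} (now with target level $a$) yields $\widehat L^{(1)}(0,\widehat\eta^{(1)})\ge\widehat L^{(2)}(0,\widehat\eta^{(2)})$ pathwise, hence $G(a,b_1)\ge G(a,b_2)$ after capping by $M$ and taking expectations. On $\{\eta^*_1>\eta^*_0\}$ both reductions are valid, since \eqref{monofxi1} forces $\{\widetilde\eta_1>\widetilde\eta_0\}$, and $\widetilde b\le b^*$ from \eqref{monofxi0b} gives $G(\ell,\widetilde b)\ge G(\ell,b^*)$, i.e. the left side of \eqref{sper} dominates the right; on the complementary event the right side vanishes while the left side is non-negative because on $\{\widetilde\eta_1>\widetilde\eta_0\}$ one has $\widetilde L(0,\widetilde\eta_1)\ge\widetilde L(0,\widetilde\eta_0)=\ell$, so the integrand is $\ge0$. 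This establishes \eqref{sper}.

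The main obstacle is the third step: justifying that both conditional expectations collapse to the single deterministic function $G$, and in particular that the auxiliary hitting times $\widehat\eta$ are a.s. finite so that $G$ is well-defined at the relevant random arguments. Finiteness is not automatic for arbitrary initial local times — in the strongly reinforced regime a two-vertex VRJP may leave one local time bounded — so I would derive it from \eqref{eq:assumpov} together with Proposition~\ref{comparison}(ii), using that, conditionally on the realized values $\widetilde b$ and $b^*$, the coupled auxiliary processes have the same law as the genuine post-$\widetilde\eta_0$ and post-$\eta^*_0$ continuations, which reach level $\ell$ almost surely because $\widetilde\eta_1,\eta^*_1<\infty$. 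The delicate bookkeeping lies in handling the externally supplied variable $L(1,\infty)$, which sits inside the filtration $\widetilde{\mathcal F}_t$ yet is independent of the trajectory, so that the memoryless restart at the stopping times can be carried out rigorously.
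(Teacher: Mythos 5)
Your proposal is correct and follows essentially the same route as the paper: finiteness from \eqref{eq:assumpov} (the paper gets $L^*(i,\infty)=\infty$ by viewing $\X^*$ as a unit-initial-weight VRJP started from $1$, you via Proposition~\ref{comparison}(ii)); the three inequalities from the discrete-index coupling of Proposition~\ref{comparison}(i) (the paper proves \eqref{monofxi1} by directly comparing $\widetilde\eta_0\le\eta^*_0$ and $\widetilde\eta_1\ge\eta^*_1$, you via the jump indices $\widetilde N_i, N^*_i$, which amounts to the same thing); and \eqref{sper} by a memoryless restart at $\widetilde\eta_0$, $\eta^*_0$ reducing both sides to one deterministic functional of the initial local times that is monotone in $\ell_1$ by the same coupling (the paper's function $\varphi$ plays the role of your $G$, with the indicator absorbed into it). The delicate points you flag --- measurability of the restart given that $L(1,\infty)$ sits in the filtration, and a.s.\ finiteness of the auxiliary hitting times --- are exactly the ones the paper handles, and your treatment of them is adequate.
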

\begin{proof}
{We first notice that the process $\X^*$ can be also interpreted as a VRJP on $\{0,1\}$ starting from 1 with initial local times one.  Using symmetry, it follows from \eqref{eq:assumpov} that $$\P(L^*(1,\infty)=L^*(0,\infty)=\infty)=1.$$
{Since $L(1,\infty)<\infty$ a.s., we thus} have that $\eta^*_i<\infty$ and $\widetilde{\eta}_i<\infty$ a.s. for all $i\in\{0,1\}$. For $i\in\{0,1\}$, set $$\widetilde{H}_i=\inf\{n\ge 0: \widetilde{L}(i, \widetilde \tau_n)\ge L(1,\infty)\}\quad \text{and}\quad H^*_i=\inf\{n\ge 0: {L}^*(i,  \tau_n^*)\ge L(1,\infty)\}.$$ 
By reason of Proposition~\ref{comparison}(i),  we have 
$\widetilde{L}(0,\widetilde \eta_1)=\widetilde{L}(0,\widetilde \tau_{\widetilde H_1}) 
\ge L^*(0,  \tau_{H^*_1}^*)=L^*(0, \eta^*_1).$
Similarly, we also have
$
\ \widetilde{L}(1,\widetilde \eta_0)=\widetilde{L}(1,\widetilde \tau_{\widetilde H_0}) 
 \le L^*(1,  \tau_{H^*_0}^*)=L^*(1, \eta^*_0).
$
Thus, \eqref{monofxi0}-\eqref{monofxi0b} are proved. On the other hand, 
$$\widetilde{\eta}_0
= L(1,\infty)+L(1,\widetilde \eta_0) -2 \,{\le}\, 
  L(1,\infty)+L^*(1,\eta^*_0)-2-A={\eta}_0^*,
$$
It is also clear that $\widetilde{\eta}_1 \,{\ge}\, \eta^*_1$. Hence,  for each $\omega \in \{\eta^*_1 > \eta^*_0\}$ we have that $\widetilde{\eta}_1(\omega) \ge {\eta}_1^*(\omega)>{\eta}_0^*(\omega) \ge \widetilde{\eta}_0(\omega)$. This verifies \eqref{monofxi1}.
}

For $t\ge0$, set $\widetilde{Y}_t:=\widetilde{X}_{t+\widetilde\eta_0}$ and $Y_t^*:=X^*_{t+\widetilde\eta_0^*}$. Given $\widetilde{\mathcal{F}}_{\widetilde\eta_0}$ and ${\mathcal{F}}^*_{\eta^*_0}$ respectively, the processes  $\widetilde{\Y}=(\widetilde{Y}_t)_{t\ge0}$ and $\Y^*=({Y}^*_t)_{t\ge0}$ are VRJP on $\{0,1\}$ with $\widetilde Y_0=Y^*_0=0$. {Notice that $\widetilde{Y}$ and $\widetilde{Y}^*$ can be reconstructed using exponential families, denoted by $\widetilde{\boldsymbol{\chi}}$ and $\boldsymbol{\chi}^*$ respectively, which are independent of their initial local times.} Denote by $(L_{\widetilde{Y}}(i,t),i\in\{0,1\})_{t\ge0}$ and $(L_{Y^*}(i,t),i\in\{0,1\})_{t\ge0}$ the processes of local times associated with $\widetilde{\Y}$ and $\Y^*$ respectively. Note that $L_{Y^*}(0,0)=L_{\widetilde Y}(0,0)=L(1,\infty)$ and $L_{Y^*}(1,0)=L^*(1,\eta^*_0)\ge L_{\widetilde Y}(0,0)=\widetilde L(1,\widetilde \eta_0)$. {We also have 
$\widetilde\eta_1-\widetilde\eta_0=\inf\{u:L_{\widetilde Y}(1,u)=L_{\widetilde Y}(0,0)\}$ and $\eta_1^*-\eta_0^*=\inf\{u:L_{ Y^*}(1,u)=L_{Y^*}(0,0)\}$. Hence, there exists a deterministic Borel measurable function $\varphi$ such that 
\begin{align*} \1_{\{\widetilde{\eta}_1>  \widetilde{\eta}_0\}} \big(\widetilde{L}(0,\widetilde \eta_1)\wedge M-L(1,\infty)\wedge M \big) & = \varphi(\widetilde{\boldsymbol{\chi}},L_{\widetilde{Y}}(0,0), L_{\widetilde{Y}}(1,0) ),\\ \1_{\{{\eta}^*_1>  {\eta}^*_0\}}\big( L^*(0,\eta_1^*)\wedge M- L(1,\infty)\wedge M \big) &= \varphi(\boldsymbol{\chi}^*,L_{Y^*}(0,0), L_{Y^*}(1,0)). \end{align*}
As a consequence of \eqref{monofxi1}-\eqref{monofxi0}, we have that  $\varphi(\widetilde{\boldsymbol{\chi}},L_{\widetilde{Y}}(0,0), L_{\widetilde{Y}}(1,0) ) \ge \varphi(\boldsymbol{\chi}^*,L_{Y^*}(0,0), L_{Y^*}(1,0))$. We also note that $\widetilde{\boldsymbol{\chi}}$ and $\boldsymbol{\chi}^*$ has the same distribution.
Hence, 
\begin{align}\label{sper0}
\nonumber  & \E\left[\1_{\{\widetilde{\eta}_1>  \widetilde{\eta}_0\}}\left(\widetilde{L}(0,\widetilde\eta_1)\wedge M-L(1,\infty)\wedge M\right)\ \big|\ \widetilde{\mathcal{F}}_{\widetilde\eta_0}\right]  \\
\nonumber & =\E\Big[\varphi(\widetilde{\boldsymbol{\chi}},L_{\widetilde{Y}}(0,0), L_{\widetilde{Y}}(1,0) )\ |\ {{L_{\widetilde{Y}}(0,0), L_{\widetilde{Y}}(1,0)}}\Big]\\ & \nonumber \ge  \E\Big[\varphi(\boldsymbol{\chi}^*,L_{Y^*}(0,0), L_{Y^*}(1,0))\ |\ {{L_{Y^*}(0,0), L_{Y^*}(1,0)}}\Big]
\\
  & = \E\left[\1_{\{{\eta}^*_1>  {\eta}^*_0\}}\Big( L^*(0,\eta^*_1)\wedge M-L(1,\infty)\wedge M\Big)
\ \big|\ {\mathcal{F}}^*_{\eta^*_0}\right].
 \end{align}}
Note that $\1_{\{\widetilde\eta_1 >\widetilde\eta_0\}}$ is $\widetilde{\mathcal{F}}_{\widetilde{\eta}_0}$-measurable as $\widetilde\eta_0$ and $\widetilde\eta_1$ are stopping times w.r.t. $(\widetilde{\mathcal{F}}_t)_{t\ge0}$. Similarly, $\1_{\{{\eta}^*_1>  {\eta}^*_0\}}$ is ${\mathcal{F}}^*_{\eta^*_0}$-measurable. Combining this fact with \eqref{sper0}, we obtain \eqref{sper}.
\end{proof}

\begin{proposition}\label{ineq.L}
{Let $\Y=(Y_t)_{t\ge0}$ be a VRJP$(\ell,w)$ extension on $\{0,1\}$ {starting from $0$} with local time process $(L_{\Y}(0,t), L_{\Y}(1,t))_{t\ge 0}$ and positive initial local times $\ell_0=a, \ell_1=b$. Let $\E^{a,b}$ be the expected value associated with $\Y$ and set $$\xi(t) = \inf\{u \colon {L}_{\Y}(1, u) =t\}.$$ For any $a>b>1$, there exists a positive constant $\rho(a,b)$ depending on $a$ and $b$ such that}
\begin{equation}\label{s1s01}
 \E^{a, b}[ L_{\Y}(0, \xi(a))] - a = a -b+\rho(a,b).
\end{equation}
\end{proposition}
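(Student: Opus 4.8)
The plan is to establish the claimed identity with $\rho(a,b):=\E^{a,b}[L_{\Y}(0,\xi(a))]-2a+b$, so that everything reduces to the strict inequality $\E^{a,b}[L_{\Y}(0,\xi(a))]>2a-b$; equivalently, the expected increment of the local time at $0$ up to time $\xi(a)$ strictly exceeds $a-b$, which is the \emph{deterministic} increment $L_{\Y}(1,\xi(a))-L_{\Y}(1,0)=a-b$ of the local time at $1$. The conceptual reason is that $\Y$ starts at $0$ with $\ell_0=a>b=\ell_1$, and vertex $0$ keeps a strict lead: since $L_{\Y}(0,\cdot)$ is non-decreasing from $a$ while $L_{\Y}(1,\cdot)\le a$ on $[0,\xi(a)]$, we have $L_{\Y}(0,t)>a\ge L_{\Y}(1,t)$ for all $t\in(0,\xi(a)]$. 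As $w$ is strictly increasing, the vertex carrying the larger local time is abandoned faster, so $L_{\Y}(0,\cdot)$ accumulates local time at a strictly larger rate than $L_{\Y}(1,\cdot)$; integrating this advantage produces the positive surplus $\rho(a,b)$.

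First I would pass to the natural clock $v=L_{\Y}(1,\cdot)$, which runs from $b$ to $a$, and track $\Psi(v):=L_{\Y}(0,\cdot)$ evaluated at the instant $L_{\Y}(1,\cdot)=v$. On the two-vertex graph, using the canonical construction, $L_{\Y}(0,\cdot)$ is frozen while the process sits at $1$ (i.e. while $v$ increases) and grows only during visits to $0$ (while $v$ is frozen); hence $\Psi$ is a non-decreasing pure-jump Markov process in the variable $v$. Its initial value is $\Psi(b+)=a+\mathcal{E}_0$, where $\mathcal{E}_0$ is exponential with mean $1/w(b)$ (the first sojourn at $0$, where the jump rate is $w(\ell_1)=w(b)$); at level $\Psi(v)$ it jumps at rate $w(\Psi(v))$ (the rate of jumping from $1$ to $0$), and a jump occurring at clock value $v$ has size exponential with mean $1/w(v)$ (the increment of $L_{\Y}(0,\cdot)$ during the ensuing sojourn at $0$, whose jump rate is $w(v)$). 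Since $L_{\Y}(0,\xi(a))=\Psi(a)$, Dynkin's formula applied to the identity function gives
\[
\E^{a,b}[L_{\Y}(0,\xi(a))]=a+\frac{1}{w(b)}+\int_b^a\frac{\E[w(\Psi(v))]}{w(v)}\,\rmd v .
\]

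To finish I would invoke the lead established above: $\Psi(v)\ge\Psi(b+)>a\ge v$ for every $v\in(b,a]$, so strict monotonicity of $w$ yields $\E[w(\Psi(v))]>w(a)\ge w(v)>0$, whence $\E[w(\Psi(v))]/w(v)>w(a)/w(v)\ge 1$. Integrating and using $\int_b^a\rmd v/w(v)\ge(a-b)/w(a)$ gives $\int_b^a\E[w(\Psi(v))]/w(v)\,\rmd v>a-b$, so that
\[
\E^{a,b}[L_{\Y}(0,\xi(a))]-a>(a-b)+\frac{1}{w(b)},
\]
i.e. $\rho(a,b)\ge 1/w(b)>0$. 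It is exactly here that strict monotonicity of $w$ enters (a merely non-decreasing $w$ would only give $\rho\ge 0$), in agreement with the remark following Theorem~\ref{th:main}.

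The main obstacle is the justification of the Dynkin/Fubini step and, with it, the finiteness of $\E^{a,b}[L_{\Y}(0,\xi(a))]$, equivalently of the number $K$ of sojourns at $1$ before the clock reaches $a$. Writing $\Psi(a)=a+\sum_{k=1}^{K}\mathcal{E}_k$ with $\mathcal{E}_k$ the $k$-th increment of $L_{\Y}(0,\cdot)$ — exponential of rate $w(c_{k-1})\ge w(b)$, where $c_{k-1}\in[b,a]$ is the value of $L_{\Y}(1,\cdot)$ during the $k$-th sojourn at $0$ — the $\mathcal{E}_k$ are stochastically dominated by independent exponentials of mean $1/w(b)$, so a Wald/optional-stopping argument gives $\E^{a,b}[L_{\Y}(0,\xi(a))]\le a+\E[K]/w(b)$; it then suffices to check $\xi(a)<\infty$ a.s. and $\E[K]<\infty$. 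Both hold in the regime in which the proposition is applied, namely when the two-vertex process has both local times a.s. unbounded (cf. \eqref{eq:assumpov}), which forces $\xi(a)<\infty$; $\E[K]<\infty$ then follows from a tail estimate on the partial sums of the $L_{\Y}(1,\cdot)$-increments. Since all the displayed inequalities are lower bounds, the essential conclusion $\rho(a,b)>0$ is in any case robust, and only its interpretation as a finite constant uses the integrability input. A fully equivalent route avoids the clock: one applies optional stopping at $\xi(a)$ to the finite-variation process $\mathbf{Z}$ of the martingale decomposition (Proposition~\ref{pr:deco}), which here is a genuine submartingale because its increasing part accrues only during visits to $1$, at the non-negative rate $w(L_{\Y}(0,\cdot))^{-1}\bigl(-\rmd\, w(L_{\Y}(1,\cdot))^{-1}\bigr)$; from $\E[Z_{\xi(a)}]\ge Z_0$ and bounding the resulting $1/w$-integrals by $w(a)$ one recovers $\E^{a,b}[L_{\Y}(0,\xi(a))]-a>a-b$.
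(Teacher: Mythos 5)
Your proposal is correct and takes essentially the same route as the paper: reparametrize by the local time at $1$, derive the integral identity $\E^{a,b}[L_{\Y}(0,\xi(t))]=\mathrm{const}+\int_b^t \E^{a,b}[w(L_{\Y}(0,\xi(u)))]/w(u)\,\rmd u$ via the same infinitesimal (Bernoulli jump times exponential sojourn) computation, and lower-bound the integrand using $L_{\Y}(0,\xi(u))>a\ge u$ together with strict monotonicity of $w$. The only differences are refinements rather than a new method: you retain the contribution $1/w(b)$ of the initial sojourn at $0$ (which the paper's displayed formula for $m_{a,b}$ omits, harmlessly, since it only enlarges $\rho(a,b)$) and you discuss the integrability/finiteness issues that the paper defers to the truncation at $M$ in its application.
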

\begin{proof}
{ For $a>b >1$ and $t \ge b$, we first claim that
\begin{align}\label{eq.mean}m_{a,b}(t):= \E^{a,b}[{L}_{\Y}(0, \xi(t))] = a  + \int_{b}^t \frac{\E^{a,b}[w({L}_{\Y}(0, \xi(u)))]}{w(u)} \rmd u.\end{align}
Indeed, we have  ${L}_{\Y}(0, \xi(t+\rmd t))={L}_{\Y}(0, \xi(t))+\nu \psi$, where $\nu$ is $\text{Bernoulli}\left(w({L}_{\Y}(0, \xi(t)))\rmd t\right)$ and $\psi$ is $\text{exponential}(w(t))$, and $\nu$ and $\psi$ are independent of each other given ${L}_{\Y}(0, \xi(t))$. Hence,
\begin{align*}m_{a,b}(t+dt)-m_{a,b}(t)=\E^{a,b}\left[\E^{a,b}\left[\nu\psi\ \big|\ {L}_{\Y}(0, \xi(t))\right]\right]&=\E^{a,b}\left[ \E^{a,b}[\psi]w({L}_{\Y}(0, \xi(t)))\rmd t\right]
\\
& = \frac{\E^{a,b}[w({L}_{\Y}(0, \xi(t)))]}{w(t)}\rmd t,
\end{align*}
yielding \eqref{eq.mean}.
Denote by $\P^{a,b}$ the probability measure associated with $\Y$. Since $Y_0=0$, we must have ${L}_{\Y}(0, \xi(u))> {L}_{\Y}(0, \xi(b)) = a$, $\P^{a,b}$-a.s., for all $u >b$. Hence, for $u \in (b, a)$ we obtain that $w({L}_{\Y}(0, \xi(u)))> w(a) > w(u)$, $\P^{a,b}$-a.s.. As a result,
$$\rho(a,b):= \int_{b}^a\frac{\E^{a,b}[w({L}_{\Y}(0, \xi(u)))-w(u)]}{w(u)}{\rmd u}>0$$
and thus
$$  \E^{a,b}[{L}_{\Y}(0, \xi(a))] - a  = m_{a,b}(a){-a}=  \int_{b}^a \frac{\E^{a,b}[w({L}_{\Y}(0, \xi(u)))]}{w(u)} \rmd u = a -b+\rho(a,b).$$
}
\end{proof}

\begin{proof}[Proof of Theorem~\ref{th:notransient}] {In the following, we focus on the extension of VRJP$(w)$ on $\Z^+$. We prove that this process cannot be transient almost surely. This implies that also the extension on $\Z^-$ cannot be transient. Hence, by the restriction principle, the result extends to VRJP$(w)$ on $\Z$. We  reason  by  contradiction and for simplicity denote by $\X$ a VRJP on $\Z^+$}. 

Suppose $\P(\X\text{ is transient})>0$. By virtue of Propositions \ref{prop:no1},  \ref{prop:no2} and  \ref{prop:no3} and Remark~ \ref{rem:no4}, we have that \eqref{eq:assump} is satisfied and that
 \begin{equation}\label{eq:asslt}
 \P(\widetilde{L}(1,\infty)=\widetilde{L}(0,\infty)=\infty)=1 \qquad \mbox{ and }   \qquad  \P\left( L(1,\infty)<\infty\right)=1.
 \end{equation}
 For $i \in \{0,1\}$, set  
$$
S_i = L(i, \infty)\wedge M,
$$ 
where $M>0$ is a large deterministic parameter that we will choose later. {We are going to prove that assuming \eqref{eq:assump} and \eqref{eq:asslt} there exists a large deterministic $M>0$ such that }
\begin{equation}\label{eq:contr0}
\E[ S_0]> \E[S_1].
\end{equation}
{As $S_0$ and $S_1$ are identically distributed and bounded,  \eqref{eq:contr0} yields a contradiction.

The rest of this section is devoted to prove  \eqref{eq:contr0}, and we assume throughout that \eqref{eq:assump} and \eqref{eq:asslt} hold.} 

{Let $\P^{(rand)}$ and $\E^{(rand)}$ be respectively the probability measure and the expectation associated with the canonical process extension on $\{0,1\}$, with initial local times one, but randomized starting point, i.e.\ either $0$ or $1$ with probability $1/2$.}
{Recall from Proposition~\ref{prop.eta} that for each $i\in\{0,1\}$, $\widetilde{\eta_i}<\infty$, ${\eta^*_i}<\infty$ a.s. and note that $\P^{(rand)}(\widetilde{\eta}_0 < \widetilde{\eta}_1) = \frac12$ due to symmetry.} 

By the restriction principle, $\widetilde\eta_1$ is {equal} to the last time that $\X$ {stays} at $1$, i.e. $\widetilde\eta_1=\sup\{t: \X_t=1 \}.$ 
As a result, 
\begin{equation}
\label{L0eta1}
L(0,\infty)= \widetilde{L}(0,\widetilde{\eta}_1).
\end{equation}
We also notice from Proposition \ref{independence} that $S_1=L(1,\infty)\wedge M$ is independent of the filtrations $(\widetilde{\mathcal{F}}_t)_{t\ge0}$ and $({\mathcal{F}}^*_t)_{t\ge0}$. Using \eqref{sper}, \eqref{L0eta1}, Proposition \ref{comparison}(ii) and the fact that $\{\widetilde\eta_1 >\widetilde\eta_0\}\in \widetilde{\mathcal{F}}_{\widetilde{\eta}_0}$ (as $\widetilde\eta_0$ and $\widetilde\eta_1$ are stopping times w.r.t. $(\widetilde{\mathcal{F}}_t)_{t\ge0}$), we thus get
$${
\begin{aligned}
 \E\left[( S_0 -   S_1)\1_{\{\widetilde{\eta}_1> \widetilde{\eta}_0\}}\right] &= \E\left[\E\left[ (S_0 -   S_1)\1_{\{\widetilde{\eta}_1> \widetilde{\eta}_0\}} \ \big|\  \widetilde{\mathcal{F}}_{\widetilde{\eta}_0}\right]\right]\\
 & \stackrel{\eqref{L0eta1}}{=}  \E\left[\E\left[ \widetilde{L}(0, \widetilde{\eta}_1)\wedge M -   S_1\ \big|\ \widetilde{\mathcal{F}}_{\widetilde{\eta}_0}\right]\1_{\{\widetilde{\eta}_1> \widetilde{\eta}_0\}}\right]\\
 & \stackrel{\eqref{sper}}{\ge} \E\left[\E\left[ L^*(0, \eta^*_1)\wedge M -   S_1\ \big|\  {\mathcal{F}}^*_{{\eta}^*_0}\right]\1_{\{\eta^*_1> \eta^*_0\}} \right]\\
  &= \E^{1,1+\exp}\left[\E\left[ \widetilde{L}(0, \widetilde{\eta}_1)\wedge M  -   S_1 \ \big|\  \widetilde{\mathcal{F}}_{\widetilde{\eta}_0} \right]\1_{\{\widetilde{\eta}_1> \widetilde{\eta}_0\}}\right]. 
 \end{aligned}
 }$$
 {To obtain the last identity, we use the fact (from  Proposition~\ref{comparison}(ii)) that $\widetilde{\X}$ under the probability measure $\P^{1,1+\exp}$ has the same law as $\X^*$ under $\P$.}
 Hence,
{\begin{equation}\label{Eiq1}
 \E[( S_0 -   S_1)\1_{\{\widetilde{\eta}_1> \widetilde{\eta}_0\}}] \ge \E^{\ssup{rand}}\left[\E\left[  \widetilde{L}(0, \widetilde{\eta}_1)\wedge M -   S_1\ \big|\  \widetilde{\mathcal{F}}_{\widetilde{\eta}_0}  \right]\1_{\{\widetilde{\eta}_1> \widetilde{\eta}_0\}}\right].
\end{equation} }
We also have 
$${
\begin{aligned} 
\E[(S_0 - S_1)\1_{\{\widetilde{\eta}_1< \widetilde{\eta}_0\}}] & \stackrel{\eqref{L0eta1}  }{=}   - \E[(S_1-\widetilde{L}(0, \widetilde{\eta}_1)\wedge M ) \1_{\{\widetilde{\eta}_1< \widetilde{\eta}_0\}}] \\
& \stackrel{\eqref{monofxi0}+\eqref{monofxi1}}{\ge} - \E[(S_1-L^*(0, \eta^*_1) \wedge M )\1_{\{\eta^*_1< \eta^*_0\}}]\\
& \stackrel{\rm\tiny Prop.~\ref{comparison}(ii)}{=} - \E^{1,1+ \exp}[(S_1-\widetilde{L}(0, \widetilde{\eta}_1) \wedge M)\1_{\{\widetilde{\eta}_1< \widetilde{\eta}_0\}}]
\end{aligned}
}$$
and thus
\begin{align}\label{Eiq2}
\nonumber \E[(S_0 - S_1)\1_{\{\widetilde{\eta}_1< \widetilde{\eta}_0\}}] 
 & \ge -\E^{\ssup{rand}}[(S_1-\widetilde{L}(0, \widetilde{\eta}_1) \wedge M )\1_{\{\widetilde{\eta}_1< \widetilde{\eta}_0\}}] \\
 &=  -\E^{\ssup{rand}}[(S_1-\widetilde{L}(1, \widetilde{\eta}_0) \wedge M)\1_{\{\widetilde{\eta}_1> \widetilde{\eta}_0\}}]
 \end{align}
where the second equality is a consequence of symmetry.

By the definition of $\widetilde\eta_1$ and $\widetilde\eta_0$, we have $\P(\widetilde\eta_1=\widetilde\eta_0<\infty)=0$. Combining (\ref{Eiq1}) and (\ref{Eiq2}), we obtain
{\begin{equation}\label{s1s0}
\E[(S_0 - S_1)] \ge \E^{\ssup{rand}}\left[\left(\E[ \widetilde{L}(0, \widetilde{\eta}_1)\wedge M -   S_1\ \big|\  \widetilde{\mathcal{F}}_{\widetilde{\eta}_0}] -  S_1 + \widetilde{L}(1, \widetilde{\eta}_0)\wedge M \right)\1_{\{\widetilde{\eta}_1> \widetilde{\eta}_0\}}\right].
\end{equation}
}
{We are now going to apply Proposition~\ref{ineq.L} to the process $\Y=({Y}_{t})_{t\ge0}$ defined by ${Y}_{t}=\widetilde{X}_{t+\widetilde \eta_0}$ for all $t\ge0$ and $a:=L(1,\infty)=\widetilde L(0,\widetilde\eta_0)=\widetilde L(1,\widetilde \eta_1), b:=\widetilde{L}(1,\widetilde{\eta_{0}})$ to complete the proof. Notice that on the event $\{\widetilde{\eta}_1> \widetilde{\eta}_0\}\in \widetilde{\mathcal{F}}_{\widetilde \eta_0}$, we have $a>b$ and
\begin{align}\label{eq.trunc} \nonumber
  & \E[ \widetilde{L}(0, \widetilde{\eta}_1)\wedge M -   S_1 \big|  \widetilde{\mathcal{F}}_{\widetilde{\eta}_0}] -  S_1+ \widetilde{L}(1, \widetilde{\eta}_0)\wedge M\\ & = \E^{a, b}[{L_{\Y}(0, \xi(a))\wedge M}- a\wedge M]-(a\wedge M-b\wedge M)
\end{align}
in which we note that $\xi(a):=\inf\{u: L_{\Y}(1,u)=a\}= \inf\{u: \widetilde{L}(1,u+\widetilde\eta_0)=L(1,\infty)\}=\widetilde{\eta_1}-\widetilde\eta_0$ and that $L_{\Y}(0,\widetilde{\eta_1}-\widetilde\eta_0 )=\widetilde{L}(0,\widetilde{\eta_1})$.}

{By virtue of Proposition~\ref{ineq.L}, there exists a positive constant $\rho(a,b)$ (depending only on $a$ and $b$) such that
$$\E^{a,b}[{L}_{\Y}(0, \xi(a))] - a= a-b  +\rho(a,b).$$}
On the other hand, by the monotone convergence theorem, $\E^{a, b}[L_{\Y}(0, \xi(a))\wedge M]-a\wedge M$ converges to $\E^{a, b}[L_{\Y}(0, \xi(a))]-a$ as $M\to\infty$. Hence, using $a>b$
\begin{equation}\label{truncated}
 \E^{a, b}[L_{\Y}(0, \xi(a))\wedge M] - a\wedge M =: a-b+\rho_M(a,b)\stackrel{(a>b)}{\ge} a\wedge M -b\wedge M+\rho_M(a,b),
\end{equation}
where $(\rho_M(a,b))_{M \ge0}$ is a increasing sequence  such that $\rho_ M(a,b)\uparrow \rho(a,b)$ as $M\to\infty$. 
 We can choose $M$ and $N$ large enough such that 
\begin{align}\label{prob.eta}
\P^{(rand)}\left(\widetilde{\eta}_1>\widetilde{\eta}_0, \rho_M(a,b)>\frac{1}{N}\right)>\frac{1}{2}\P^{(rand)}\left(\widetilde{\eta}_1> \widetilde{\eta}_0\right)\stackrel{\rm (symmetry)}{=}\frac{1}{4}.
\end{align}
Combining (\ref{s1s0}) with \eqref{eq.trunc}, \eqref{truncated} and \eqref{prob.eta}, we get
$$\E[S_0-S_1]\ge \E^{(rand)}\left[\rho_M(a,b) \1_{\left\{\widetilde \eta_1>\widetilde \eta_0,\ \rho_M(a,b)>\frac{1}{N}\right\} }\right] > \frac{1}{4N}>0$$
and this proves  \eqref{eq:contr0}.
\end{proof} 

\section{A martingale approach to VRJP on two vertices}

\subsection{Some preliminary classical results for finite variation processes}
In this subsection, we briefly recall some useful results for c\`adl\`ag finite variation processes and martingales (see e.g. \cite{Fima, Protter2004}) that is needed in our analysis. {\bf The reader can skip the section and refer to it when the results are recalled.} 

Recall that a process is said to have \textit{finite variation} if it has bounded variation on each finite time interval with probability one. Let ${\bf \Theta}=(\Theta_t)_{t\ge0}$ be a c\`adl\`ag finite variation processes. For each $t\ge0$, let $\Theta_{t-}=\lim_{s\uparrow  t}\Theta_s$ and $\Delta \Theta_t=\Theta_t-\Theta_{t-}$ be respectively the left limit and the size of the jump of ${\bf \Theta}$ at time $t$. We will make use of the following results throughout this section:

\noindent{\bf 1. (Integration by parts formula)} Let ${\bf \Theta}=(\Theta_t)_{t\ge0}$ and ${\bf \Upsilon}=(\Upsilon_t)_{t\ge0}$ be two c\`adl\`ag finite variation processes in $\mathbb{R}$. Then for $0 \le s \le t$,
\begin{align}\label{ibpart}
\Theta_t\Upsilon_t-\Theta_s\Upsilon_s=\int_s^t \Theta_{u-}\text{d}\Upsilon_u+\int_s^t \Upsilon_{u-}\text{d}\Theta_u+[\Theta,\Upsilon]_t-[\Theta,\Upsilon]_s,
\end{align}
where the integrals are in the sense of Lebesgue-Stieltjes and we denote by
 $[{\bf \Theta},{\bf \Upsilon}]$ the \textit{covariation} of ${\bf \Theta}$ and ${\bf \Upsilon}$, given by $$[\Theta,\Upsilon]_t=\sum_{0<u\le t}\Delta \Theta_u \Delta \Upsilon_u.$$ It immediately follows that
\begin{align}\label{changvar}
\Theta_t^2-\Theta_s^2=2\int_s^t \Theta_{u-}\text{d}\Theta_u+[\Theta]_t-[\Theta]_s,
\end{align}
where $[{\bf \Theta}]=[{\bf \Theta},{\bf \Theta}]$ stands for the \textit{quadratic variation} of ${\bf \Theta}$, i.e. covariation of ${\bf \Theta}$ and itself.

%
%

\noindent{\bf 2. (Angle bracket)} Let ${\bf M}=(M_t)_{t\ge0}$ be a c\`adl\`ag square-integrable local martingale with finite variation. We denote by $\langle {\bf M}\rangle$ the \textit{angle bracket} of ${\bf M}$, i.e. the unique right-continuous predictable increasing process such that $\langle { M}\rangle_0=0$ and ${\bf M}^2-\langle {\bf M}\rangle$ is a local martingale. Note that $[{\bf M}]-\langle {\bf M}\rangle$ is also a local martingale. As a consequence, if ${\bf M}$ is a (true) martingale w.r.t the filtration $(\mathcal{F}_t)_{t\ge0}$ then
\begin{equation}\label{isometry}
\E[(M_t-M_s)^2|\mathcal{F}_s]=\E[M^2_t-M^2_s|\mathcal{F}_s]=\E[[M]_t-[M]_s|\mathcal{F}_s]=\E[\langle M\rangle_t-\langle M \rangle_s|\mathcal{F}_s]
\end{equation}
for all $t\ge s\ge 0$. Also note that {\bf M} is a martingale if for all $t\ge0$, $\E([M]_t)<\infty$. 

Let ${\bf H}$ be a locally bounded  $(\mathcal{F}_t)_{t\ge0}$-predictable process and denote by ${\bf H}\cdot {\bf M}$ the c\`adl\`ag local square-integrable martingale with finite variation defined by $(H\cdot M)_t=\int_0^t H_{s} \text{d}M_s$. Recall the following identities
\begin{equation}\label{M.integrak}
\langle H\cdot M\rangle_t=\int_0^t H^2_{s}\text{d}\langle M\rangle_s \quad \text{ and }\quad [H\cdot M]_t=\int_0^t H^2_{s}\text{d}[M]_s.\end{equation}
Recall also that ${\bf H}\cdot {\bf M}$ is a square integrable martingale if and only if for all $t>0$, $\E[\langle H\cdot M\rangle_t]<\infty$.

\noindent{\bf 3.} {\bf (Doob's maximal inequality)}, Let $T\ge t\ge 0$, $p\ge 1$ and ${\bf M}=(M_t)_{t\ge0}$ be a c\`adl\`ag martingale adapted to a filtration $(\mathcal{F}_t)_{t \ge 0}$ such that $\E[|M_T|^p  ]<\infty$. Then
\begin{equation}\label{doob.ineq}
\mathbb{P}\left[  \sup_{t \le s \le T} |M_s |\ge \lambda  \big| \mathcal{F}_t\right] \le \frac{\mathbb{E} [|M_T|^p  \big| \mathcal{F}_t ]}{\lambda^p}.\end{equation}

 \vspace{1cm}

In the next subsections we focus on the extension $\widetilde\X=(\widetilde X_t)_{t\ge0}$ of the VRJP$(w)$ $\X$ to $\{0, 1\}$. We assume that  $w$ satisfies Assumption W.

\subsection{A martingale decomposition for VRJP}
  
For $i\in \{0,1\}$ and $t\ge 0$, set $$W(i,t):=w(\widetilde{L}(i,t)),$$ where $\left(\widetilde{L}(i,t),i\in\{0,1\}\right)_{t\in[0,\infty)}$ is the local time process of the extension  $\widetilde\X$. For each $t\ge0$, we also define 
\begin{equation*}
H_t:=\int_{1}^{\widetilde{L}(0,t)}\frac{1}{w(u)}\text{d}u-
\int_{1}^{\widetilde{L}(1,t)}\frac{1}{w(u)}\text{d}u.
\end{equation*} 
Let $\mathbf{M}=(M_t)_{t\ge0}$ and $\mathbf{A}=(A_t)_{t\ge0}$ be processes given by
\begin{align}\label{eq:defofM} M_t& :=\1_{\{\widetilde X_t=1\}}-\int_{0}^t\left(W(1,u)\1_{\{\widetilde X_u=0\}}-W(0,u)\1_{\{\widetilde X_u=1\}}\right) \rmd u,\\
A_t& :=-\frac{1}{W(0,t) W(1,t)}.
\end{align}
It is clear $\mathbf{A}$ is a bounded non-decreasing c\`adl\`ag process. On the other hand, we have the following result which is a consequence of Lemma 4.1 in \cite{OM2018}. For the sake of completeness,  we include its proof in the Appendix.   
\begin{lemma}\label{martingale} $\mathbf{M}$ is a c\`adl\`ag martingale with finite variation. Furthermore, its angle bracket process is given by \begin{equation}\label{eq:anglebracket}\langle M \rangle_t=\int_0^t \Lambda_u\rmd u,\end{equation}
where for each $t\ge0$,
$$\Lambda_t:=\1_{\{\widetilde X_t=0\}}W(1,t)+\1_{\{\widetilde X_t=1\}}W(0,t).$$
\end{lemma}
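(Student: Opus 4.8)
The plan is to recognize $M$ as the compensated indicator process of a pure-jump Markov process and to identify $\langle M\rangle$ as the predictable compensator of $[M]$. Write $N_t := \1_{\{\widetilde X_t=1\}}$, so that $M_t = N_t - \int_0^t\big(W(1,u)\1_{\{\widetilde X_u=0\}} - W(0,u)\1_{\{\widetilde X_u=1\}}\big)\,\rmd u$. First I would record that on the two-vertex set $\{0,1\}$ the process is non-explosive: on any interval $[0,T]$ the local times satisfy $\widetilde L(i,u)\le 1+u\le 1+T$, hence both jump rates $W(0,u),W(1,u)$ are bounded by the deterministic constant $w(1+T)$, so the jump times of $\widetilde\X$ are dominated by those of a rate-$w(1+T)$ Poisson process and are a.s.\ finite in number on $[0,T]$. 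In particular $N$ is a genuine c\`adl\`ag pure-jump process with finitely many jumps on compacts, and since the compensator term is absolutely continuous, $M$ is c\`adl\`ag of finite variation.

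Next, the martingale property. The cleanest route is Dynkin's formula for the piecewise-deterministic Markov process $(\widetilde X_t,\widetilde L(0,t),\widetilde L(1,t))$ applied to the function $(x,\ell_0,\ell_1)\mapsto \1_{\{x=1\}}$: the deterministic flow of the local times leaves this function unchanged, while the jump part gives generator $W(1,\cdot)\1_{\{x=0\}}-W(0,\cdot)\1_{\{x=1\}}$, since from $0$ one jumps to $1$ at rate $w(\widetilde L(1,\cdot))=W(1,\cdot)$ and from $1$ one jumps to $0$ at rate $w(\widetilde L(0,\cdot))=W(0,\cdot)$. Equivalently, the infinitesimal description \eqref{eq:smint} together with the memoryless property gives $\E[N_{t+h}-N_t\mid\mathcal F_t]=\big(W(1,t)\1_{\{\widetilde X_t=0\}}-W(0,t)\1_{\{\widetilde X_t=1\}}\big)h+o(h)$, which upon integration over $[s,t]$ (using the uniform bound $W(i,u)\le w(1+t)$ to control the $o(h)$ errors) yields $\E[M_t-M_s\mid\mathcal F_s]=0$. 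To upgrade $M$ from a local to a true martingale I would note that $\Delta M_u=\Delta N_u\in\{-1,+1\}$ at each jump, so $[M]_t=\sum_{u\le t}(\Delta N_u)^2=J_t$, the total number of jumps in $(0,t]$; since $J_t$ is stochastically dominated by a $\mathrm{Poisson}(t\,w(1+t))$ variable, $\E[[M]_t]<\infty$, and the criterion recalled in Section~4.1 gives that $M$ is a square-integrable martingale.

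For the angle bracket I would use that, by definition and by the fact that $[M]-\langle M\rangle$ is a local martingale, $\langle M\rangle$ is precisely the predictable compensator of $[M]_t=J_t$. Applying the same compensation argument to the counting process $J_t$ of all jumps—whose total instantaneous rate is $W(1,t)$ when at $0$ and $W(0,t)$ when at $1$, that is, exactly $\Lambda_t=\1_{\{\widetilde X_t=0\}}W(1,t)+\1_{\{\widetilde X_t=1\}}W(0,t)$—shows that $J_t-\int_0^t\Lambda_u\,\rmd u$ is a local martingale. Since $\int_0^t\Lambda_u\,\rmd u$ is continuous (hence predictable), increasing, and vanishes at $0$, uniqueness of the Doob--Meyer compensator forces $\langle M\rangle_t=\int_0^t\Lambda_u\,\rmd u$, which is \eqref{eq:anglebracket}.

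The main obstacle I expect is the rigorous justification of the two compensator identities: passing from the $o(h)$ description supplied by \eqref{eq:smint} to the integrated (local) martingale statements requires controlling the error terms uniformly in $u$, and this is exactly where the deterministic rate bound $W(i,u)\le w(1+t)$ on $[0,t]$ and the Poisson domination of the jump count do the work. Everything else reduces to bookkeeping with the finite-variation calculus recalled at the beginning of the section.
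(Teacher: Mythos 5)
Your proposal is correct, and the first half (the martingale property) follows essentially the same route as the paper: the paper also starts from the infinitesimal identity $\E[\1_{\{\widetilde X_{t+h}=1\}}-\1_{\{\widetilde X_t=1\}}\mid\mathcal F_t]=\Pi(t)h+o(h)$ with $\Pi(t)=\1_{\{\widetilde X_t=0\}}W(1,t)-\1_{\{\widetilde X_t=1\}}W(0,t)$, and integrates it over $[s,t]$ via a Riemann-sum/dominated-convergence argument; your deterministic rate bound $W(i,u)\le w(1+t)$ is exactly the control that makes the $n\cdot o((t-s)/n)$ error vanish, and since everything is bounded on compacts the true (not just local) martingale property comes for free, so your extra upgrade step via $\E[[M]_t]<\infty$ is harmless but not needed. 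For the angle bracket you take a genuinely different and arguably more standard route: you identify $[M]_t=\sum_{u\le t}(\Delta M_u)^2$ with the jump-counting process $J_t$, compensate $J_t$ by its total intensity $\int_0^t\Lambda_u\,\rmd u$, and invoke uniqueness of the predictable (Doob--Meyer) compensator together with the fact that $[M]-\langle M\rangle$ is a local martingale. The paper instead computes $M_t^2$ head-on with the integration-by-parts formula \eqref{changvar}, writing $M_t^2=Q_t+\int_0^t\Pi(s)(1-2\1_{\{\widetilde X_s=1\}})\,\rmd s$ with $Q_t$ an explicit local martingale, and observes that $\Pi(s)(1-2\1_{\{\widetilde X_s=1\}})=\Lambda_s$. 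Your compensator argument is shorter and exposes the probabilistic meaning of $\Lambda$ as the total jump rate, at the cost of invoking the uniqueness theorem and a second infinitesimal-to-integrated passage for $J_t$; the paper's computation is self-contained, relying only on the finite-variation calculus already recalled in Section~4.1. Both are complete proofs.
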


\begin{proposition}\label{pr:deco} For each $t\ge 0$, we have the following decomposition 
\begin{equation}\label{Ht}
H_t=\frac{\1_{\{\widetilde X_t=1 \}}}{W(0,t)W(1,t)} 
+\int_0^t \1_{\{\widetilde X_{u-}=1 \}}{\rm d}A_u  -\int_0^t \frac{{\rm d}M_u}{W(0,u-) W(1,u-)}. 
\end{equation}
\end{proposition}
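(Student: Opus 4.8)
The plan is to read off \eqref{Ht} from the integration-by-parts formula \eqref{ibpart}. Set $J_t := \1_{\{\widetilde X_t=1\}}$ and $U_t := -A_t = \frac{1}{W(0,t)W(1,t)}$. Since $\mathbf A$ is a bounded non-decreasing c\`adl\`ag process (as noted just above), $U=-\mathbf A$ is a bounded, non-increasing c\`adl\`ag process of finite variation; and $J$ takes values in $\{0,1\}$, jumps only at the (discrete, by non-explosion) jump times of $\widetilde\X$, hence is c\`adl\`ag of finite variation. Thus \eqref{ibpart} applies to the product $J_t U_t$.

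Two observations are needed. First, since $\widetilde L(0,\cdot)$ increases at unit rate exactly when $\widetilde X=0$ and $\widetilde L(1,\cdot)$ exactly when $\widetilde X=1$, the process $H$ is continuous with $H_0=0$ (because $\widetilde L(0,0)=\widetilde L(1,0)=1$) and
$$\frac{\rmd H_t}{\rmd t}=\frac{\1_{\{\widetilde X_t=0\}}}{W(0,t)}-\frac{\1_{\{\widetilde X_t=1\}}}{W(1,t)}.$$
Second, the definition \eqref{eq:defofM} of $\mathbf M$ gives $\rmd J_u=\rmd M_u+\big(W(1,u)\1_{\{\widetilde X_u=0\}}-W(0,u)\1_{\{\widetilde X_u=1\}}\big)\rmd u$.

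Applying \eqref{ibpart} on $[0,t]$, whose boundary term $J_0U_0$ vanishes since $\widetilde X_0=0$, and replacing $U_{u-}$ by $U_u$ in the Lebesgue (drift) integral since they differ on a countable set only, I obtain
$$J_tU_t=\int_0^t J_{u-}\,\rmd U_u+\int_0^t U_{u-}\,\rmd M_u+\int_0^t U_u\big(W(1,u)\1_{\{\widetilde X_u=0\}}-W(0,u)\1_{\{\widetilde X_u=1\}}\big)\rmd u+[J,U]_t.$$
Because $U_uW(1,u)=1/W(0,u)$ and $U_uW(0,u)=1/W(1,u)$, the third integral equals $H_t$ by the first observation. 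Substituting $J_{u-}=\1_{\{\widetilde X_{u-}=1\}}$ and $U=-\mathbf A$ and rearranging, \eqref{Ht} follows provided $[J,U]_t=0$ almost surely.

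Proving $[J,U]_t=0$ is the only delicate step, and the main obstacle. As $[J,U]_t=\sum_{0<u\le t}\Delta J_u\,\Delta U_u$, a nonzero contribution requires a time at which both $J$ and $U$ jump. The process $J$ jumps only at the jump times $\widetilde\tau_n$ of $\widetilde\X$; at such a time only the local time of the vertex just vacated is increasing, the other being frozen across $\widetilde\tau_n$, so $U=1/(W(0,\cdot)W(1,\cdot))$ can jump at $\widetilde\tau_n$ only if $w$ is discontinuous at $\widetilde L(0,\widetilde\tau_n)$ or at $\widetilde L(1,\widetilde\tau_n)$. During the $n$-th sojourn the jump rate to the opposite vertex is frozen, so that sojourn's duration is, conditionally on the past, exponentially distributed; consequently the accrued local time $\widetilde L(\cdot,\widetilde\tau_n)$ at the vacated vertex has a non-atomic conditional law and almost surely avoids the countable discontinuity set of $w$. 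A union bound over the countably many $n$ shows that almost surely $\Delta U_{\widetilde\tau_n}=0$ for every $n$, whence $[J,U]_t\equiv0$ and \eqref{Ht} holds.
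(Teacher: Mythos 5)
Your proposal is correct and follows essentially the same route as the paper: integration by parts applied to the product of $\1_{\{\widetilde X_\cdot=1\}}$ and $-A_\cdot$, identification of the drift term with $H_t$ via the martingale decomposition \eqref{eq:defofM}, and the observation that the covariation vanishes because the local times at jump times have non-atomic laws and therefore almost surely avoid the countable discontinuity set of $w$. Your justification of that last point (conditionally exponential sojourn times) is in fact slightly more explicit than the paper's.
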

\begin{proof}
Notice that 
$$H_t=\int_{0}^{t}\frac{\1_{\{\widetilde  X_u=0 \}}}{W(0,u)}\text{d}u-\int_{0}^{t}\frac{\1_{\{ \widetilde X_u=1\}}}{W(1,u)}\text{d}u=\int_0^t\frac{\Lambda_u \text{d}u }{W(0,u)W(1,u)},$$
where the first equality is derived by a simple time change. {Applying the integration by parts (formula (\ref{ibpart})) to the processes $\mathbf{A}=(A_t)_{t\ge0}$ and $\mathbf{I}=(\1_{\{\widetilde X_{t}=1\}})_{t\ge0}$, we have
\begin{align*}\int_0^t \1_{\{\widetilde X_{u-}=1\}} \rmd A_u & = -\frac{\1_{\{\widetilde X_{t}=1\}}}{W(0,t)W(1,t)} +\int_0^t \frac{\rmd \big(\1_{\{\widetilde X_{u}=1\}}\big) }{W(0,u-)W(1,u-)}-[\mathbf{I},\mathbf{A}]_t,\\
& = -\frac{\1_{\{\widetilde X_{t}=1\}}}{W(0,t)W(1,t)} +\int_0^t \frac{\rmd M_u }{W(0,u-)W(1,u-)}+\int_0^t\frac{\Lambda_u\text{d}u}{W(0,u)W(1,u)}-[\mathbf{I},\mathbf{A}]_t,
\end{align*}
where in the last step we used \eqref{eq:defofM} and the definition of the process $\Lambda$. Hence,
$$H_t=\frac{\1_{\{\widetilde X_t=1 \}}}{W(0,t)W(1,t)} 
+\int_0^t \1_{\{\widetilde X_{u-}=1 \}}{\rm d}A_u  -\int_0^t \frac{{\rm d}M_u}{W(0,u-) W(1,u-)}+[\mathbf{I},\mathbf{A}]_t.$$
We finish the proof by showing that} $[\mathbf{I},\mathbf{A}]\equiv 0$. Indeed, if for some $t>0$, 
$$\Delta I_t \Delta A_t=\Delta 1_{\{\widetilde X_t=1\}}\Delta\left(\frac{-1}{w(\widetilde{L}(0,t))w(\widetilde{L}(1,t))}\right)\neq 0,$$
 then $t=\tau$, which is a jump time of $(\widetilde X_t)_{t\ge0}$. However, the distributions of $\widetilde L(0,\tau)$ and $\widetilde L(1,\tau)$ are absolutely continuous with respect to the Lebesgue measure. Thus, $\widetilde L(0,\tau)$ and $\widetilde L(1,\tau)$  have probability zero to take value in the set of discontinuous points of $w$ (which is at most countable). Hence, $\Delta A_{\tau}=0$ a.s.. This contradiction implies that $\Delta I_t \Delta A_t=0$ a.s. for all $t>0$ and thus $[\mathbf{I},\mathbf{A}]\equiv 0$.   
\end{proof} 

\subsection{Weakly VRJP on two vertices}
The following result is well-known in the case of discrete-time martingales (see e.g. \cite{Durrett2010}). We are interested in the continuous time version, and for the sake of completeness we include a proof in the Appendix.

\begin{lemma}\label{boundedjump} Let $(N_t)_{t\ge 0}$ be a c\`adl\`ag martingale with uniformly bounded jumps, i.e. there exits a non-random constant $C<\infty$ such that for all $t>0$, $|\Delta N_t|\le C$. Then
$$\P\left(\left\{\lim_{t\to\infty}N_t \ \text{exists and is finite}\right\} \cup \left\{ \liminf_{t\to\infty}N_t=-\infty\ \text{and}\   \limsup_{t\to\infty} N_t=+\infty  \right\}  \right)=1.$$
\end{lemma}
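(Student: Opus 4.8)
The plan is to prove the continuous-time martingale dichotomy by reducing it to the well-known discrete-time statement. I would first extract a discrete-time martingale from $(N_t)$ by sampling at integer times: set $\mathcal{G}_n = \mathcal{F}_n$ and $N^{(d)}_n = N_n$. By the optional stopping / tower property, $(N^{(d)}_n)_{n\in\N}$ is a discrete-time martingale. The subtle point is that the discrete skeleton alone does not control the fluctuations \emph{between} integer times, so the bounded-jump hypothesis must be used to bridge the gap. The classical discrete result (the martingale analogue of the ``limit exists or oscillates maximally'' theorem; see \cite{Durrett2010}) states precisely that for a discrete-time martingale with bounded increments, almost surely either $\lim_n N^{(d)}_n$ exists and is finite, or $\liminf_n N^{(d)}_n = -\infty$ and $\limsup_n N^{(d)}_n = +\infty$.

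The key step is to transfer both the convergence alternative and the oscillation alternative from the integer skeleton to the full continuous-time process. For the convergence alternative, I would argue that on the event where $\lim_n N^{(d)}_n =: N_\infty$ exists and is finite, the continuous trajectory also converges. To see this, fix $\eps > 0$ and apply Doob's maximal inequality \eqref{doob.ineq} (with $p=2$, say) to the martingale $(N_{n+s} - N_n)_{0 \le s \le 1}$ on each unit interval $[n, n+1]$: this controls $\sup_{0 \le s \le 1} |N_{n+s} - N_n|$ in probability, and combined with a Borel--Cantelli argument along a subsequence it forces $\sup_{n \le t \le n+1}|N_t - N_n| \to 0$ almost surely. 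Since $N_n \to N_\infty$, the full process converges to the same limit. For the oscillation alternative, no interpolation estimate is even needed: if $\liminf_n N^{(d)}_n = -\infty$ and $\limsup_n N^{(d)}_n = +\infty$ along the integer times, then a fortiori $\liminf_{t\to\infty} N_t = -\infty$ and $\limsup_{t\to\infty} N_t = +\infty$, because the continuous trajectory passes through (indeed equals) the skeleton values $N_n$.

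The two events from the discrete dichotomy are complementary up to a null set, and I have shown that each is contained (up to null sets) in the corresponding continuous-time event. Since the continuous-time events in the statement are themselves disjoint, this yields
$$
\P\left(\Bl\lim_{t\to\infty}N_t \ \text{exists and is finite}\Br \cup \Bl \liminf_{t\to\infty}N_t=-\infty,\ \limsup_{t\to\infty} N_t=+\infty \Br\right)=1,
$$
which is the claim.

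The main obstacle is the continuous-interpolation step: I must rule out the possibility that $(N_t)$ makes large excursions strictly between integer times even while the skeleton converges. The bounded-jump hypothesis $|\Delta N_t| \le C$ guarantees the process has no single catastrophic jump, but large \emph{oscillations} built from many small jumps within a unit interval are precisely what Doob's inequality controls: the quantity $\E[(N_{n+1}-N_n)^2 \mid \mathcal{F}_n] = \E[\langle N\rangle_{n+1} - \langle N\rangle_n \mid \mathcal{F}_n]$ (via \eqref{isometry}) is summable on the convergence event, so the maximal inequality makes the within-interval suprema vanish almost surely. Care is needed to justify the summability of these conditional second moments on the (random) convergence event; one clean route is to localize via the stopping times $T_K = \inf\{t : |N_t| \ge K\}$, run the argument on each $\{T_K = \infty\}$ where the martingale is bounded hence $L^2$-bounded, and then take $K \to \infty$, noting that on the convergence event some $T_K$ is infinite.
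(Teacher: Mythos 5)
There is a genuine gap at the very first step of your reduction. The hypothesis $|\Delta N_t|\le C$ bounds the individual jumps of the c\`adl\`ag path, but it does \emph{not} bound the increments $N_{n+1}-N_n$ of the integer skeleton: over a unit time interval the process can accumulate arbitrarily many jumps of size at most $C$ and move an arbitrarily large distance (think of a compensated Poisson process run at rate $\lambda_n$ on $[n,n+1]$, whose increment is typically of order $\sqrt{\lambda_n}$ and has unbounded support). Consequently the discrete-time dichotomy you invoke, which requires uniformly bounded increments, simply does not apply to $(N_n)_{n\in\N}$; and if the ``activity'' grows fast enough even the weaker hypothesis $\E\left[\sup_n|N_{n+1}-N_n|\right]<\infty$ of the generalized discrete theorem can fail. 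The transfer step back to continuous time is also shakier than you suggest: the localization via $T_K=\inf\{t:|N_t|\ge K\}$ is circular, because the inclusion $\{\text{skeleton converges}\}\subseteq\bigcup_K\{T_K=\infty\}$ (no large excursions strictly between integer times) is essentially what you are trying to prove; moreover the identity $\E[(N_{n+1}-N_n)^2\mid\mathcal{F}_n]=\E[\langle N\rangle_{n+1}-\langle N\rangle_n\mid\mathcal{F}_n]$ presupposes a local square-integrability that is not among the hypotheses of the lemma.

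The paper's proof avoids discretization altogether and is the argument you should aim for. With $\tau_n=\inf\{t: N_t\le -n\}$, the bounded-jump hypothesis gives the \emph{pathwise} bound $N_{t\wedge\tau_n}\ge N_{(t\wedge\tau_n)-}-C\ge -n-C$, so $N_{t\wedge\tau_n}+n+C$ is a nonnegative martingale and converges a.s.; hence $\{\liminf_{t\to\infty}N_t>-\infty\}=\bigcup_n\{\tau_n=\infty\}$ is contained, up to a null set, in the event that $\lim_{t\to\infty}N_t$ exists and is finite, and applying the same to $-N_t$ finishes the proof. Note that the one-sided stopping is exactly where $|\Delta N_t|\le C$ enters: it controls the overshoot below the level $-n$ at the single stopping instant, which is precisely the control that sampling at integer times destroys.
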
 

Next result shows that the weakly VRJP$(w)$  defined on $\{0,1\}$ spends an infinite amount of time in both vertices. 
\begin{proposition}\label{VRJP2vert} {Under Assumption W and the further assumption  that} $$\int_1^{\infty}\frac{{\rm d}t}{w(t)}=\infty,$$ 
 we have that  $\widetilde{L}(0,\infty)=\widetilde{L}(1,\infty)=\infty$ {almost surely.}
\end{proposition}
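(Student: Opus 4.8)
The plan is to study the auxiliary finite-variation process
\[
Z_t := H_t - \frac{\1_{\{\widetilde X_t = 1\}}}{W(0,t)W(1,t)}
= \int_1^{\widetilde{L}(0,t)} \frac{\rmd u}{w(u)} - \int_1^{\widetilde{L}(1,t)} \frac{\rmd u}{w(u)} - \frac{\1_{\{\widetilde X_t=1\}}}{W(0,t)W(1,t)},
\]
whose asymptotics I will read off from the martingale decomposition of Proposition~\ref{pr:deco}. Subtracting the boundary term from that decomposition gives
\[
Z_t = \int_0^t \1_{\{\widetilde X_{u-}=1\}}\, \rmd A_u \;-\; \int_0^t \frac{\rmd M_u}{W(0,u-)W(1,u-)} =: V_t + N_t,
\]
with $V_t := \int_0^t \1_{\{\widetilde X_{u-}=1\}}\rmd A_u$ and $N_t := -\int_0^t \frac{\rmd M_u}{W(0,u-)W(1,u-)}$. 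First I would note that $V_t$ converges a.s.\ to a finite limit: since $W(i,t) \ge w(1) = 1$, the process $A_t = -1/(W(0,t)W(1,t))$ is non-decreasing with values in $[-1,0)$, so $V_t$ is non-decreasing and bounded above by $A_\infty - A_0 \le 1$.

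Next I would verify that $N_t$ is a genuine square-integrable martingale with uniformly bounded jumps. Because the drift part of $M$ is continuous, $M$ can only jump by $\pm 1$ (the jumps of $\1_{\{\widetilde X_t=1\}}$), so $|\Delta N_u| = |\Delta M_u|/(W(0,u-)W(1,u-)) \le 1$. Using \eqref{M.integrak} together with Lemma~\ref{martingale},
\[
\langle N\rangle_t = \int_0^t \frac{\Lambda_u}{W(0,u-)^2 W(1,u-)^2}\, \rmd u \le t,
\]
since on $\{\widetilde X_u = 0\}$ the integrand equals $1/(W(0,u)^2 W(1,u)) \le 1$ and symmetrically on $\{\widetilde X_u = 1\}$; here the left limits $W(i,u-)$ agree with $W(i,u)$ off the countable (hence $\rmd u$-null) set of jump times. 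Thus $\E[\langle N\rangle_t] < \infty$, so $N$ is a true martingale, and Lemma~\ref{boundedjump} applies: almost surely either $N_t$ converges to a finite limit, or $\liminf_t N_t = -\infty$ and $\limsup_t N_t = +\infty$. Adding the convergent process $V_t$, the identical dichotomy holds for $Z_t$; in particular $\P(Z_t \to +\infty) = \P(Z_t \to -\infty) = 0$.

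The last step is to rule out a finite local time. Since the VRJP on the finite graph $\{0,1\}$ is non-explosive and $\widetilde{L}(0,t) + \widetilde{L}(1,t) = \ell_0 + \ell_1 + t \to \infty$, at least one local time is a.s.\ infinite. On $\{\widetilde{L}(1,\infty) < \infty\}$ we therefore have $\widetilde{L}(0,\infty) = \infty$, so the weak-reinforcement hypothesis $\int_1^\infty \rmd u/w(u) = \infty$ forces $\int_1^{\widetilde{L}(0,t)} \rmd u/w(u) \to \infty$, while the middle integral stays bounded and the last term of $Z_t$ lies in $[0,1]$; hence $Z_t \to +\infty$. Symmetrically, $\{\widetilde{L}(0,\infty) < \infty\}$ gives $Z_t \to -\infty$. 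Both monotone divergences have probability zero by the preceding paragraph, so $\P(\widetilde{L}(0,\infty) \wedge \widetilde{L}(1,\infty) < \infty) = 0$, which is the claim.

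I expect the main obstacle to be the technical bookkeeping in the middle paragraph, namely upgrading $N$ from a local to a true martingale (so that Lemma~\ref{boundedjump} is legitimately available) and checking that the left-continuous integrands $W(i,u-)$ are harmless. Once $\langle N\rangle_t \le t$ is established, everything else is driven by the elementary bound $W \ge 1$ and the divergence $\int_1^\infty \rmd u/w(u) = \infty$.
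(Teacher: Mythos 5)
Your proof is correct and follows essentially the same route as the paper: the same decomposition coming from Proposition~\ref{pr:deco} into a bounded finite-variation part plus the martingale $\int_0^t \frac{\rmd M_u}{W(0,u-)W(1,u-)}$, the same convergence/oscillation dichotomy from Lemma~\ref{boundedjump}, and the same final contradiction with monotone divergence of $Z_t$ (equivalently $H_t$) on the event that one local time stays bounded. The only harmless differences are that you fold the boundary term $\1_{\{\widetilde X_t=1\}}/(W(0,t)W(1,t))$ into $Z_t$ rather than into the bounded remainder $R_t$, and you certify the true-martingale property via $\langle N\rangle_t\le t$ instead of the paper's bound $[\widetilde M]_t\le [M]_t$.
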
 
\begin{proof}
Recall  from Proposition~\ref{pr:deco} that we can decompose the process $(H_t)_{t\ge0}$ as follows 
\begin{align*}H_t &=  \frac{\1_{\{\widetilde X_t=1 \}}}{W(0,t)W(1,t)} +\int_0^t \1_{\{\widetilde X_{u-}=1 \}}\text{d}A_u - \int_0^t \frac{\text{d}M_u}{W(0,u-) W(1,u-)}\\
&= R_t-\widetilde{M}_t,
\end{align*}
where $$R_t:=\frac{\1_{\{\widetilde X_t=1 \}}}{W(0,t)W(1,t)} +\int_0^t \1_{\{\widetilde X_{u-}=1 \}}\text{d}A_u, \quad \widetilde{M}_t:=\int_0^t \frac{\text{d}M_u}{W(0,u-) W(1,u-)}.$$
{Note that $\sup_{t\ge 0}|R_t|<\infty$ almost surely since} 
$$ \left|\int_0^\infty \1_{\{\widetilde X_{u-}=1 \}}\text{d}A_u \right|\le A_{\infty}-A_0=\frac{1}{W(0,0)W(1,0)}=1.$$  On the other hand, as $W(i,t) \ge 1$ for $i\in\{0,1\}$ and for all $t\ge 0$, we have that  $$[\widetilde{M}]_t \stackrel{\eqref{M.integrak}}{=}\int_0^t\frac{1}{W(0,u-)^2W(1,u-)^2} \rmd [M]_u \le [M]_t<\infty$$ for each $t>0$ and $(\widetilde{M}_t)_{t\ge0}$ is thus a martingale. We also have $$|\Delta\widetilde{M}_t|=\frac{|\1_{\{\widetilde X_t=1\}}-\1_{\{\widetilde X_t-=1\}}|}{W(0,t-) W(1,t-)}\le 1.$$ 
Therefore, we have from Lemma~\ref{boundedjump} that either $(\widetilde M_{t})_{t\ge0}$ converges to a finite limit or it oscillates in the sense that $\liminf_{t\to\infty}\widetilde M_t=-\infty$ and $\limsup_{t\to\infty}\widetilde M_t=+\infty$.
Furthermore, \begin{align*}\left\{\widetilde{L}(0,\infty)\wedge\widetilde{L}(1,\infty)<\infty\right\}&\subseteq \left\{ \lim_{t\to\infty}H_t= +\infty\right\}\cup \left\{ \lim_{t\to\infty}H_t=-\infty\right\}\\& = \left\{ \lim_{t\to\infty}\widetilde M_t=+\infty \right\}\cup\left\{ \lim_{t\to\infty}\widetilde M_t=-\infty \right\}.\end{align*}
If  the process $(\widetilde M_t)_t$ converges, it cannot converge to neither $+ \infty$ nor $-\infty$,  i.e.
$$\P\left\{ \lim_{t\to\infty}\widetilde M_t=+\infty\right\} = \P\left\{ \lim_{t\to\infty}\widetilde M_t=-\infty\right\}=0,$$
which implies that 
$ \P\left(\widetilde{L}(0,\infty)\wedge\widetilde{L}(1,\infty)<\infty \right)=0,$
which in turn concludes the proof.
\end{proof}

\subsection{A non-convergence theorem: a general result.}

In this subsection, we prove a non-convergence theorem applicable to a general class of c\`adl\`ag finite variation processes. This is inspired by  techniques developed by Bena\"im and Raimond for continuous-time martingales in \cite{BR2005} and by Limic and Tarr\`es for discrete-time martingales in \cite{LT2007, LT2008}. Some of the notation used in this Section overlaps with the one from other Sections. This will hint on how we are going to apply these general results to VRJP.
\begin{definition}\label{def:good}
Consider a process  $\mathbf{Z} = (Z_t)_{t\ge0}$ and denote by $(\Fcal_t)_{t\ge0}$ its natural filtration. The process $\mathbf{Z}$ is {\bf good} if it is a c\`adl\`ag, with finite variation and can be decomposed as 
$$Z_t=Z_0+ \int_0^t F_{u-} \rmd A_u +\int_0^t G_{u-} \rmd M_u,$$
where $(F_t)_{t\ge0}, (G_t)_{t\ge0}, (A_t)_{t\ge0}, (M_t)_{t\ge0}$ are $(\mathcal{F}_t)_{t\ge0}$-adapted c\`adl\`ag finite variation stochastic processes on $\mathbb{R}$ and satisfy the following.
\begin{itemize} 
\item[(i)] $(M_t)_{t\ge0}$ is a martingale w.r.t. $(\mathcal{F}_t)_{t\ge0}$ such that $\langle M\rangle_t=\int_0^t \Lambda_u \rmd u$ where $(\Lambda_t)_{t\ge0}$ is a positive $(\mathcal{F}_t)_{t\ge0}$-adapted process.
\item[(ii)] $(A_t)_{t\ge0}$ can be decomposed as  $A_t=A_t^+-A_t^-$, for all $t\ge 0$, where $(A^+_t)_{t\ge0}$ and $(A^{-}_t)_{t\ge0}$ are $(\mathcal{F}_t)_{t\ge0}$-adapted and increasing in $t$.  Denote by $({\rm Va}_t(A))_{t\ge0}$ the variation process of $(A_t)_{t\ge0}$ which is given by ${\rm Va}_t(A):= A_t^+ +A_t^-$.
\item[(iii)] For each $t>0$, $\Delta A_t \Delta M_t =0$ almost surely.
\item[(iv)] There is a deterministic constant $K$ such that $$\P\left( \max\left\{|Z_t|,\int_0^t G_u^2\Lambda_u \rmd u \right\}\le K \text{ for all } t\ge0\right)=1.$$
 \end{itemize}
\end{definition}

In what follows, $\varrho$ is a fixed arbitrary real number and $(\Gamma_t)_{t\ge0}$ is a family of events such that for each $t\ge0$, $\Gamma_t\in \mathcal{F}_t$ and 
\begin{equation}\label{intersection}\bigcap_{s\ge t}\Gamma_s=\bigcap_{s\in[t,\infty)\cap \mathbb Q}\Gamma_s.\end{equation}
Furthermore, we assume that
\begin{equation}\label{def.Gamma}\Gamma:=\bigcup_{n=1}^{\infty}\bigcap_{t\ge n}\Gamma_t\subseteq \left\{\lim_{t\to\infty}Z_t=\varrho\right\}.\end{equation}
We are going to apply later on Theorem~\ref{nonconvergence} below to different choices of $(\Gamma_t)_{t\ge0}$ related to strongly VRJP on $\{0,1\}$.
\begin{theorem} \label{nonconvergence}  Suppose that $\mathbf{Z}$ is good and there exist two positive non-increasing $(\mathcal{F}_t)_{t\ge0}$-adapted processes $(\alpha_t)_{t\ge0}$, $(\beta_t)_{t\ge0}$ such that
\begin{equation}\alpha_t=\int_t^{\infty} \widetilde{\alpha}_u\rmd u \quad \text{for some}\ (\mathcal{F}_t)_{t\ge0}\text{-adapted process}\ (\widetilde{\alpha}_t)_{t\ge0},\end{equation} 
\begin{equation}\label{jumpsize}
\max\left\{|\Delta Z _t|, \int_{t}^{\infty} | F_{u-}|  {
 \rm d}{\rm Va}_u(A) \right\}  \le \beta_t
 \end{equation} 
for all $t\ge0$ and
\begin{equation}\label{lim.alpbeta}
\lim_{t \ti} \frac{\beta_t}{\sqrt{\alpha_t}}= 0.
\end{equation}
Furthermore, assume that there exists a deterministic constant $\kappa\in[1,\infty)$ such that for each $t\ge0$,   
\begin{equation}
\label{quadvar} \frac{1}{\kappa} \widetilde{\alpha}_t \le  G^2_{t} \Lambda_{t}  \le \kappa \widetilde{\alpha}_t\quad \text{on the event $\Gamma_t$.}
\end{equation} 
Then   $$\P\left(\Gamma\right)=0.$$
\end{theorem}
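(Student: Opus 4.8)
The plan is to show that on $\Gamma$ the terminal value of the martingale part of $\mathbf Z$ is too spread out, at the scale $\sqrt{\alpha_t}$, for $Z_t$ to converge to the single deterministic value $\varrho$; the contradiction is then extracted via L\'evy's zero--one law. First note that $\alpha_t=\int_t^\infty\widetilde\alpha_u\rmd u$ is finite and non-increasing, hence $\alpha_t\to0$ and, by \eqref{lim.alpbeta}, $\beta_t=(\beta_t/\sqrt{\alpha_t})\sqrt{\alpha_t}\to0$ as well. Write $N_t:=\int_0^t G_{u-}\rmd M_u$. By (i) and (iv), $\langle N\rangle_t=\int_0^tG_u^2\Lambda_u\rmd u\le K$, so $N$ is an $L^2$-bounded martingale and converges a.s. to some $N_\infty$; moreover $\int_0^tF_{u-}\rmd A_u$ converges absolutely because its tail variation is at most $\beta_t\to0$ by \eqref{jumpsize}. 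Consequently, on $\{Z_\infty=\varrho\}$ one has $N_\infty-N_t=(\varrho-Z_t)-\int_t^\infty F_{u-}\rmd A_u$ for every $t$, and since $|\int_t^\infty F_{u-}\rmd A_u|\le\beta_t$, the event $\{Z_\infty=\varrho\}$ is contained in $\{N_\infty-N_t\in I_t\}$, where $I_t:=[\varrho-Z_t-\beta_t,\ \varrho-Z_t+\beta_t]$ is an $\mathcal F_t$-measurable interval of width $2\beta_t$. As $\Gamma\subseteq\{Z_\infty=\varrho\}$ by \eqref{def.Gamma}, it suffices to show that the conditional mass of $N_\infty-N_t$ on any window of width $2\beta_t$ is negligible.

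\textbf{Anti-concentration (the crux).} Fix $t$ and, using \eqref{intersection}, introduce the stopping time $\rho_t:=\inf\{u\ge t:\ \omega\notin\Gamma_u\}$ and the stopped martingale $N^{\rho_t}$; on $\bigcap_{u\ge t}\Gamma_u$ one has $\rho_t=\infty$ and $N^{\rho_t}=N$. On $[t,\rho_t)$ the estimate \eqref{quadvar} applies, so the predictable bracket of $N^{\rho_t}_\infty-N_t$ is comparable, up to the factor $\kappa$, to $\int_t^\infty\widetilde\alpha_u\rmd u=\alpha_t$, while \eqref{jumpsize} bounds its jumps by $\beta_t$. For such a martingale I would control the conditional characteristic function $\E[e^{\mathrm i\xi(N_\infty-N_t)}\mid\mathcal F_t]$ through the associated Dol\'eans exponential supermartingale, obtaining a Gaussian-type bound of the form $\exp(-c\,\xi^2\alpha_t)$ for $|\xi|\lesssim1/\beta_t$; inserting this into Esseen's concentration (smoothing) inequality yields
\[ \theta_t:=\sup_{x\in\R}\P\big(N_\infty-N_t\in[x,x+2\beta_t]\ \big|\ \mathcal F_t\big)\ \le\ C\,\frac{\beta_t}{\sqrt{\alpha_t}}\qquad\text{on }\ \bigcap_{u\ge t}\Gamma_u, \]
which tends to $0$ by \eqref{lim.alpbeta}.

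\textbf{Conclusion.} Put $f_t:=\P(\Gamma\mid\mathcal F_t)$. By the reduction and the anti-concentration bound, on $\bigcap_{u\ge t}\Gamma_u$ we have $f_t\le\P(N_\infty-N_t\in I_t\mid\mathcal F_t)\le\theta_t$. By L\'evy's zero--one law, $f_t\to\1_\Gamma$ a.s. On $\Gamma$ there is a (random) $n$ with $\bigcap_{u\ge n}\Gamma_u$ in force, so $f_t\le\theta_t\to0$ along $t\ge n$; but simultaneously $f_t\to1$ on $\Gamma$. This contradiction is possible only if $\P(\Gamma)=0$.

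\textbf{Main obstacle.} The entire weight of the argument sits in the second step: converting ``conditional variance $\asymp\alpha_t$ with jumps bounded by $\beta_t=o(\sqrt{\alpha_t})$'' into the small-ball bound $\theta_t\to0$. Two points demand care: (a) the localization via $\rho_t$, so that \eqref{quadvar} --- valid only on $\Gamma_t$ --- can legitimately be used to bound the bracket of the \emph{future} increment $N_\infty-N_t$; and (b) the characteristic-function (equivalently, Lindeberg-type conditional CLT) estimate for a martingale whose increments are small but far from independent. It is precisely for this step that the hypotheses $\alpha_t=\int_t^\infty\widetilde\alpha_u\rmd u$, the two-sided comparison \eqref{quadvar}, and the scale separation \eqref{lim.alpbeta} are tailored.
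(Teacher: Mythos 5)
Your reduction (the containment of $\Gamma$ in $\{N_\infty-N_t\in I_t\}$ with $I_t$ an $\mathcal F_t$-measurable window of width $2\beta_t$) and your final Lévy zero--one law step are both fine, but the crux of your argument --- the anti-concentration bound $\theta_t\le C\beta_t/\sqrt{\alpha_t}$ --- is false for martingales, and no characteristic-function/Esseen route will deliver it. Knowing that the predictable bracket of $N_\infty-N_t$ is comparable to $\alpha_t$ and that jumps are $O(\beta_t)$ does \emph{not} force the terminal value to spread out at scale $\sqrt{\alpha_t}$: unlike sums of independent increments, a martingale's later increments may adaptively cancel the earlier ones. Concretely, take $N$ continuous (so $\beta_t=0$ as far as jumps are concerned), realized as a time-changed Brownian motion $N_s=B_{\tau_s}$ with $\dot\tau_s$ kept pointwise in $[\widetilde\alpha_s/\kappa,\kappa\widetilde\alpha_s]$ (so \eqref{quadvar} holds with $\Gamma_t=\Omega$), but modulated adaptively so that the total internal time $\tau_\infty-\tau_t$ equals $\inf\{u\ge \alpha_t/\kappa: B_u=0\}\wedge\kappa\alpha_t$. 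Then $N_\infty-N_t=0$ with probability bounded away from zero: the terminal value has an atom precisely at the centre of your window (since on $\Gamma$ one has $Z_t\to\varrho$, i.e.\ $I_t$ is centred near $0$). The technical reason your Dol\'eans-exponential idea cannot work is that the identity $\E[e^{\mathrm i\xi(N_\infty-N_t)+\frac{\xi^2}{2}(\langle N\rangle_\infty-\langle N\rangle_t)}\mid\mathcal F_t]=1$ gives no \emph{upper} bound on $|\E[e^{\mathrm i\xi(N_\infty-N_t)}\mid\mathcal F_t]|$ unless the bracket is deterministic; exponential supermartingales yield concentration (upper tails), never anti-concentration.

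What is actually true --- and is all the theorem needs --- is the much weaker statement that the process escapes a tube of radius $\asymp\sqrt{\alpha_s}$ with conditional probability bounded \emph{below}, not with probability tending to one. This is how the paper proceeds: a second-moment comparison (upper bound $\E[Z_{S_t\wedge U_t}^2\mid\mathcal F_t]\le 2(\beta_t^2+8\kappa\alpha_t)\P(S_t\wedge U_t<\infty\mid\mathcal F_t)$ against the lower bound $\frac1\kappa\alpha_t\P(S_t\wedge U_t=\infty\mid\mathcal F_t)-O(\sqrt{\alpha_t}\beta_t)$ coming from the bracket) shows $\P(S_t\wedge U_t<\infty\mid\mathcal F_t)\ge\gamma_t$ with $\gamma_\infty>0$; then, once $|Z_{S_t}|>\sqrt{8\kappa\alpha_{S_t}}$, Doob's $L^2$ maximal inequality applied to the remaining martingale (whose bracket is at most $\kappa\alpha_{S_t}$) shows that with conditional probability at least $1/2$ the process never returns to $0$, so $\Gamma$ fails or $U_t<\infty$; finally Lévy's zero--one law upgrades the resulting inequality $\1_{\Gamma^c}\ge\frac12(\gamma_\infty-\1_{\Gamma^c})$ to $\P(\Gamma)=0$. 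Your scheme skips the second (Doob) stage entirely by demanding a quantitative small-ball estimate that the hypotheses do not, and cannot, supply; as written the proof does not go through.
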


{Roughly speaking, the theorem states that the event $\Gamma$ cannot happen if the tail of the finite variation integral term decreases to $0$ faster than the tail of the martingale integral term. We are interested in the case when $\Gamma$ is an event which implies (or is equivalent to) the convergence of the process $\mathbf Z$ to a random variable having atoms. This type of non-convergence theorem has considerable application to stochastic approximation algorithms (see \cite{B1999}) and random processes with reinforcement (see for instance \cite{B1997, BR2005, LT2007, LT2008, OM2018}).}

For any fixed $\varrho$, we can consider the process $(Z_t - \varrho)_t$. This process is good if $\mathbf{Z}$ is. Hence, it is enough to prove the Theorem for $\varrho =0$. Moreover,   we are going to prove Theorem~\ref{nonconvergence} replacing  $\Gamma_t$ with $\widetilde{\Gamma}_t=\Gamma_t\cap\{\sqrt{\alpha_t}\ge 2\beta_t\}$. 

In fact, if we prove Theorem~\ref{nonconvergence} with $(\widetilde{\Gamma}_t)_t$, then we would prove that 
$$\P\left(\bigcup_{n=1}^{\infty}\bigcap_{t\ge n}\widetilde{\Gamma}_t\right)=0.$$ 
On the other hand, 
$$\bigcup_{n=1}^{\infty}\bigcap_{t\ge n}\widetilde{\Gamma}_t=\Gamma\cap\{\exists T: \sqrt{\alpha_t}\ge 2\beta_t\ \forall t\ge T\}.$$
Hence,  using \eqref{lim.alpbeta}, we must have $\P(\Gamma)=0$. 

In order to prove Theorem~\ref{nonconvergence} we need some preliminary results. To simplify the notation, we drop the tilde and use $\Gamma$.
To summarise, from now on we work under the assumption $\varrho=0$ and assume that 
\begin{equation}\label{alpbeta}\sqrt{\alpha_t}\ge 2\beta_t,
\end{equation}
holds on $\Gamma_t$.

For each $t\ge0$, we define the following stopping times 
\begin{align}
\label{defn:St}S_t&=\inf\left\{s\ge t : |Z_s|> \sqrt{8\kappa\alpha_s} \right\},\\
\label{defn:Ut}
U_t & =\inf\left\{s\in [t, \infty): \1_{\Gamma_s}=0\right\}.
\end{align}
{Using \eqref{intersection}, we notice that $U_t$ is well-defined and measurable}. On the other hand, {for any $s\ge t$ we have 
\begin{equation}\label{eq:inclu} \{s<U_t\}\cap \{S_t<U_t\}  \subseteq \{S_t<\infty\}\cap \Gamma_s.
\end{equation}
}
{We first demonstrate that given $\mathcal{F}_t$, the {probability that} there exists a time $s\in[t,\infty)$ when $\Gamma_s$ does not occur or $Z_s\notin[-\sqrt{8\kappa \alpha_s},\sqrt{8\kappa \alpha_s}]$ is uniformly larger than 0. Note that by  \eqref{quadvar}, on the event $\Gamma_t$, the quantity $\alpha_t$ is comparable to the angle bracket of the tail of the martingale integral term.}

\begin{proposition}\label{lem1} Under the assumptions of Theorem~\ref{nonconvergence}, there exists a $(\mathcal{F}_t)_{t\ge0}$-adapted process $(\gamma_t)_{t\ge0}$ such that $\lim_{t\to\infty} \gamma_t=\gamma_{\infty}>0$ a.s. and
$$\mathbb{P}\left(S_t\wedge U_t<\infty|\mathcal{F}_t\right)\ge \gamma_t$$ for all $t>0$.
\end{proposition}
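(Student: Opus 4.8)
The plan is to bound $\P(S_t\wedge U_t=\infty\mid\mathcal{F}_t)$ away from $1$ on the event $\Gamma_t\cap\{|Z_t|\le\sqrt{8\kappa\alpha_t}\}$; on its complement either $U_t=t$ or $S_t=t$, so the conditional probability is $1$ and any $\gamma_t\le1$ is automatic. Write $\tau=S_t\wedge U_t$ and let $N_s=\int_0^s G_{u-}\rmd M_u$, so $\langle N\rangle_s=\int_0^s G_u^2\Lambda_u\,\rmd u$ by \eqref{M.integrak}. Introduce the stopped, recentred martingale $M^*_s:=N_{\tau\wedge s}-N_t$ for $s\ge t$. Since $\Delta A\,\Delta M\equiv0$ (property (iii)), every jump of $N$ is a jump of $\mathbf Z$, so by \eqref{jumpsize} the jumps of $M^*$ are bounded by $\beta_t$, and combining the band constraint with \eqref{jumpsize} gives $\sup_s|M^*_s|\le R:=2\sqrt{8\kappa\alpha_t}+2\beta_t$. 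Thus $M^*$ is a bounded martingale, hence converges, with $\E[M^*_\infty\mid\mathcal{F}_t]=0$ and the isometry $\E[(M^*_\infty)^2\mid\mathcal{F}_t]=\E[\langle M^*\rangle_\infty\mid\mathcal{F}_t]$ from \eqref{isometry}.

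Two structural facts drive the argument. First, $\alpha$ is non-increasing with $\alpha_t=\int_t^\infty\widetilde\alpha_u\,\rmd u<\infty$, so $\alpha_s\downarrow0$; hence on $\{\tau=\infty\}$ the shrinking band forces $|Z_s|\le\sqrt{8\kappa\alpha_s}\to0$, and therefore $M^*_\infty=\lim_s\bigl(Z_s-Z_t-\int_t^sF_{u-}\rmd A_u\bigr)=-Z_t-\int_t^\infty F_{u-}\rmd A_u$ is \emph{pinned} to within $\beta_t$ of $-Z_t$, again by \eqref{jumpsize}. Second, on $\{\tau=\infty\}$ we have $U_t=\infty$, so $\Gamma_s$ holds for all $s\ge t$ (using \eqref{intersection}), and \eqref{quadvar} yields $\langle M^*\rangle_\infty=\int_t^\infty G_u^2\Lambda_u\,\rmd u\ge\kappa^{-1}\int_t^\infty\widetilde\alpha_u\,\rmd u=\alpha_t/\kappa$. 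Writing $p:=\P(\tau=\infty\mid\mathcal{F}_t)$, I would combine these in two ways. The isometry, with $|M^*_\infty|\le|Z_t|+\beta_t$ on $\{\tau=\infty\}$ and $|M^*_\infty|\le R$ on $\{\tau<\infty\}$, gives the \emph{variance} inequality
$$\frac{\alpha_t}{\kappa}\,p\le\E[\langle M^*\rangle_\infty\mid\mathcal{F}_t]=\E[(M^*_\infty)^2\mid\mathcal{F}_t]\le(|Z_t|+\beta_t)^2p+R^2(1-p),$$
which bites when $|Z_t|$ is small. Separately, from $\E[M^*_\infty\mid\mathcal{F}_t]=0$, the pinning of $M^*_\infty$ near $-Z_t$, and Cauchy-Schwarz on $\{\tau<\infty\}$, I obtain the \emph{drift} inequality $(|Z_t|-\beta_t)\,p\le R(1-p)$, which bites when $|Z_t|$ is large. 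Each rewrites as $1-p\ge c_\bullet\,p$ for an explicit $\mathcal{F}_t$-measurable $c_\bullet\ge0$, so that $1-p\ge\gamma_t$ with $\gamma_t:=\dfrac{\max(c_{\mathrm{var}},c_{\mathrm{drift}})}{1+\max(c_{\mathrm{var}},c_{\mathrm{drift}})}$.

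It remains to verify $\gamma_\infty>0$. Put $b_t:=\beta_t/\sqrt{\alpha_t}$; by \eqref{lim.alpbeta}, $b_t\to0$. The drift coefficient is positive once $|Z_t|>\beta_t$ and the variance coefficient is positive once $|Z_t|<\sqrt{\alpha_t/\kappa}-\beta_t$; these ranges overlap and cover every admissible $|Z_t|\le\sqrt{8\kappa\alpha_t}$ as soon as $2b_t<\kappa^{-1/2}$, i.e.\ for all large $t$. A short estimate then bounds $\max(c_{\mathrm{var}},c_{\mathrm{drift}})$ below by a positive constant depending only on $\kappa$ for all large $t$, whence $\liminf_t\gamma_t>0$, as required.

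The main obstacle is the scale-matching between the two sides: the band half-width $\sqrt{8\kappa\alpha_s}$ and the accumulated bracket $\alpha_t/\kappa$ are both of order $\alpha_t$, so the crude bound $\E[(M^*_\infty)^2\mid\mathcal{F}_t]\le R^2$ is useless (it only gives $p\le\kappa R^2/\alpha_t>1$). What rescues the argument is that on $\{\tau=\infty\}$ the band shrinks to $0$, so $M^*_\infty$ is not merely bounded but pinned to a $\beta_t$-neighbourhood of $-Z_t$; this is exactly what makes the variance inequality effective for small $|Z_t|$ and forces the drift inequality for large $|Z_t|$. Closing the intermediate regime uniformly is delicate for small $t$ (large $b_t$), but since only $\gamma_\infty=\lim_t\gamma_t$ must be positive and $b_t\to0$, the two regimes merge for large $t$ and the intermediate difficulty does not affect the conclusion.
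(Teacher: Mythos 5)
Your proof is correct, and it rests on the same two pillars as the paper's: optional stopping at $\tau=S_t\wedge U_t$, the confinement $|Z_s|\le\sqrt{8\kappa\alpha_s}$ (hence $Z_s\to0$) on $\{\tau=\infty\}$, and the lower bound $\int_t^\infty G_u^2\Lambda_u\,\rmd u\ge\alpha_t/\kappa$ coming from \eqref{quadvar} when $\Gamma_s$ persists. The execution, however, is genuinely different. The paper never isolates the martingale part: it expands $Z_{S_t\wedge U_t}^2-Z_t^2$ by the integration-by-parts formula \eqref{changvar}, takes conditional expectations, drops the nonnegative terms $Z_t^2$ and $\sum F_{u-}^2(\Delta A_u)^2$, bounds the cross term $2\int Z_{u-}F_{u-}\rmd A_u$ below by $-\sqrt{8\kappa\alpha_t}\,\beta_t$ using the band and \eqref{jumpsize}, and compares with the upper bound $Z_{S_t\wedge U_t}^2\le 2\beta_t^2+16\kappa\alpha_t$ on $\{\tau<\infty\}$ (it is zero on $\{\tau=\infty\}$). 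Keeping $Z_t^2\ge0$ on the right-hand side is exactly what lets the paper avoid any discussion of where $Z_t$ sits inside the band, so a single variance inequality suffices and yields an explicit $\gamma_t\to\frac{1/\kappa}{1/\kappa+16\kappa}$. By recentring at $N_t$ you lose that free term, which is why — as you correctly diagnose — your variance inequality alone fails when $|Z_t|$ is comparable to $\sqrt{8\kappa\alpha_t}$ (since $8\kappa\ge1/\kappa$), and you must supplement it with the first-moment ``drift'' inequality $(|Z_t|-\beta_t)p\le R(1-p)$ and a dichotomy on $|Z_t|$ at scale $\sqrt{\alpha_t/\kappa}$. I checked that the two regimes do cover the whole band with constants depending only on $\kappa$ once $\beta_t\le\tfrac{1}{4\sqrt\kappa}\sqrt{\alpha_t}$, which \eqref{lim.alpbeta} guarantees eventually; your reduction to the event $\Gamma_t\cap\{|Z_t|\le\sqrt{8\kappa\alpha_t}\}$ (on whose complement $S_t\wedge U_t=t$) is also a point the paper leaves implicit. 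Two small polish items: you only obtain $\liminf_t\gamma_t>0$ along a possibly random threshold rather than an actual limit $\gamma_\infty$ — harmless for the application, but to match the statement you should replace your $\gamma_t$ for large $t$ by the explicit $\kappa$-dependent constant your estimate produces (or by a formula in $\beta_t/\sqrt{\alpha_t}$ that visibly converges); and no Cauchy--Schwarz is needed on $\{\tau<\infty\}$, the uniform bound $|M^*_\infty|\le R$ already gives the drift inequality.
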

\begin{proof}
Set $\mathcal{A}_t:= \{S_t\wedge U_t=\infty\}$. Using \eqref{def.Gamma}, we have $\{U_t = \infty\}\subseteq \{\lim_{t \to \infty} Z_t = 0\}$.   On the other hand,  on the event $\mathcal{A}_t^c$,  using the inequality $(a+b)^2\le 2a^2 + 2b^2$, we have
\begin{align}\label{Zupper}Z_{S_t \wedge U_t}^2 \le {2 (Z_{S_t \wedge U_t}-Z_{(S_t \wedge U_t)-})^2+2Z_{(S_t \wedge U_t)-}^2} \stackrel{(\ref{jumpsize})+(\ref{defn:St})}{\le}  2\beta_{S_t \wedge U_t}^2+16\kappa\alpha_{S_t\wedge U_t}
 \le 2(\beta_t^2+8\kappa\alpha_t),\end{align}
where the second inequality follows from (\ref{jumpsize}) and (\ref{defn:St}). Therefore,
 \begin{equation}
\begin{aligned}\label{upperb} \mathbb{E}\left[Z_{S_t\wedge U_t}^2 | \mathcal{F}_t\right] & = \mathbb{E}\left[ Z_{S_t\wedge U_t}^2 \1_{\mathcal{A}_t^c} | \mathcal{F}_t\right] \le 2(\beta_t^2+8\kappa\alpha_t) \P\left(\mathcal{A}_t^c \big| \mathcal{F}_t \right).
\end{aligned}
\end{equation}
Applying the integration by parts formula  \eqref{changvar} to $(Z_t)_{t\ge0}$ and using and the fact that $\Delta A_u \Delta M_u=0$ almost surely, we obtain that
for $t\ge s$,
\begin{equation*}
\begin{aligned}
Z_t^2-Z_s^2 &= 2\int_{s}^t Z_{u-}\text{d}Z_u+[Z]_t-[Z]_s\\
 & = 2 \int_{s}^t Z_{u-}F_{u-} \text{d}A_u + 2\int_s^t  Z_{u-}G_{u-} \text{d}M_u 
  + \sum_{s<u\le t} F_{u-}^2 (\Delta A_u)^2 +\sum_{s<u\le t} G_{u-}^2 (\Delta M_u)^2.
\end{aligned}
\end{equation*}
Using \eqref{isometry}-\eqref{M.integrak} and the fact that $G_u=G_{u-}$ almost everywhere, we have \begin{align*}
 \E\left[\sum_{s<u\le t} G_{u-}^2(\Delta M_u)^2  \ \big|\ \mathcal{F}_s \right]  
 & = \E\left[\left[ \int_{0}^{\bullet} {G_{u-}\text{d}M_u}\right]_{t}-\left[ \int_{0}^{\bullet} {G_{u-}\text{d}M_u}\right]_{s}
 \big| \mathcal{F}_{s} \right]\\
 & \stackrel{\eqref{isometry}}{=} \E\left[\left\langle \int_{0}^{\bullet} {G_{u-}\text{d}M_u}\right\rangle_{t}-\left\langle \int_{0}^{\bullet} {G_{u-}\text{d}M_u}\right\rangle_{s}
 \big| \mathcal{F}_{s} \right]
 \\ & \stackrel{\eqref{M.integrak}}{=}\E\left[\int_s^t G_{u-}^2\rmd \langle M\rangle_u  \ \big|\ \mathcal{F}_s\right] =\E\left[\int_s^t G_u^2\Lambda_u\rmd u \ \big|\ \mathcal{F}_s \right].
\end{align*}
{Here the bullet notation $\bullet$ is used to indicate that we consider the whole processes  under angle bracket and quadratic variation operators}. Also note that $\int_{0}^tZ_{u-}G_{u-}\rmd M_u$ is a $L^2$-martingale as its angle bracket 
$$\int_{0}^tZ^2_{u}G^2 _{u} \Lambda_u \rmd u \stackrel{\text{\tiny Def.}~\ref{def:good}(iv)}{\le} K^3    \qquad \mbox{for all $t\ge 0$}.$$
We thus have
\begin{align}\label{2rdmoment}
\E\left[Z_t^2 |\mathcal{F}_s\right]=Z_s^2+  \E\left[ 2\int_{s}^t Z_{u-}F_{u-}\text{d}A_u+  \sum_{s<u\le t} F_{u-}^2 (\Delta A_u)^2 \ \big|\ \mathcal{F}_s\right]+ \E\left[ \int_{s}^t G^2_{u} \Lambda_u\text{d}u \ \big|\ \mathcal{F}_s \right].
\end{align}
Using (\ref{quadvar}), we get
\begin{align*}\int_{t}^{S_t\wedge U_t} {\Lambda_u}G_{u}^2\text{d}u &\ge \1_{\mathcal A_t} \int_{t}^{\infty}{\Lambda_u}G_{u}^2\text{d}u \stackrel{(\ref{quadvar})}{\ge} \frac 1 \kappa \1_{\mathcal A_t} \int_{t}^{\infty}\widetilde{\alpha}_u\text{d}u  = \frac1{\kappa} \1_{\mathcal A_t}  \alpha_t. 
\end{align*}
On the other hand, it follows from \eqref{jumpsize} and \eqref{defn:St} that
\begin{align*}
& \int_{t}^{S_t\wedge U_t} Z_{u-} F_{u-}\text{d}A_u {\ge} - \int_{t}^{S_t\wedge U_t} | Z_{u-}|   | F_{u-}|  \text{d}\text{Va}(A)_u \stackrel{\eqref{defn:St}}{\ge}- \sqrt{8\kappa\alpha_t}\int_{t}^{\infty} | F_{u-}|  \text{d}\text{Va}(A)_u \stackrel{(\ref{jumpsize})}{\ge} -\sqrt{8\kappa\alpha_t}\beta_t.
\end{align*}
Taking into account (\ref{2rdmoment}) and the above inequalities, we have
\begin{align}\label{lowerb}\nonumber \mathbb{E}\left[ Z_{S_t\wedge U_t}^2 | \mathcal{F}_t\right]&\ge 2\E \left[\int_{t}^{S_t\wedge U_t} Z_{u-} F_{u-}\text{d}A_u \big|\mathcal{F}_t\right] + \E\left[ \int_t^{S_t\wedge U_t}\Lambda_u G_{u}^2\text{d}u\ \big|\ \mathcal{F}_t\right]\\
 & \ge  -4\sqrt{2\kappa\alpha_t}\beta_t + \frac{1}{\kappa}\alpha_t\P(\mathcal A_t\big| \mathcal{F}_t).
\end{align}
Combining (\ref{upperb}) with (\ref{lowerb}),  we obtain
\begin{align}\label{probA} \P(\mathcal A_t\big| \mathcal{F}_t)\le\frac{4\sqrt{2\kappa\alpha_t}\beta_t+2(\beta_t+8\kappa\alpha_t)}{\frac{1}{\kappa}\alpha_t+2(\beta_t+8\kappa\alpha_t)}
=1-\gamma_t,
\end{align}
where $$\gamma_t:=\frac{\frac{1}{\kappa}-4\sqrt{2\kappa}\frac{\beta_t}{\sqrt{\alpha_t}}}{\frac{1}{\kappa}+16\kappa+2\frac{\beta_t}{\alpha_t}}\to\frac{\frac{1}{\kappa}}{\frac{1}{\kappa}+16\kappa}>0\quad \text{as} \quad t\to\infty.$$
\end{proof}

{Recall that $S_t$ is the infimum time $s\in[t,\infty)$  such that $Z_s\notin[-\sqrt{8\kappa \alpha_s},\sqrt{8\kappa \alpha_s}]$. We next prove that given $\mathcal{F}_{S_t}$ the probability that $\Gamma$ does not occur or there exists a time $s\in[t,\infty)$ when $\Gamma_s$ does not occur is larger than $1/2$.}

\begin{proposition}\label{lem2} Under the assumptions of Theorem~\ref{nonconvergence}, for all $t\ge 0$, we have that on the event $\{{S_t}<U_t\}$
$$\P\left(\Gamma^c\cup\{U_t<\infty\} |\mathcal{F}_{S_t}\right) \ge \frac{1}{2}.$$
\end{proposition}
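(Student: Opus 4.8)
The plan is to reduce the claim to a single application of the conditional Doob inequality \eqref{doob.ineq} to the martingale part of $\mathbf Z$, started at time $S_t$. First I would simplify the target event. Since $\Gamma=\bigcup_{n}\bigcap_{s\ge n}\Gamma_s$ and $U_t=\inf\{s\ge t:\1_{\Gamma_s}=0\}$, the event $\{U_t=\infty\}$ means that $\Gamma_s$ holds for every $s\ge t$; hence $\bigcap_{s\ge t}\Gamma_s$ occurs and so does $\Gamma$, giving $\{U_t=\infty\}\subseteq\Gamma$, i.e. $\Gamma^c\subseteq\{U_t<\infty\}$ and therefore $\Gamma^c\cup\{U_t<\infty\}=\{U_t<\infty\}$. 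It thus suffices to show that, on $\{S_t<U_t\}$, one has $\P(U_t=\infty\mid\mathcal{F}_{S_t})\le\tfrac12$. By \eqref{def.Gamma}, on $\{U_t=\infty\}$ the process converges, $Z_s\to\varrho=0$, and since $\Gamma_s$ then holds for all $s\ge t$ the comparison \eqref{quadvar} is available on the whole of $[S_t,\infty)$.

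Next I would set up the relevant martingale. On $\{S_t<U_t\}$ the definition \eqref{defn:St}, the right-continuity of $\mathbf Z$ and the continuity of $t\mapsto\alpha_t=\int_t^{\infty}\widetilde{\alpha}_u\,\rmd u$ give $|Z_{S_t}|\ge\sqrt{8\kappa\alpha_{S_t}}$, while $\Gamma_{S_t}$ holds so that \eqref{alpbeta} yields $\beta_{S_t}\le\tfrac12\sqrt{\alpha_{S_t}}\le\tfrac12\sqrt{\kappa\alpha_{S_t}}$ (using $\kappa\ge1$). Letting $\epsilon\in\{-1,+1\}$ be the ($\mathcal{F}_{S_t}$-measurable) sign of $Z_{S_t}$, I define the stopped martingale $N_s=-\epsilon\int_{S_t}^{s\wedge U_t}G_{u-}\,\rmd M_u$ for $s\ge S_t$, with $N_{S_t}=0$. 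By \eqref{M.integrak} its angle bracket is $\langle N\rangle_s=\int_{S_t}^{s\wedge U_t}G_u^2\Lambda_u\,\rmd u$, and since $\Gamma_u$ holds for $u<U_t$ the upper bound in \eqref{quadvar} gives $\langle N\rangle_\infty\le\kappa\int_{S_t}^{\infty}\widetilde{\alpha}_u\,\rmd u=\kappa\alpha_{S_t}$.

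The heart of the argument is to force $N$ to make a large move on $\{U_t=\infty\}\cap\{S_t<U_t\}$. On this event $Z_s\to0$, so the stopping time $\sigma=\inf\{s\ge S_t:|Z_s|\le(\sqrt2-\tfrac12)\sqrt{\kappa\alpha_{S_t}}\}$ is finite. Using the decomposition of $\mathbf Z$ and the finite-variation estimate $\bigl|\int_{S_t}^{\sigma}F_{u-}\,\rmd A_u\bigr|\le\int_{S_t}^{\infty}|F_{u-}|\,\rmd\mathrm{Va}_u(A)\le\beta_{S_t}$ from \eqref{jumpsize}, and recalling $\epsilon Z_{S_t}=|Z_{S_t}|$, I would estimate on $\{U_t=\infty\}$
\[
N_\sigma\ \ge\ |Z_{S_t}|-|Z_\sigma|-\beta_{S_t}\ \ge\ 2\sqrt2\,\sqrt{\kappa\alpha_{S_t}}-\bigl(\sqrt2-\tfrac12\bigr)\sqrt{\kappa\alpha_{S_t}}-\tfrac12\sqrt{\kappa\alpha_{S_t}}\ =\ \sqrt{2\kappa\alpha_{S_t}}.
\]
Hence $\{U_t=\infty\}\cap\{S_t<U_t\}\subseteq\{\sup_{s\ge S_t}N_s\ge\sqrt{2\kappa\alpha_{S_t}}\}$. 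Applying the conditional Doob inequality \eqref{doob.ineq} with $p=2$ from the time $S_t$, together with \eqref{isometry} and $\langle N\rangle_\infty\le\kappa\alpha_{S_t}$, I obtain
\[
\P\Bigl(\sup_{s\ge S_t}N_s\ge\sqrt{2\kappa\alpha_{S_t}}\Bigm|\mathcal{F}_{S_t}\Bigr)\ \le\ \frac{\E[\langle N\rangle_\infty\mid\mathcal{F}_{S_t}]}{2\kappa\alpha_{S_t}}\ \le\ \frac{\kappa\alpha_{S_t}}{2\kappa\alpha_{S_t}}\ =\ \frac12,
\]
which gives $\P(U_t=\infty\mid\mathcal{F}_{S_t})\le\tfrac12$ on $\{S_t<U_t\}$ and hence the stated bound.

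I expect the main difficulty to be bookkeeping rather than conceptual. The constant $8$ in \eqref{defn:St} is precisely what makes the forced displacement equal to $\sqrt{2\kappa\alpha_{S_t}}$, so that the angle-bracket bound $\kappa\alpha_{S_t}$ delivers exactly $\tfrac12$; the finite-variation term must be shown to be unable to bring $\mathbf Z$ back, which is the role of the slack $\beta_{S_t}\le\tfrac12\sqrt{\kappa\alpha_{S_t}}$ guaranteed by \eqref{alpbeta}, and it is essential that $\{U_t=\infty\}$ forces $Z_s\to0$ (not merely a return to level $\sqrt{2\kappa\alpha_{S_t}}$), so that $\sigma<\infty$. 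Two technical points also require care: that $\{S_t<U_t\}$ and $\epsilon=\mathrm{sgn}(Z_{S_t})$ are $\mathcal{F}_{S_t}$-measurable and that Doob's inequality may be invoked from the random time $S_t$ (legitimate by optional stopping for the filtration shifted at $S_t$), and that the truncation at $U_t$ in $N$ keeps \eqref{quadvar} valid throughout the angle-bracket estimate while leaving $N$ unchanged on $\{U_t=\infty\}$.
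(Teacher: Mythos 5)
Your argument is correct and follows essentially the same route as the paper's proof: both use the decomposition of $\mathbf Z$ to show that on $\{S_t<U_t\}\cap\{U_t=\infty\}$ the martingale part $\int_{S_t}^{\cdot}G_{u-}\,\rmd M_u$ must travel at least $\sqrt{2\kappa\alpha_{S_t}}$ (since $|Z_{S_t}|\ge\sqrt{8\kappa\alpha_{S_t}}$, the finite-variation contribution is at most $\beta_{S_t}\le\tfrac12\sqrt{\alpha_{S_t}}$ by \eqref{alpbeta}, and $Z_s\to\varrho$ forces $|Z|$ back down), and then bound that event by $1/2$ via Doob's $L^2$ inequality together with the angle-bracket estimate $\int_{S_t}^{U_t}G_u^2\Lambda_u\,\rmd u\le\kappa\alpha_{S_t}$ from \eqref{quadvar}. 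The only differences are cosmetic: you first reduce $\Gamma^c\cup\{U_t<\infty\}$ to $\{U_t<\infty\}$, track a signed one-sided supremum, and stop at the level $(\sqrt2-\tfrac12)\sqrt{\kappa\alpha_{S_t}}$ rather than at the paper's $V_t=\inf\{s\ge S_t: Z_s=0\}$.
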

\begin{proof}
Set 
\begin{equation}\label{def.Vt}V_t=\inf\left\{s\ge S_t: Z_s=0\right\}.\end{equation}
On the event $\{S_t<U_t\}$,
we have that for each $s\in[ S_t,U_t\wedge V_t]$,
\begin{align}\nonumber
|Z_s|=\left|Z_{S_t}-\int_{S_t}^{s} F_{u-}\text{d}A_u -\int_{S_t}^s G_{u-} \text{d}M_u\right| 
& \ge \left|Z_{S_t} \right| - \int_{S_t}^{\infty} |F_{u-}|\text{d}\text{Va}(A)_u -\left| \int_{S_t}^s G_{u-} \text{d}M_u\right|
\\ \label{Zsupper}
& \stackrel{(\ref{defn:St})+(\ref{jumpsize})}{\ge} \sqrt{8\kappa\alpha_{S_t}} -\beta_{S_t}- \sup_{s\in[S_t, U_t]}\left|\int_{S_t}^s G_{u-}{\text{d}M_u}\right|.
\end{align}
Notice that using \eqref{eq:inclu} with $s= S_t$, we have that $\{S_t<U_t\}\subseteq \Gamma_{S_t}$. Using this fact 
and \eqref{alpbeta},  it follows from \eqref{Zsupper} that on the event $$\{S_t<U_t\} \cap\left\{\sup_{s\in[S_t, U_t]}\left|\int_{S_t}^s G_{u-}{\text{d}M_u}\right|\le\sqrt{2\kappa\alpha_{S_t}} \right\}$$
we have 
\begin{equation}\label{eq:inbe} \inf_{s\in [S_t, U_t\wedge V_t]}|Z_s| \stackrel{\eqref{Zsupper}}{\ge} {\sqrt{2\kappa\alpha_{S_t}}}-\beta_{S_t}\stackrel{(\ref{alpbeta})+ (\kappa \ge 1)}{\ge}\beta_{S_t}.
\end{equation}
On the other hand,  on the event $\{S_t<U_t\}\in \mathcal{F}_{S_t}$, we have
\begin{align}\label{eq:int2}\int_{S_t}^{U_t}{\Lambda_u}G_{u}^2\text{d}u\stackrel{\eqref{quadvar}}{\le} \kappa \int_{S_t}^{U_t}\widetilde{\alpha}_u\text{d}u\le \kappa  \alpha_{S_t}.  
\end{align}
Thus, on the event $\{S_t<U_t\}\in \mathcal{F}_{S_t}$, we have
\begin{equation}\label{eqn:int1}
\begin{aligned}
\P\left(\inf_{s\in [S_t, U_t\wedge V_t]}|Z_s|<\beta_{S_t}  \big|\mathcal{F}_{S_t}\right)&\stackrel{(\ref{eq:inbe})}{\le} \P\left(\sup_{s\in[S_t, U_t]}\left|\int_{S_t}^s G_{u-}{\text{d}M_u}\right|>\sqrt{2\kappa\alpha_{S_t}} \big|\mathcal{F}_{S_t}\right)\\
&\stackrel{\mbox{\tiny Doob ineq.}}{\le}  \frac{\E\left[\left( \int_{S_t}^{U_t}G_{u-}{\text{d}M_u}\right)^2\big|\mathcal{F}_{S_t} \right]}{2\kappa\alpha_{S_t}}
 \\ 
& = \frac{\E\left[ \int_{S_t}^{U_t}\Lambda_u G_{u}^2\text{d}u\big|\mathcal{F}_{S_t} \right]}{2\kappa\alpha_{S_t}}
 \stackrel{(\ref{eq:int2})}{\le}  \frac{1}{2}. 
 \end{aligned}
 \end{equation}
Notice that on the event $\{S_t<U_t\}$,
\begin{align}\P\left(\{U_t=\infty\}\cap\Gamma\big| \mathcal{F}_{S_t} \right)
\nonumber & \stackrel{\eqref{def.Gamma}}{\le} \P\left(\{U_t=\infty\}\cap\{\lim_{s\to\infty} Z_s=0\}\big| \mathcal{F}_{S_t} \right)\\ 
\nonumber & \stackrel{\eqref{def.Vt}}{\le} \P\left(\{U_t=\infty\}\cap\left\{\inf_{s\in [S_t, U_t\wedge V_t]}|Z_s|<\beta_{S_t}\right\}  \big|\mathcal{F}_{S_t}\right) \\
\label{eq:int3} & \le \P\left(\inf_{s\in [S_t, U_t\wedge V_t]}|Z_s|<\beta_{S_t}  \big|\mathcal{F}_{S_t}\right), 
\end{align}
{where the second inequality follows from the definition of $V_t$ in \eqref{def.Vt} and the fact that on the event $\{S_t<U_{t}=\infty\}\cap \{\lim_{s\to\infty} Z_s=0\}$ we have $\inf_{s\in [S_t, U_t\wedge V_t]}|Z_s|=\inf_{s\in [S_t, V_t]}|Z_s|=0<\beta_{S_t}.$}

Hence, on the event $\{S_t<U_t\}$, we obtain  
\begin{align*}\P(\{U_t=\infty\}\cap \Gamma|\mathcal{F}_{S_t} )
\stackrel{\eqref{eqn:int1}+\eqref{eq:int3}}{\le} 
 \frac{1}{2}.
\end{align*}
\end{proof}

\begin{proof}[Proof of Theorem~\ref{nonconvergence}]
Using Proposition~\ref{lem2}, we have
\begin{align*}
\P\left(\Gamma^c\cup\{U_t<\infty\}  \big| \mathcal{F}_t \right) &\ge  \E \left[ \P\left(\Gamma^c \cup\{U_t<\infty\} \big| \mathcal{F}_{S_t}\right)\1_{\{S_t<U_t\}}\big| \mathcal{F}_t\right]\ge \frac{1}{2}\P\left( S_t<U_t \big| \mathcal{F}_t \right) \quad \forall t\ge 0.
 \end{align*}
Other other hand, using Proposition~\ref{lem1}
$$\gamma_t \le \P\left( S_t\wedge U_t<\infty \big| \mathcal{F}_t \right)\le \P\left( S_t<U_t \big| \mathcal{F}_t \right) + \P\left( U_t<\infty \big| \mathcal{F}_t \right).$$ 
Hence it follows that
\begin{align}\label{PGammac}
\P\left(\Gamma^c \cup\{U_t<\infty\} \big| \mathcal{F}_t \right)\ge \frac{1}{2}\left(\gamma_t-\P(U_t<\infty \big| \mathcal{F}_t)\right).
\end{align}
Note that, by the definition of $U_t$ given in \eqref{defn:Ut}, we have that  $\Gamma=\{U_t=\infty, \mbox{for all large $t$}\}$ and thus $\1_{\{U_{t}=\infty\}} \to \1_{\Gamma}$ a.s. as $t\to\infty$. Using the triangle inequality, we have that
\begin{align*}
\E\left[\left|\1_{\Gamma}-\P(U_{t}=\infty|\mathcal{F}_t)\right|\right]
& \le \E\left[\left|\1_{\Gamma}-\P(\Gamma|\mathcal{F}_t)\right|\right]+\E\left[\left|\P(\Gamma|\mathcal{F}_t)-\P(U_{t}=\infty|\mathcal{F}_t)\right|\right] \\
& \le \E\left[\left|\1_{\Gamma}-\P(\Gamma|\mathcal{F}_t)\right|\right]+\E\left[\left|\1_{\Gamma}-\1_{\{U_{t}=\infty\}}\right|\right]
\end{align*}
{By Levy's zero-one law, we note that a.s. $\P(\Gamma|\mathcal{F}_t)\to \1_{\Gamma}$ as $t\to\infty$. Additionally, by Lebesgue's dominated convergence theorem, we also have $\E\left[\left|\1_{\Gamma}-\1_{\{U_{t}=\infty\}}\right|\right]\to 0$ as $t\to\infty$.} As a result, we obtain
\begin{align}\label{Pcontra}\P\left( U_t<\infty \big| \mathcal{F}_t \right) \to \1_{\Gamma^c}\end{align} in $L^1(\P)$ as $t\to\infty$. {Similarly, we also have
\begin{align}\label{Pcontra2}\P(\Gamma^c\cup \{U_t<\infty\}|\mathcal{F}_t)\to \1_{\Gamma^c}\end{align}
 in $L^1(\P)$ as $t\to\infty$.} Combining \eqref{PGammac} with \eqref{Pcontra} {and \eqref{Pcontra2}}, we thus get
$$\1_{\Gamma^c}\ge \frac{1}{2}(\gamma_{\infty}-\1_{\Gamma^c})\quad \text{a.s.}$$
{Since $\gamma_{\infty}>0$ (by Proposition~\ref{lem1}), it follows that $\P(\Gamma)=0$.} 
\end{proof}
\subsection{Strongly VRJP on two vertices}
In this subsection, we consider the extension $\widetilde{\X}$ of the VRJP$(w)$ $\X$ on $\{0,1\}$. Moreover we assume throughout this subsection that the weight function  $w:[0,\infty)\to [0,\infty)$ satisfies Assumption-W, and 
\begin{align}\label{integrable}\int_{1}^\infty\frac{\text{d}t}{w(t)}<\infty.\end{align}
{It immediately  follows that $w(\infty)=\infty$.
We aim to prove that $\widetilde{L}(0,\infty)\wedge \widetilde{L}(1,\infty)<\infty$ a.s. and the distribution of this random variable has no atoms.}

Define \begin{equation}\label{def:Zt}
Z_t:= \int_{1}^{\widetilde{L}(0,t)}\frac{1}{w(u)}\text{d}u-\int_{1}^{\widetilde{L}(1,t)}\frac{1}{w(u)}\text{d}u -\frac{\1_{\{\widetilde X_t=1 \}}}{W(0,t)W(1,t)} = H_t - \frac{\1_{\{\widetilde X_t=1 \}}}{W(0,t)W(1,t)}.\end{equation}
Recall from (\ref{Ht}) that 
\begin{align}\label{def:ZZt}Z_t: =\int_0^t F_{u-}\text{d}A_u+
\int_0^t G_{u-}\text{d}M_u, 
\end{align}
where 
\begin{equation*}F_t:=\1_{\{\widetilde X_{t}=1\}}, 
\quad A_t:=G_t:=-\frac{1}{W(0,t)W(1,t)}\end{equation*} 
and
\begin{equation*}
M_t:=\1_{\{\widetilde X_t=1\}}-\int_{0}^t\left(W(1,u)\1_{\{\widetilde X_u=0\}}-W(0,u)\1_{\{\widetilde X_u=1\}}\right) \text{d}u.
\end{equation*}

{Using \eqref{integrable}, we have that $Z_t$ (defined in \eqref{def:Zt}}) converges almost surely to some $Z_{\infty}$ as $t\to\infty$. On the other hand, it immediately follows from (\ref{def:Zt}) that \begin{equation}\label{Linfty}
\left\{ Z_{\infty}=0\right\}=\left\{  \widetilde{L}(0,\infty)\wedge \widetilde{L}(1,\infty) =\infty\right\}.\end{equation}
Furthermore, the distribution of $\widetilde{L}(0,\infty)\wedge  \widetilde{L}(1,\infty)$ has no atoms if and only if the distribution of $Z_{\infty}$ has no atoms. Hence, in order to prove that $\widetilde{L}(0,\infty)\wedge \widetilde{L}(1,\infty)$ is a finite random variable with non-atomic distribution, it is sufficient to show that 
\begin{equation}\label{eq:aim}\mathbb P\left(Z_{\infty}=\varrho\right)=0.
\end{equation}
for any deterministic $\varrho\in \mathbb R$. 
We need few intermediate results in order to prove \eqref{eq:aim}.

\begin{proposition}
The process $\mathbf{Z}$ is {\bf good}.
\end{proposition}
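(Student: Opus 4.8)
The plan is to verify the four conditions (i)--(iv) of Definition~\ref{def:good} for the explicit decomposition \eqref{def:ZZt}, in which $F_t=\1_{\{\widetilde X_t=1\}}$, $A_t=G_t=-\big(W(0,t)W(1,t)\big)^{-1}$, and $M_t$ is the martingale of Lemma~\ref{martingale}. All four processes are adapted and c\`adl\`ag; $F$ and $M$ are of finite variation because $\widetilde{\X}$ makes only finitely many jumps on each compact interval, while $A=G$ is monotone and bounded, so $Z_t=\int_0^tF_{u-}\rmd A_u+\int_0^t G_{u-}\rmd M_u$ is already in the required form. Conditions (i)--(iii) are then essentially in hand, and only the uniform bound (iv) requires genuine work.

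For (i), Lemma~\ref{martingale} states precisely that $M$ is a c\`adl\`ag finite-variation martingale with $\langle M\rangle_t=\int_0^t\Lambda_u\rmd u$, and $\Lambda_t=\1_{\{\widetilde X_t=0\}}W(1,t)+\1_{\{\widetilde X_t=1\}}W(0,t)\ge w(1)=1>0$ is positive and adapted, as demanded. For (ii), I would note that $W(0,t)$ and $W(1,t)$ are non-decreasing (local times increase and $w$ is increasing), so $A_t=-\big(W(0,t)W(1,t)\big)^{-1}$ is non-decreasing and takes values in $[-1,0)$; hence $A$ has finite variation, decomposes trivially with $A^-\equiv 0$, and $\mathrm{Va}_t(A)=A_t-A_0\le 1$. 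For (iii), since the integral term of $M$ is continuous we have $\Delta M_t=\Delta\1_{\{\widetilde X_t=1\}}$, so $\Delta A_t\,\Delta M_t=\Delta A_t\,\Delta\1_{\{\widetilde X_t=1\}}$, which vanishes almost surely by exactly the argument at the end of the proof of Proposition~\ref{pr:deco}: jumps of $A$ occur only at jump times $\tau$ of $\widetilde{\X}$, where $\widetilde L(0,\tau)$ and $\widetilde L(1,\tau)$ have absolutely continuous laws and therefore a.s.\ avoid the countable discontinuity set of $w$, forcing $\Delta A_\tau=0$.

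The substantive point is (iv). For $|Z_t|$ I would use the form $Z_t=H_t-\1_{\{\widetilde X_t=1\}}\big(W(0,t)W(1,t)\big)^{-1}$ from \eqref{def:Zt}: each of the two integrals defining $H_t$ lies in $\big[0,\int_1^\infty \rmd u/w(u)\big]$, so $|H_t|\le\int_1^\infty \rmd u/w(u)=:C_1<\infty$ by \eqref{integrable}, while the second term is at most $1$ because $W(i,t)\ge 1$; hence $|Z_t|\le C_1+1$ deterministically. For the angle-bracket term I would compute $G_u^2\Lambda_u$ case by case: on $\{\widetilde X_u=0\}$ it equals $\big(W(0,u)^2W(1,u)\big)^{-1}$ and on $\{\widetilde X_u=1\}$ it equals $\big(W(0,u)W(1,u)^2\big)^{-1}$. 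Performing the time change $\rmd u=\rmd\widetilde L(i,u)$ on $\{\widetilde X_u=i\}$ and using $W(1-i,u)\ge 1$, each piece is dominated by $\int_1^\infty \rmd\ell/w(\ell)^2$, so $\int_0^t G_u^2\Lambda_u\rmd u\le 2\int_1^\infty \rmd\ell/w(\ell)^2$. Finally $w\ge 1$ gives $1/w^2\le 1/w$, whence $\int_1^\infty \rmd\ell/w(\ell)^2\le\int_1^\infty \rmd\ell/w(\ell)<\infty$, again by \eqref{integrable}. Taking $K:=\max\{C_1+1,\,2\int_1^\infty \rmd\ell/w(\ell)^2\}$ then secures (iv) with probability one.

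The main (mild) obstacle is thus (iv): the key idea is the time change that converts the $\rmd u$-integral of $G_u^2\Lambda_u$ into an integral in the local-time variable, after which the strong-reinforcement hypothesis \eqref{integrable} together with the normalisation $w\ge 1$ renders the bound uniform and deterministic. Everything else reduces to a direct appeal to Lemma~\ref{martingale} and to facts already established in the proof of Proposition~\ref{pr:deco}.
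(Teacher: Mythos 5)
Your proof is correct and takes essentially the same route as the paper: both verify the four conditions of Definition~\ref{def:good} directly from the explicit decomposition \eqref{def:ZZt}, with (i)--(iii) reducing to Lemma~\ref{martingale} and the jump argument from the proof of Proposition~\ref{pr:deco}, and (iv) being the only step needing a computation. The sole difference is cosmetic: for $\int_0^\infty G_u^2\Lambda_u\,\rmd u$ you time-change to the local-time variable and bound by $2\int_1^\infty \rmd\ell/w(\ell)^2$, whereas the paper uses the pointwise bound $G_t^2\Lambda_t\le 1/w(t/2+1)$ (valid since $W(0,t)\vee W(1,t)\ge w(t/2+1)$) and integrates in $t$; both estimates are equally elementary and yield the required deterministic constant.
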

\begin{proof}
The decomposition \eqref{def:ZZt} fits the one appearing in Definition~\ref{def:good}. Next we check the items in the list appearing in Definition~\ref{def:good}. First,  the condition (i) follows directly from Lemma~\ref{martingale}, where we recall that $(M_t)_{t\ge0}$ is a c\`adl\`ag martingale with $\left\langle M\right\rangle_t=\int_{0}^t \Lambda_u\text{d}u,$
where $$\Lambda_t:=W(0,t) \1_{\{  \widetilde X_t=1\}}+W(1,t) \1_{\{ \widetilde X_t=0\}}.$$ 
The condition (ii) is clear as $A_t$ is increasing for $t\in [0,\infty)$ and thus $A_t=\text{Va}_t(A)$. The condition (iii) is also fulfilled as we showed in the proof of Proposition~\ref{pr:deco} that $\Delta M_t \Delta A_t=0$ a.s. for all $t>0$. It is immediate from \eqref{def:Zt} that  $\sup_{t\ge0}|Z_t|\le 1+2\int_1^{\infty}\frac{du}{w(u)}$. Finally, we verify (iv) by showing that $\int_0^\infty G_u^2 \Lambda_u\rmd u\le2\int_{1}^{\infty}\frac{\rmd u}{w(u)}<\infty$. Indeed, using the fact that $W(0,t)\wedge W(1,t)\ge w(1)=1$ and ${W(0,t)\vee W(1,t)}\ge w(t/2+1)$, we have $$G^2_t\Lambda_t=\frac{\1_{\{\widetilde X_t=1\}}}{W(0,t)W(1,t)^2}+\frac{\1_{\{\widetilde X_t=0\}}}{W(1,t)W(0,t)^2}\le \frac{2}{W(0,t)\vee W(1,t)}\le \frac{1}{w(t/2+1)}.$$
\end{proof}

\begin{proposition}\label{lem:alpha} For any $p>q\ge1$, we have 
$$\frac{1}{\left(W(0,t)W(1,t)\right)^{p/2}}= o\left( \int_{\widetilde{L}(0,t)\wedge \widetilde{L}(1,t)}^{\infty}\frac{\rmd u}{w(u)^q}\right),\quad\text{as}\ t\to\infty.$$
\end{proposition}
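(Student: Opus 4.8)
The plan is to strip the statement down to a deterministic inequality for $w$ and then lean on the regularity hypothesis \eqref{eq:assu}. Write $\mu_t=\widetilde L(0,t)\wedge\widetilde L(1,t)$ and $N_t=\widetilde L(0,t)\vee\widetilde L(1,t)$ for the smaller and larger local time, and set $\phi_q(x)=\int_x^\infty w(u)^{-q}\rmd u$, which is finite for $q\ge1$ because $w\ge1$ on $[1,\infty)$. By monotonicity of $w$ we have $W(0,t)W(1,t)=w(\mu_t)w(N_t)$, and since $\widetilde L(0,t)+\widetilde L(1,t)=t+2$ it follows that $N_t\ge(t+2)/2\to\infty$ while $N_t\ge\mu_t$. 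Thus the assertion is exactly that
\[
R(t):=\frac{1}{\big(w(\mu_t)w(N_t)\big)^{p/2}\,\phi_q(\mu_t)}\xrightarrow[t\to\infty]{}0 .
\]
As $R(t)\ge0$, it suffices to prove $R(t_k)\to0$ along an arbitrary sequence $t_k\to\infty$; passing to a subsequence I may assume $\mu_{t_k}\to\mu_\infty\in[1,\infty]$, and I treat the two cases separately.

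In the easy regime $\mu_\infty<\infty$ we have $\phi_q(\mu_{t_k})\to\phi_q(\mu_\infty)>0$ and $w(\mu_{t_k})\ge1$ stays bounded, whereas $w(N_{t_k})\to\infty$ (here $w(\infty)=\infty$, which follows from \eqref{integrable}); hence $R(t_k)\le\big(w(N_{t_k})^{p/2}\phi_q(\mu_{t_k})\big)^{-1}\to0$. In the substantial regime $\mu_\infty=\infty$ I use $w(N_t)\ge w(\mu_t)$ to reduce to the deterministic claim that $w(\mu)^{p}\phi_q(\mu)\to\infty$ as $\mu\to\infty$. I would first remove the exponent $q$: choosing $T=T(\mu)\ge\mu$ with $\int_\mu^T w^{-1}=\tfrac12 I(\mu)$, where $I:=\phi_1$, hypothesis \eqref{eq:assu} gives $\big(w(T)/w(\mu)\big)^{\rho}\le I(\mu)/I(T)=2$, so $w(T)\le 2^{1/\rho}w(\mu)$; combined with $w(u)^{-q}\ge w(u)^{-1}w(T)^{-(q-1)}$ on $[\mu,T]$ this yields
\[
\phi_q(\mu)\ \ge\ w(T)^{-(q-1)}\!\int_\mu^T w^{-1}\ \ge\ c_\rho\,w(\mu)^{-(q-1)}I(\mu),\qquad c_\rho=\tfrac12\,2^{-(q-1)/\rho}.
\]
It therefore remains to establish $w(\mu)^{P}I(\mu)\to\infty$ with $P:=p-q+1>1$.

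This last point is the main obstacle, and it is precisely where \eqref{eq:assu} is indispensable. Put $L:=\lim_{t}w(t)^{\rho}I(t)$, which exists in $[0,\infty)$ by monotonicity; since \eqref{eq:assu} also holds for every smaller exponent, I may assume $\rho<P$. If $L>0$ the claim is immediate, for $I(t)\ge L\,w(t)^{-\rho}$ gives $w(t)^{P}I(t)\ge L\,w(t)^{P-\rho}\to\infty$. The genuinely delicate case is $L=0$ (realised e.g. by $w(x)=\mathrm e^{x^2}$), where one must prove the matching lower bound $I(t)\ge w(t)^{-1-o(1)}$, i.e. $\limsup_t\frac{-\ln I(t)}{\ln w(t)}\le1$. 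Here the monotonicity of $w^{\rho}I$ is doing the real work: such a lower bound can fail only if $w$ increases by more than a sub-polynomial factor over a very short interval and then remains essentially constant, but this ``jump followed by a plateau'' would force $w^{\rho}I$ to increase, contradicting \eqref{eq:assu}. I expect the technical heart of the proposition to be making this quantitative—propagating the control that \eqref{eq:assu} exerts on the growth of $w$ into the desired lower bound on the tail integral $I(t)$—and I would carry it out by comparing, over the interval on which the tail of $I$ accumulates, the growth of $w$ permitted by \eqref{eq:assu} with the decay rate of $I$.

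Finally, this analysis also indicates that \eqref{eq:assu} cannot be dispensed with: for a staircase-type double-exponential weight satisfying $w(k+1)\approx w(k)^{p/q}$ on the integers one checks that $w(\mu)^{p}\phi_q(\mu)\not\to\infty$ along $\mu=k$, so that $R(t)\not\to0$ on a trajectory with $\mu_t=N_t\to\infty$. Hence some regularity of exactly the type encoded by \eqref{eq:assu} is genuinely required for the conclusion in the regime $\mu_\infty=\infty$, and the proof I have outlined uses it only there.
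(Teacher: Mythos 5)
Your overall architecture coincides with the paper's: split according to whether $\mu_t=\widetilde L(0,t)\wedge\widetilde L(1,t)$ stays bounded or tends to infinity, kill the bounded case by noting that the tail integral stays bounded away from $0$ while $w(N_t)\to\infty$, and in the unbounded case use $W(0,t)W(1,t)\ge w(\mu_t)^2$ to reduce everything to the deterministic claim
\begin{equation*}
\lim_{t\to\infty}w(t)^{p}\int_t^{\infty}\frac{\rmd u}{w(u)^{q}}=\infty .
\end{equation*}
Your treatment of this claim is where the gap lies. The reduction from exponent $q$ to exponent $1$ (choosing $T$ with $\int_\mu^T w^{-1}=\tfrac12 I(\mu)$ and using \eqref{eq:assu} to get $w(T)\le 2^{1/\rho}w(\mu)$) is correct, as is the case $L:=\lim_t w(t)^{\rho}I(t)>0$. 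But in the case $L=0$ you do not give a proof: you state what would have to be shown ($I(t)\ge w(t)^{-1-o(1)}$), describe informally why it "should" follow from \eqref{eq:assu}, and announce that you "would carry it out by comparing" growth rates. That case is not peripheral — it occurs for $w(x)=\mathrm e^{x^2}$ and whenever the $\rho$ supplied by \eqref{eq:assu} is not critical — and it is exactly the content of the claim being proved, so the proposal as written does not establish the proposition.

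For comparison, the paper's proof of the displayed limit is two lines and does not invoke \eqref{eq:assu} at all: setting $y(t)=\int_t^{\infty}w(u)^{-q}\rmd u$, it computes $\int_1^{\infty}(-y'(t))\,y(t)^{-q/p}\rmd t=\tfrac{p}{p-q}\,y(1)^{1-q/p}<\infty$ and concludes that the integrand, which equals $\bigl(w(t)\,y(t)^{1/p}\bigr)^{-q}$, tends to $0$. Your instinct that some regularity is relevant here is not baseless: the inference "convergent integral, hence integrand tends to zero" requires justification, and your staircase example is precisely the kind of $w$ for which it is delicate (note, though, that \eqref{eq:assu} already forces $w$ to be continuous, since a jump of $w$ would make $w^{\rho}\int_\cdot^{\infty}w^{-1}\rmd u$ jump upward). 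So your criticism of an unconditional statement has substance, but it does not substitute for the missing argument: either complete the $L=0$ case quantitatively, or adopt the paper's direct computation with $y$ and justify the limiting step.
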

\begin{proof}
We first claim that 
\begin{align}\label{limw}
\lim_{t\to\infty}w(t)^p\int_t^{\infty}\frac{\text{d}u}{w(u)^q}=\infty.
\end{align}
Indeed, set $y(t)=\int_t^{\infty}w(u)^{-q}{\text{d}u}$ and note that $$\int_{1}^{\infty}\frac{-y'(t)}{y(t)^{q/p}}\rmd t={\frac{y(1)^{1-q/p}}{1-q/p}}<\infty.$$
Hence we must have $$-\frac{y'(t)}{y(t)^{q/p}}=\left( \frac{1}{w(t)\left(\int_{t}^{\infty} w(u)^{-q} {\rmd u}\right)^{1/p}}\right)^q\to 0$$ as $t\to\infty$ and \eqref{limw} thus immediately follows. We now examine the two following cases:

\noindent{\bf Case 1.}  $\widetilde{L}(0,\infty) \wedge \widetilde{L}(1,\infty)=\infty$, a.s..  Using \eqref{limw}, we get  
$$\frac{1}{(W(0,t)W(1,t))^{p/2}}\le \frac{1}{w(\widetilde{L}(0,t)\wedge \widetilde{L}(1,t))^{p}}=o\left(  \int_{\widetilde{L}(0,t)\wedge \widetilde{L}(1,t)}^{\infty}\frac{\text{d}u}{w(u)^q}\right).$$

\noindent{\bf Case 2.} $\widetilde{L}(0,\infty) \wedge \widetilde{L}(1,\infty)<\infty$. Note that 
$$\int_{\widetilde{L}(0,t) \wedge \widetilde{L}(1,t)}^{\infty}\frac{\text{d}u}{w(u)^q}\ge \int_{\widetilde{L}(0,\infty) \wedge \widetilde{L}(1,\infty)}^{\infty}\frac{\text{d}u}{w(u)^q}>0, \forall t\ge 0$$ while 
$$\lim_{t \ti} \frac{1}{(W(0,t)W(1,t))^{p/2}}= 0.$$ 
This ends the proof  of the lemma. 
\end{proof}

{Using Theorem~\ref{nonconvergence}, we are going to prove that  $W(0,t)$ and $W(1,t)$ cannot both grow with the same order.}
\begin{proposition}\label{lem:liminf}We have that  $$\liminf_{t\to\infty}  \frac{W(0,t)\wedge W(1,t)}{W(0,t)\vee W(1,t)}=0, {\qquad a.s..}$$
\end{proposition}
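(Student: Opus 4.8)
The plan is to apply the non-convergence theorem (Theorem~\ref{nonconvergence}) to the process $\mathbf Z$ of \eqref{def:Zt}, using a family of events that encodes the two weights staying comparable. Write $m(t)=\widetilde L(0,t)\wedge\widetilde L(1,t)$, and recall that since $\widetilde L(0,t)+\widetilde L(1,t)=t+2$ the larger of the two local times is at least $(t+2)/2$, so $W(0,t)\vee W(1,t)=w\big(\widetilde L(0,t)\vee\widetilde L(1,t)\big)\to w(\infty)=\infty$. It therefore suffices to show that for each rational $\varepsilon\in(0,1)$ the event $E_\varepsilon=\big\{\liminf_{t}\tfrac{W(0,t)\wedge W(1,t)}{W(0,t)\vee W(1,t)}\ge\varepsilon\big\}$ is null; a union over $\varepsilon$ then yields the claim. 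On $E_\varepsilon$ both $W(0,t)$ and $W(1,t)$ tend to infinity, hence $\widetilde L(0,\infty)\wedge\widetilde L(1,\infty)=\infty$, and \eqref{Linfty} forces $Z_\infty=0$; thus $E_\varepsilon$ is an event of the type $\Gamma$ with target value $\varrho=0$.

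First I would set $\Gamma_t=\big\{W(0,t)\wedge W(1,t)\ge\varepsilon\,(W(0,t)\vee W(1,t))\big\}\in\mathcal{F}_t$. Right-continuity of $t\mapsto W(i,t)$ (the local times are continuous and $w$ right-continuous) gives \eqref{intersection}, and by the previous paragraph $\Gamma:=\bigcup_n\bigcap_{t\ge n}\Gamma_t=E_\varepsilon\subseteq\{Z_\infty=0\}$, so \eqref{def.Gamma} holds with $\varrho=0$. The process $\mathbf Z$ is already known to be good. For the jump and variation bound I take $\beta_t=\big(W(0,t)W(1,t)\big)^{-1}$: the continuous part $H_t$ contributes no jumps, so $|\Delta Z_t|\le\beta_t$, and since $A$ is increasing with $A_\infty-A_t\le\beta_t$ one also gets $\int_t^\infty|F_{u-}|\,\rmd\mathrm{Va}_u(A)\le\beta_t$, which is \eqref{jumpsize}.

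For the tail process I would take the \emph{adapted} quantity $\alpha_t=\int_{m(t)}^{\infty}\frac{\rmd v}{w(v)^3}$, which is non-increasing and tends to $0$ on $\Gamma$, with $\widetilde\alpha_t=-\tfrac{\rmd}{\rmd t}\alpha_t$. Then \eqref{lim.alpbeta} is exactly Proposition~\ref{lem:alpha} with $p=4$ and $q=3$: it gives $\beta_t^2=\big(W(0,t)W(1,t)\big)^{-2}=o\big(\int_{m(t)}^\infty w(v)^{-3}\rmd v\big)=o(\alpha_t)$, i.e.\ $\beta_t/\sqrt{\alpha_t}\to0$. For \eqref{quadvar} I would compare the angle bracket rate $G_t^2\Lambda_t=\Lambda_t/(W(0,t)W(1,t))^2$ of the martingale part with $\widetilde\alpha_t$. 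The clean way is in integrated form, which is all that the proof of Theorem~\ref{nonconvergence} actually uses: on any time interval contained in $\Gamma$ the change of variables $u\mapsto\widetilde L(i,u)$ (valid since $\rmd\widetilde L(i,u)=\rmd u$ while $\widetilde X_u=i$) converts $\int G_u^2\Lambda_u\,\rmd u$ into $\int w(v)^{-3}\rmd v$ up to the comparability $W(0,u)\asymp_\varepsilon W(1,u)$ that \emph{defines} $\Gamma_u$, giving $\tfrac1\kappa\,(\alpha_s-\alpha_r)\le\int_s^r G_u^2\Lambda_u\,\rmd u\le\kappa\,(\alpha_s-\alpha_r)$ with $\kappa=\kappa(\varepsilon)$.

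With these choices Theorem~\ref{nonconvergence} yields $\P(\Gamma)=\P(E_\varepsilon)=0$, and the union over rational $\varepsilon\in(0,1)$ completes the proof. The hard part will be the verification of \eqref{quadvar}: one must match the angle bracket $\int_0^\bullet G_u^2\Lambda_u\,\rmd u$ of the martingale term to the adapted tail $\alpha_t$, and the naive pathwise identity $\widetilde\alpha_t=G_t^2\Lambda_t$ produces the \emph{non-adapted} tail $\int_t^\infty G_u^2\Lambda_u\,\rmd u$; the remedy is to pass to the adapted local-time integral $\int_{m(t)}^\infty w(v)^{-3}\rmd v$ and carry out the comparison through the change of variables, where the defining comparability of $W(0,\cdot)$ and $W(1,\cdot)$ on $\Gamma$ is precisely what makes the two integrals comparable. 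The companion decay estimate \eqref{lim.alpbeta} is then supplied verbatim by Proposition~\ref{lem:alpha}.
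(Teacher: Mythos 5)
Your overall strategy is exactly the paper's: apply Theorem~\ref{nonconvergence} to the process $\mathbf Z$ of \eqref{def:Zt} with the comparability events (the paper uses $\Gamma^{(k)}_t=\{1/k\le W(1,t)/W(0,t)\le k\}$ and a union over $k$, which is your $\Gamma_t$ with $\varepsilon=1/k$), the same $\beta_t=(W(0,t)W(1,t))^{-1}$, and Proposition~\ref{lem:alpha} with $p=4$, $q=3$ for \eqref{lim.alpbeta}. The one place you diverge is the choice of $(\alpha_t,\widetilde\alpha_t)$, and it is the one place where your argument does not close as written. With $\alpha_t=\int_{m(t)}^\infty w(v)^{-3}\,\rmd v$ and $\widetilde\alpha_t=-\frac{\rmd}{\rmd t}\alpha_t$, which equals $w(m(t))^{-3}$ times the indicator that the walker currently sits at the vertex realizing the minimum local time, the hypothesis \eqref{quadvar} is a \emph{pointwise} two-sided bound, and its upper half $G_t^2\Lambda_t\le\kappa\widetilde\alpha_t$ fails whenever the walker sits at the vertex with the \emph{larger} local time: there $\widetilde\alpha_t=0$ while $G_t^2\Lambda_t>0$. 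You acknowledge this and propose to verify only an integrated form; that is indeed repairable, since the proof of Theorem~\ref{nonconvergence} only uses $\int_t^\infty G_u^2\Lambda_u\,\rmd u\ge\kappa^{-1}\alpha_t$ on $\bigcap_{s\ge t}\Gamma_s$ and $\int_{S_t}^{U_t}G_u^2\Lambda_u\,\rmd u\le\kappa\,\alpha_{S_t}$ on $\{S_t<U_t\}$, but it means you cannot invoke the theorem as a black box and would have to restate and re-derive it with integrated hypotheses. The paper sidesteps this entirely by choosing $\widetilde\alpha_t=\1_{\{\widetilde X_t=0\}}W(0,t)^{-3}+\1_{\{\widetilde X_t=1\}}W(1,t)^{-3}$, which is pointwise comparable on $\Gamma^{(k)}_t$ (with $\kappa=k$) to $G_t^2\Lambda_t=\1_{\{\widetilde X_t=1\}}W(1,t)^{-2}W(0,t)^{-1}+\1_{\{\widetilde X_t=0\}}W(0,t)^{-2}W(1,t)^{-1}$, and whose associated $\alpha_t=\int_{\widetilde L(0,t)}^\infty w(v)^{-3}\rmd v+\int_{\widetilde L(1,t)}^\infty w(v)^{-3}\rmd v$ is adapted and sandwiched between your $\alpha_t$ and twice it, so Proposition~\ref{lem:alpha} still gives \eqref{lim.alpbeta}. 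Substituting that choice into your write-up makes \eqref{quadvar} hold verbatim and the rest of your argument goes through unchanged.
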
 
\begin{proof}
For each $t\in [0,\infty)$ and $k\in \N$, set 
\begin{equation}\label{eq:defg} \Gamma^{(k)}_t:=\left\{\frac{1}{k}\le \frac{W(1,t)}{W(0,t)}\le k\right\}\quad \text{and}\quad \Gamma^{(k)}:=\left\{ \exists T: \frac{1}{k}\le \frac{W(1,t)}{W(0,t)}\le k, \forall t\ge T \right\}=\bigcup_{n=1}^{\infty}\bigcap_{t\ge n} \Gamma^{(k)}_t.
\end{equation}
We observe that as at least one of the local times must diverge to $\infty$, i.e. $L(0,\infty)\vee L(1,\infty) =\infty$, we have that $\Gamma^{(k)}\subseteq \left\{ \widetilde L(0,\infty)=\widetilde L(1,\infty)=\infty\right\}=\left\{\lim_{t\to\infty} Z_t=0\right\}$. Moreover,
$$\left\{\liminf_{t\to\infty}  \frac{W(0,t)\wedge W(1,t)}{W(0,t)\vee W(1,t)}>0\right\}=\bigcup_{k=1}^{\infty}\Gamma^{(k)}.$$
Hence, we only have to show that for each $k\ge 1$, \begin{align}\label{no.Gk}\P\left(\Gamma^{(k)}\right)=0.
\end{align}
Now let $k$ be a fixed positive integer. For $t\ge0$, we define 
\begin{equation}\widetilde{\alpha}_t:= \frac{\1_{\{\widetilde X_t=0\}}}{W(0,t)^3}+\frac{\1_{\{\widetilde X_t=1\}}}{W(1,t)^3}\quad \text{and}\quad \alpha_t:=\int_t^{\infty}\widetilde{\alpha}_u\rmd u \stackrel{\mbox{\tiny change of var.}}{=}\int_{\widetilde L(0,t)}^{\infty}\frac{\text{d}u}{w(u)^3}+\int_{\widetilde L(1,t)}^{\infty}\frac{\text{d}u}{w(u)^3}.\end{equation}
It is clear that $\alpha_t$ and $\widetilde{\alpha}_t$ are both $\mathcal{F}_t$-measurable, as $w$ is deterministic. We now apply Theorem~\ref{nonconvergence} to the process $(Z_t)_{t\ge0}$ defined as in (\ref{def:ZZt}) and the family of events $(\Gamma^{(k)}_t)_{t\ge0}$.  
Let us now verify assumptions (\ref{jumpsize})-(\ref{lim.alpbeta})-(\ref{quadvar}). For each $t\ge0$, set $$\beta_t:= \frac{1}{W(0,t)W(1,t)}.$$ We notice that
\begin{align*}|\Delta Z_t|=  \frac{| \1_{\{\widetilde X_t=1\}}- \1_{\{\widetilde X_{t-}=1\}}|}{W(0,t)W(1,t)}\le\beta_t,
\end{align*}
and, as $|F_{t}|\le 1$, we have
\begin{align}\label{FVubound}
\int_{t}^{\infty}|F_{u-}|\text{d}\text{Va}_u(A)
\le A_{\infty}-A_t=\beta_t.
\end{align}
The condition \eqref{jumpsize} is thus verified. The condition (\ref{lim.alpbeta}) immediately follows by applying Proposition~\ref{lem:alpha} for $p=4$ and $q=3$. On the other hand, on the event $\Gamma^{(k)}_t$ (defined in \eqref{eq:defg}) 
$$\frac{1}{k}\widetilde\alpha_t\le G^2_{t} \Lambda_{t} =\frac{\1_{\{\widetilde X_t=1\}}}{W(1,t)^2W(0,t)}+ \frac{\1_{\{\widetilde X_t=0\}}}{W(0,t)^2W(1,t)}  \le k \widetilde{\alpha}_t.$$
The condition (\ref{quadvar}) is thus fulfilled. Using Theorem~\ref{nonconvergence}, we obtain \eqref{no.Gk} which concludes the proof.  
\end{proof}
\begin{proposition} \label{martinq}
For $s\ge t\ge0$, we have
\begin{align}\label{Mlower}
\left( \int_t^{s} G_{u-}{{\rm d}M_u}\right)^2 \ge \frac{1}{2}\left(\int_{t}^{s}\left[ \frac{1_{\{\widetilde X_u=0\}}}{W(0,u)}-\frac{1_{\{\widetilde X_u=1\}}}{W(1,u)}\right]{\rm d}u\right)^2-\frac{4}{(W(0,t)W(1,t))^2}
\end{align}
and
\begin{align}\label{Mupper}
\left| \int_t^{s} G_{u-}{{\rm d}M_u} \right| \le 2 \int_{\widetilde{L}(0,t)\wedge \widetilde{L}(1,t)}^{\infty} \frac{{\rm d}u}{w(u)}+\frac{2}{W(0,t)W(1,t)}.
\end{align}
\end{proposition}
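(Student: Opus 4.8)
The plan is to reduce both inequalities to the martingale-free identity for $H_t$ established in Proposition~\ref{pr:deco}. Combining that identity with $G_{u-}=-1/(W(0,u-)W(1,u-))$ gives
\begin{equation*}
\int_0^t G_{u-}\rmd M_u = H_t - \frac{\1_{\{\widetilde X_t=1\}}}{W(0,t)W(1,t)} - \int_0^t \1_{\{\widetilde X_{u-}=1\}}\rmd A_u .
\end{equation*}
Subtracting the values at $t$ and $s$, for $s\ge t$ I would write
\begin{equation*}
\int_t^s G_{u-}\rmd M_u = (H_s - H_t) - \mathcal{E}_{t,s}, \qquad \mathcal{E}_{t,s} := \frac{\1_{\{\widetilde X_s=1\}}}{W(0,s)W(1,s)} - \frac{\1_{\{\widetilde X_t=1\}}}{W(0,t)W(1,t)} + \int_t^s \1_{\{\widetilde X_{u-}=1\}}\rmd A_u .
\end{equation*}
The point is that the leading term is exactly the integral appearing on the right of \eqref{Mlower}: by the time change used in the proof of Proposition~\ref{pr:deco},
\begin{equation*}
H_s - H_t = \int_t^s \left[\frac{\1_{\{\widetilde X_u=0\}}}{W(0,u)} - \frac{\1_{\{\widetilde X_u=1\}}}{W(1,u)}\right]\rmd u ,
\end{equation*}
while $\mathcal{E}_{t,s}$ is a negligible correction of order $1/(W(0,t)W(1,t))$.

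Next I would bound $|\mathcal{E}_{t,s}|$. The essential input is that $t\mapsto W(i,t)$ is non-decreasing (local times increase and $w$ is increasing), so $W(i,s)\ge W(i,t)$ and hence $1/(W(0,s)W(1,s))\le 1/(W(0,t)W(1,t))$. The boundary term is a difference of two quantities each lying in $[0,\,1/(W(0,t)W(1,t))]$, so its absolute value is at most $1/(W(0,t)W(1,t))$. Since $\mathbf{A}$ is non-decreasing with $A_s-A_t=\tfrac{1}{W(0,t)W(1,t)}-\tfrac{1}{W(0,s)W(1,s)}\le \tfrac{1}{W(0,t)W(1,t)}$, the integral term is bounded in absolute value by $A_s-A_t$, again at most $1/(W(0,t)W(1,t))$. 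Adding the two estimates yields
\begin{equation*}
|\mathcal{E}_{t,s}| \le \frac{2}{W(0,t)W(1,t)} .
\end{equation*}

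Finally I would read off the two inequalities. For \eqref{Mupper}, the triangle inequality gives $|\int_t^s G_{u-}\rmd M_u| \le |H_s - H_t| + |\mathcal{E}_{t,s}|$; bounding each of the two tail integrals inside $H_s-H_t$ by $\int_{\widetilde L(i,t)}^{\infty}\rmd u/w(u)\le \int_{\widetilde L(0,t)\wedge \widetilde L(1,t)}^{\infty}\rmd u/w(u)$ and inserting the error bound produces precisely the claimed upper bound. For \eqref{Mlower}, I would apply the elementary inequality $(a-b)^2\ge \tfrac12 a^2 - b^2$ (equivalent to $(a-2b)^2\ge 0$) with $a=H_s-H_t$ and $b=\mathcal{E}_{t,s}$, obtaining
\begin{equation*}
\left(\int_t^s G_{u-}\rmd M_u\right)^2 \ge \tfrac12 (H_s-H_t)^2 - \mathcal{E}_{t,s}^2 \ge \tfrac12 (H_s-H_t)^2 - \frac{4}{(W(0,t)W(1,t))^2},
\end{equation*}
which is \eqref{Mlower}. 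I do not expect a serious obstacle here: the argument is essentially bookkeeping around the decomposition of Proposition~\ref{pr:deco}, and the only step requiring genuine care is verifying that both pieces of $\mathcal{E}_{t,s}$ are controlled by $1/(W(0,t)W(1,t))$, for which the monotonicity of $t\mapsto W(i,t)$ is the decisive fact.
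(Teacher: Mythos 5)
Your proof is correct and follows essentially the same route as the paper: both start from the decomposition of Proposition~\ref{pr:deco} to write $\int_t^s G_{u-}\,\mathrm{d}M_u=(H_s-H_t)-\mathcal{E}_{t,s}$, bound the boundary term and the $\mathrm{d}A$ integral each by $1/(W(0,t)W(1,t))$ using monotonicity of $t\mapsto W(i,t)$, and then apply $(a-b)^2\ge\tfrac12 a^2-b^2$ for the lower bound and the triangle inequality with the tail-integral estimate for the upper bound. No gaps.
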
 
\begin{proof}
Recall from (\ref{Ht}) that
\begin{align*}\int_t^s G_{u-}{{\rm d}M_u}&=\int_{t}^{s}\left[\frac{\1_{\{ \widetilde X_u=0 \}}}{W(0,u)}-\frac{\1_{\{ \widetilde X_u=1\}}}{W(1,u)}\right]\text{d}u\\& - \left( \frac{\1_{\{\widetilde X_s=1 \}}}{W(0,s)W(1,s)} -\frac{\1_{\{\widetilde X_t=1 \}}}{W(0,t)W(1,t)}\right)-\int_t^s \1_{\{\widetilde X_{u-}=1 \}}\text{d}A_u.
\end{align*}
The inequality (\ref{Mlower}) is  obtained by using the inequality $(a-b)^2\ge \frac{1}{2}a^2- b^2$ and the fact that for $s\ge t\ge0$
\begin{align}\label{eq:ineqW} \left| \frac{\1_{\{\widetilde X_s=1 \}}}{W(0,s)W(1,s)} -\frac{\1_{\{\widetilde X_t=1 \}}}{W(0,t)W(1,t)}\right|& \le \frac{1}{W(0,t)W(1,t)},\\
 \label{eq:ineqW2}\left| \int_t^s \1_{\{\widetilde X_{u-}=1 \}}\text{d}A_u\right| & \le \frac{1}{W(0,t)W(1,t)}. \end{align}
On the other hand, inequality  (\ref{Mupper}) is also obtained by combining \eqref{eq:ineqW}-\eqref{eq:ineqW2} and
$$
\int_{t}^{s}\left[\frac{\1_{\{ \widetilde X_u=0 \}}}{W(0,u)}-\frac{\1_{\{ \widetilde X_u=1\}}}{W(1,u)}\right]\text{d}u\le \int_{t}^{\infty}\left[\frac{\1_{\{ \widetilde X_u=0 \}}}{W(0,u)}+\frac{\1_{\{ \widetilde X_u=1\}}}{W(1,u)}\right]\text{d}u= \int_{\widetilde{L}(0,t)}^{\infty}\frac{\text{d}u}{w(u)}+\int_{\widetilde{L}(1,t)}^{\infty}\frac{\text{d}u}{w(u)}.$$
\end{proof}
\begin{proposition}\label{bounded} Assume that $$\int_0^\infty\frac{\rmd u}{w(u)}<\infty$$ and that the function $w$  satisfies the Assumptions of Theorem~\ref{th:main} b),  then for any fixed real  $\varrho$, we have 
$$
\P\left(Z_{\infty}=\varrho\right)=0.
$$
Moreover, the random variable $\widetilde{L}(0,\infty)\wedge \widetilde{L}(1,\infty)$ is almost surely finite and its distribution has no atoms. \end{proposition}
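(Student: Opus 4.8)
The plan is to prove the single quantitative statement \eqref{eq:aim}, namely $\P(Z_\infty=\varrho)=0$ for every deterministic $\varrho\in\R$; the excerpt has already reduced the finiteness and non-atomicity of $\widetilde L(0,\infty)\wedge\widetilde L(1,\infty)$ to this, via \eqref{Linfty} and the equivalence between atoms of $Z_\infty$ and atoms of $\widetilde L(0,\infty)\wedge\widetilde L(1,\infty)$. First I would record the explicit form of the limit: since $\int_1^\infty\frac{\rmd u}{w(u)}<\infty$ and the correction term $\frac{\1_{\{\widetilde X_t=1\}}}{W(0,t)W(1,t)}$ vanishes as soon as one weight diverges, one gets $Z_\infty=\int_{\widetilde L(1,\infty)}^{\infty}\frac{\rmd u}{w(u)}$ on $\{\widetilde L(1,\infty)<\infty\}$, $Z_\infty=-\int_{\widetilde L(0,\infty)}^{\infty}\frac{\rmd u}{w(u)}$ on $\{\widetilde L(0,\infty)<\infty\}$, and $Z_\infty=0$ precisely on $\{\widetilde L(0,\infty)\wedge\widetilde L(1,\infty)=\infty\}$. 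As $s\mapsto\int_s^\infty\frac{\rmd u}{w(u)}$ is strictly decreasing, this splits the target into three regimes according to the sign of $\varrho$, which I would treat separately.

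For $\varrho\neq0$, say $\varrho>0$, the event $\{Z_\infty=\varrho\}$ forces $\widetilde L(1,\infty)=c$, where $c$ is the unique solution of $\int_c^\infty\frac{\rmd u}{w(u)}=\varrho$, and forces the process to be eventually absorbed at $0$. I would prove $\P(\widetilde L(1,\infty)=c)=0$ directly by absolute continuity, rather than through Theorem~\ref{nonconvergence}. Let $E_n$ denote the total local time accumulated at $1$ by the end of the $n$-th visit to $1$. During that visit $\widetilde L(0,\cdot)$ is frozen, so the visit duration is, conditionally on the configuration at its start, an exponential variable with strictly positive finite rate $w(\widetilde L(0,\cdot))$; hence each $E_n$ has a non-atomic law. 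On $\{\widetilde L(1,\infty)<\infty\}$ the number $K$ of visits to $1$ is finite and $\widetilde L(1,\infty)=E_K$, so $\{\widetilde L(1,\infty)=c\}\subseteq\bigcup_{n\ge1}\{E_n=c\}$ is a countable union of null events. The case $\varrho<0$ is symmetric.

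The core of the proof is $\varrho=0$, i.e.\ ruling out $\{\widetilde L(0,\infty)=\widetilde L(1,\infty)=\infty\}$, and this is where Theorem~\ref{nonconvergence} and assumption \eqref{eq:assu} enter. Proposition~\ref{lem:liminf} already gives $\liminf_t \frac{W(0,t)\wedge W(1,t)}{W(0,t)\vee W(1,t)}=0$ a.s., so on the both-infinite event the two weights are never comparable for all large $t$; equivalently the balanced sub-case (eventual membership in some $\Gamma^{(k)}$) carries no mass. It remains to exclude the both-infinite event in the unbalanced regime. I would apply Theorem~\ref{nonconvergence} to the good process $\mathbf Z$ with $\varrho=0$, keeping $\beta_t=\frac1{W(0,t)W(1,t)}$ (which bounds $|\Delta Z_t|$ and the finite-variation tail $\int_t^\infty|F_{u-}|\rmd\mathrm{Va}_u(A)$ exactly as in Proposition~\ref{lem:liminf}), and choosing a family $(\Gamma_t)$ together with a comparison density $\widetilde\alpha_t$ adapted to the dominance of one weight so that \eqref{quadvar} holds for $G_t^2\Lambda_t$. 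Proposition~\ref{lem:alpha} supplies the tail comparison feeding \eqref{lim.alpbeta}, while Proposition~\ref{martinq} furnishes the two-sided control of $\int_t^s G_{u-}\rmd M_u$ used inside the argument. Assembling these gives $\P(Z_\infty=0)=0$, hence $\widetilde L(0,\infty)\wedge\widetilde L(1,\infty)<\infty$ a.s.

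The hard part is precisely this last step. Because $G_t^2\Lambda_t=\frac{\1_{\{\widetilde X_t=0\}}}{W(0,t)^2W(1,t)}+\frac{\1_{\{\widetilde X_t=1\}}}{W(1,t)^2W(0,t)}$ couples the two weights, no single decoupled power $\widetilde\alpha_t=\frac{\1_{\{\widetilde X_t=0\}}}{W(0,t)^a}+\frac{\1_{\{\widetilde X_t=1\}}}{W(1,t)^a}$ (whose tail integral is the $\mathcal F_t$-measurable $\int_{\widetilde L(0,t)}^\infty w(v)^{-a}\rmd v+\int_{\widetilde L(1,t)}^\infty w(v)^{-a}\rmd v$) stays comparable to it once the two weights are of different orders, and this is exactly the configuration that survives Proposition~\ref{lem:liminf}. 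Reconciling the measurable lower bound on the quadratic-variation tail $\int_t^\infty G_u^2\Lambda_u\,\rmd u$ with \eqref{lim.alpbeta} is where the monotonicity of $t\mapsto w(t)^{\rho}\int_t^\infty\frac{\rmd s}{w(s)}$ must be used: it quantifies how far the dominated weight may lag behind the dominant one and thereby forces the martingale tail to dominate the squared finite-variation tail $\beta_t^2$. I expect verifying \eqref{quadvar} and \eqref{lim.alpbeta} simultaneously in this unbalanced regime to be the main technical obstacle.
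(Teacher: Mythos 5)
Your reduction of the proposition to \eqref{eq:aim}, and your diagnosis of why Theorem~\ref{nonconvergence} cannot be applied off the shelf (the density $G_t^2\Lambda_t$ couples the two weights, so no $\mathcal F_t$-measurable $\widetilde\alpha_t$ satisfies \eqref{quadvar} in the unbalanced regime, which is exactly the regime left over by Proposition~\ref{lem:liminf}), are both correct. But the proposal has two genuine gaps.

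First, the ``elementary'' argument for $\varrho\neq0$ fails. On the two-vertex graph the jump rate out of the current vertex is always at least $w(1)=1$, so the process a.s.\ alternates between $0$ and $1$ forever; the number $K$ of visits to $1$ is a.s.\ \emph{infinite} even on $\{\widetilde L(1,\infty)<\infty\}$ --- this is precisely what happens under strong reinforcement. Hence $\widetilde L(1,\infty)$ is an infinite sum of visit durations, not $E_K$ for a finite $K$, and the inclusion $\{\widetilde L(1,\infty)=c\}\subseteq\bigcup_{n}\{E_n=c\}$ is false. Non-atomicity of each partial sum $E_n$ says nothing about the limit of an infinite sum; showing that this limit has no atoms is essentially the content of Proposition~\ref{bounded} itself, so the step is circular.

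Second, for $\varrho=0$ you stop exactly where the work has to be done: you acknowledge that verifying \eqref{quadvar} and \eqref{lim.alpbeta} simultaneously in the unbalanced regime is ``the main technical obstacle'' without resolving it. The paper does not resolve it by a clever choice of $(\Gamma_t,\widetilde\alpha_t)$ either; it abandons Theorem~\ref{nonconvergence} here and reruns the two-sided second-moment scheme of Propositions~\ref{lem1} and~\ref{lem2} along a random discrete sequence of stopping times $\mathfrak t_n$, defined as the successive times at which $\int_{\widetilde L(0,t)\vee\widetilde L(1,t)}^{\infty}\frac{\rmd u}{w(u)}<\frac12\int_{\widetilde L(0,t)\wedge\widetilde L(1,t)}^{\infty}\frac{\rmd u}{w(u)}$; these are a.s.\ finite by combining \eqref{eq:assu} with Proposition~\ref{lem:liminf}, which is the only place the extra hypothesis enters. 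At such times the scale is $\delta_n=\bigl(\int_{\widetilde L(0,\mathfrak t_n)\wedge\widetilde L(1,\mathfrak t_n)}^{\infty}\frac{\rmd u}{w(u)}\bigr)^2$, the lower bound on $\E[\widehat Z_{R_n}^2\mid\mathcal F_{\mathfrak t_n}]$ comes from \eqref{Mlower} of Proposition~\ref{martinq} together with the defining unbalance (the squared drift integral is at least $\delta_n/4$), Proposition~\ref{lem:alpha} with $p=2$, $q=1$ makes the correction terms $o(\sqrt{\delta_n})$, and a L\'evy zero--one law closes the argument. Crucially, this argument is run for an arbitrary fixed $\varrho$ at once, so no separate treatment of $\varrho\neq0$ is needed. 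Your proposal names the obstacle but supplies no mechanism to overcome it.
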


\begin{proof}
{Recall from \eqref{eq:assu} that there exists $\rho>0$ such that $t\mapsto w(t)^{\rho}\int_t^{\infty}\frac{\rmd s}{w(s)}$ is non-increasing. 
It follows that}
$$\frac{\int_{\widetilde{L}(0,t)\vee \widetilde{L}(1,t)}^{\infty} (1/w(u)){\text{d}u} }{\int_{\widetilde{L}(0,t)\wedge \widetilde{L}(1,t)}^{\infty}(1/w(u)){\text{d}u}} \le \left(\frac{W(0,t)\wedge W(1,t)}{W(0,t)\vee W(1,t)}\right)^{\rho}.$$
Hence by virtue of Proposition~\ref{lem:liminf}, we obtain {that a.s.} 
\begin{equation}\label{eq:intozero}
\liminf_{t\to\infty}\frac{\int_{\widetilde{L}(0,t)\vee \widetilde{L}(1,t)}^{\infty}(1/w(u)){\text{d}u}}{\int_{\widetilde{L}(0,t)\wedge \widetilde{L}(1,t)}^{\infty}(1/w(u)){\text{d}u}}=0.
\end{equation}
We define the sequence of stopping times $(\mathfrak{t}_n)_{n\ge0}$ such that $\mathfrak{t}_0=0$ and for all $n\ge1$,
\begin{equation}
\label{def.tn}\mathfrak{t}_n =\inf\left\{{t}\ge \mathfrak{t}_{n-1}+1 \ : \ \int_{\widetilde{L}(0,t)\vee \widetilde{L}(1,t)}^{\infty}\frac{\text{d}u}{w(u)} <\frac{1}{2}\int_{\widetilde{L}(0,t)\wedge \widetilde{L}(1,t)}^{\infty}\frac{\text{d}u}{w(u)}\right\}.
\end{equation}
Notice that using \eqref{eq:intozero}, we infer $\P(\mathfrak{t}_n<\infty)=1$ for each $n\ge1$ and $\mathfrak{t}_n\to\infty$ as $n\to\infty.$

Let $\varrho$ be a fixed real number and set $\widehat{Z}_t={Z}_t-\varrho$ for all $t\ge0$. Define $\mathcal{A}=\{ \widehat{Z}_{\infty}=0\}.$ 
To complete the proof, it is sufficient to show that (for any $\varrho\in \R$)
\begin{equation}\label{non.A}
\P(\mathcal{A})=0.
\end{equation}
Indeed, assuming \eqref{non.A}, the distribution of $Z_{\infty}$ has no atoms.
{Recall from \eqref{def:Zt} and \eqref{Linfty} that $\{\widetilde L(0,\infty)\wedge \widetilde L(1,\infty)=\infty\}=\{Z_{\infty}=0\}$ and the distribution of $\widetilde L(0,\infty)\wedge \widetilde L(1,\infty)$ has no atoms if and only if the distribution of $Z_{\infty}$ has no atoms.} As a consequence, the random variable $\widetilde L(0,\infty)\wedge \widetilde L(1,\infty)$ is finite almost surely and its distribution has no atoms.

The rest of this proof is devoted to prove \eqref{non.A}. For $n\ge1$, set
\begin{equation}\label{def.delta}\delta_{n}:=\left(\int_{\widetilde{L}(0,\mathfrak{t}_n)\wedge \widetilde{L}(1,\mathfrak{t}_n)}^{\infty}\frac{\text{d}u}{w(u)}\right)^2
\end{equation}
and \begin{equation}\label{def.Tn} R_n=\inf\left\{\mathfrak{t}_m:  m\ge n, |\widehat{Z}_{\mathfrak{t}_m}|>8 \sqrt{\delta_{n}}\right \}.
\end{equation}
It is clear that $\delta_{n}$ is $\mathcal{F}_{\mathfrak{t}_n}$-measurable and $R_n$ is a stopping time {w.r.t.} the filtration $(\mathcal{F}_{\mathfrak{t}_n})_{n\ge1}$. 

{Using similar techniques as in the proofs of Proposition~\ref{lem1} and Proposition~\ref{lem2}, we next prove the followings claims.}

\noindent {\bf Claim 1.} There exists a  $(\mathcal{F}_{\mathfrak{t}_n})$-adapted sequence $(b_n)_{n\ge1}$ such that for all $n\ge 1$
\begin{align}
\label{prob1} \P\left(\mathcal{A}\cap \{{R_n}=\infty\}\big| \mathcal{F}_{\mathfrak{t}_n}\right)\le b_n
\end{align}
and $\lim_{n\to\infty} b_n =: b_\infty <1$.\\
\noindent {\bf Claim 2.} There exist a $(\mathcal{F}_{\mathfrak{t}_n})$-adapted sequence  $(c_n)_{n\ge1}$ and a subsequence of stopping times $(\mathfrak{t}_{N_k})_{k\ge1}\subset(\mathfrak{t}_n)_{n\ge1}$ such that for each $k\ge1$, $\mathfrak{t}_{N_k}$ is finite, $\lim_{k\to\infty}\mathfrak{t}_{N_k}\to\infty$ a.s. and
\begin{align}\label{prob2}
\P\left(\mathcal{A}^c\big|\mathcal{F}_{R_{N_k}}\right)\ge c_{N_k} \quad \text{on the event}\ \{R_{N_k}<\infty\} 
\end{align}
and 
$\lim_{n\to\infty}c_n=:c_{\infty}> 0$.

Let us first demonstrate Claim 1.  By virtue of \eqref{isometry}-\eqref{M.integrak} and \eqref{Mlower}, we have
\begin{equation}\label{Mpart}
\begin{aligned}
& \E\left[\int_{\mathfrak{t}_n}^{R_n} {G_u^2\Lambda_u\text{d}u}\big| \mathcal{F}_{\mathfrak{t}_n} \right]  =\E\left[ \int_{\mathfrak{t}_n}^{R_n} G_{u-}^2\text{d}\left\langle M\right\rangle_u\big| \mathcal{F}_{\mathfrak{t}_n} \right]\\
&  \stackrel{\eqref{M.integrak}}{=}\E\left[\left\langle \int_{0}^{\bullet} {G_{u-}\text{d}M_u}\right\rangle_{R_n}-\left\langle \int_{0}^{\bullet} {G_{u-}\text{d}M_u}\right\rangle_{\mathfrak{t}_n}
 \big| \mathcal{F}_{\mathfrak{t}_n} \right] \stackrel{\eqref{isometry}}{ =}\E\left[\left( \int_{\mathfrak{t}_n}^{R_n} G_{u-}\text{d}M_u\right)^2\big| \mathcal{F}_{\mathfrak{t}_n} \right]\\
 & \stackrel{\eqref{Mlower}}{\ge}   \frac{1}{2}\E\left[\left(\int_{\mathfrak{t}_n}^{R_n}\left[ \frac{1_{\{\widetilde X_u=0\}}}{W(0,u)}-\frac{1_{\{\widetilde X_u=1\}}}{W(1,u)}\right]\text{d} u\right)^2 \big| \mathcal{F}_{\mathfrak{t}_n}\right]-\frac{4}{\left(W(0,{\mathfrak{t}_n})W(1,{\mathfrak{t}_n})\right)^2}\\
 & \ge \frac{1}{2}\E\left[ \left(\int_{\widetilde{L}(0,\mathfrak{t}_n)}^{\infty}\frac{\text{d}u}{w(u)}-\int_{\widetilde{L}(1,\mathfrak{t}_n)}^{\infty}\frac{\text{d} u}{w(u)} \right)^2\1_{ \mathcal{A}\cap \{R_n=\infty\}} \big|  \mathcal{F}_{\mathfrak{t}_n}\right]- \frac{4}{\left(W(0,{\mathfrak{t}_n})W(1,{\mathfrak{t}_n})\right)^2}\\
& \stackrel{\eqref{def.tn}+\eqref{def.delta}}{\ge}\frac{\delta_{n}}{8}\P\left(\mathcal{A}\cap \{R_n=\infty\}\big|  \mathcal{F}_{\mathfrak{t}_n}\right)- \frac{4}{\left(W(0,{\mathfrak{t}_n})W(1,{\mathfrak{t}_n})\right)^2}.
\end{aligned}
\end{equation}
On the other hand  for all $u\in [\mathfrak{t}_n,R_n)$, when the latter interval is non-empty, we have 
\begin{align}\label{Zupperbound}
\nonumber|\widehat  Z_{u}|&\stackrel{\eqref{def:ZZt}}{\le} |\widehat Z_{\mathfrak{t}_n}|+\int_{\mathfrak{t}_n}^{\infty} |F_{s-}|\rmd \text{Va}(A)_s+\left|\int_{\mathfrak{t}_n}^{u} G_{s-}\rmd M_s\right|\\
\nonumber
&\stackrel{\eqref{def.Tn}+\eqref{FVubound}+\eqref{Mupper} +\eqref{def.delta}}{\le}  8\sqrt{\delta_{n}}+ \frac{1}{W(0,\mathfrak{t}_n)W(1,\mathfrak{t}_n)} + 2\sqrt{\delta_{n}}+ \frac{2}{W(0,\mathfrak{t}_n)W(1,\mathfrak{t}_n)}\\
& = 10\sqrt{\delta_{n}}+ \frac{3}{W(0,\mathfrak{t}_n)W(1,\mathfrak{t}_n)}.
\end{align}
Hence,
\begin{align}\label{FVpart}
\nonumber \int_{\mathfrak{t}_n}^{R_n} \widehat Z_{u-} F_{u-}\text{d}A_u & \ge - \sup_{\mathfrak{t}_n\le u\le R_n}| \widehat Z_{u}|  \int_{\mathfrak{t}_n}^{\infty}   | F_{u-}|  \text{d}\text{Va}(A)_u \\
& \stackrel{\eqref{FVubound}+\eqref{Zupperbound}}{\ge} -\frac{10 \sqrt{\delta_{n}}}{W(0,{\mathfrak{t}_n})W(1,{\mathfrak{t}_n})} - \frac{3}{\left(W(0,{\mathfrak{t}_n})W(1,{\mathfrak{t}_n})\right)^2}.
\end{align}
Similarly to \eqref{2rdmoment}, we also have
\begin{align}
\label{2rdmoment2}
\E\left[\widehat{Z}_{R_n}^2 |\mathcal{F}_{\mathfrak{t}_n}\right]=\widehat Z_{\mathfrak{t}_n}^2+  \E\left[ 2\int_{\mathfrak{t}_n}^{R_n} \widehat Z_{u-}F_{u-}\text{d}A_u+  \sum_{\mathfrak{t}_n<u\le R_n} F_{u-}^2 (\Delta A_u)^2 \ \big|\ \mathcal{F}_{\mathfrak{t}_n}\right]+ \E\left[ \int_{\mathfrak{t}_n}^{R_n} G^2_{u} \Lambda_u\text{d}u \ \big|\ \mathcal{F}_{\mathfrak{t}_n} \right].
\end{align}
Combining \eqref{Mpart} and \eqref{FVpart} with \eqref{2rdmoment2}, it follows that
\begin{align}\label{eq:comb1} \mathbb{E}\left[ \widehat Z_{R_n}^2 | \mathcal{F}_{\mathfrak{t}_n}\right]&\ge 
-\frac{20 \sqrt{\delta_{n}}}{W(0,{\mathfrak{t}_n})W(1,{\mathfrak{t}_n})} - \frac{10}{\left(W(0,{\mathfrak{t}_n})W(1,{\mathfrak{t}_n})\right)^2}+ \frac{\delta_{n}}{8}\P\left( \mathcal{A}\cap \{R_n=\infty\}\big|  \mathcal{F}_{\mathfrak{t}_n}\right).\end{align}
Using \eqref{Zupperbound}, we also get
\begin{align}\label{eq:comb2} \mathbb{E}\left[\widehat Z_{R_n}^2 | \mathcal{F}_{\mathfrak{t}_n}\right]  & = \mathbb{E}\left[ \widehat Z_{R_n}^2 \1_{\mathcal{A}^c\cup \{{R_n}<\infty\}} | \mathcal{F}_{\mathfrak{t}_n}\right]  +  \mathbb{E}\left[ \widehat Z_{R_n}^2 \1_{\mathcal{A}\cap \{{R_n}=\infty\}} | \mathcal{F}_{\mathfrak{t}_n}\right] \\
\nonumber & = \mathbb{E}\left[ \widehat Z_{R_n}^2 \1_{\mathcal{A}^c\cup \{{R_n}<\infty\}} | \mathcal{F}_{\mathfrak{t}_n}\right]  \\
\nonumber & \le 2\left(\frac{9}{\left(W(0,{\mathfrak{t}_n})W(1,{\mathfrak{t}_n})\right)^2}+100\delta_{n}\right) \P\left(\mathcal{A}^c\cup \{{R_n}<\infty\} \big| \mathcal{F}_{\mathfrak{t}_n} \right).
\end{align}
Therefore, for each $n\ge 1$,
\begin{align*} \P\left(\mathcal{A}\cap \{{R_n}=\infty\}\big| \mathcal{F}_{\mathfrak{t}_n}\right)
&\stackrel{\eqref{eq:comb1}+\eqref{eq:comb2}}{\le}\frac{\frac{20 \sqrt{\delta_{n}}}{W(0,{\mathfrak{t}_n})W(1,{\mathfrak{t}_n})}+\frac{10}{\left(W(0,{\mathfrak{t}_n})W(1,{\mathfrak{t}_n})\right)^2}+2\left(\frac{9}{\left(W(0,{\mathfrak{t}_n})W(1,{\mathfrak{t}_n})\right)^2}+100\delta_{n}\right)}{ \frac{\delta_{n}}{8}+2\left(\frac{9}{\left(W(0,{\mathfrak{t}_n})W(1,{\mathfrak{t}_n})\right)^2}+100\delta_{n}\right)}:= b_n.
\end{align*}
Applying Proposition~\ref{lem:alpha} for $p=2, q=1$, we have \begin{equation}\label{W.delta}\frac{1}{W(0,\mathfrak{t}_n)W(1,\mathfrak{t}_n)}=o(\sqrt{\delta_{n}})
\end{equation} as $n\to\infty$. Thus 
$$\lim_{n\to\infty} b_n =\frac{2\times 100}{2\times 100 +\frac{1}{8}}<1.$$ 
This concludes the proof of Claim 1, and we turn now to the proof of Claim 2. Set $N_0=0$ and we define the sequence $(N_k)_{k\ge1}$ recursively as follows
\begin{equation}\label{W.delta3}
N_{k+1}=\inf\left\{n\ge N_k+1: 2\sqrt{\delta_{n}}\ge \frac{1}{W(0,\mathfrak{t}_{n})W(1,\mathfrak{t}_{n})} \right\}.
\end{equation}
Using \eqref{W.delta}, for each $k\ge 1$, $N_k$ is finite and $\lim_{k\to\infty}N_k= \infty$ a.s.. Furthermore, $\mathfrak{t}_{N_k}$ is a stopping times w.r.t. $(\mathcal{F}_{t})_{t\ge0}$ and $R_{N_k}$ is a stopping times w.r.t $(\mathcal{F}_{\mathfrak{t}_{N_k}})_{k\ge1}$. In what follows, the subscript $k$ is omitted for simplicity.
Notice that on the event $\{R_{N}<\infty\}\cap \left\{\sup_{R_{N}\le s<\infty}\left|\int_{R_{N}}^s{G_{u-}\text{d}M_u}\right|<4\sqrt{\delta_{{N}}} \right\}$, for $s>R_{N}$ we have
\begin{align*}
|Z_s|=\left|Z_{R_{N}}-\int_{R_{N}}^{s} F_{u-}\text{d}A_u -\int_{R_{N}}^s G_{u-} \text{d}M_{N}\right| 
& \ge \left|Z_{R_{N}} \right| - \int_{R_{N}}^{\infty} |F_{u-}|\text{d}\text{Va}(A)_u -\left| \int_{R_{N}}^s G_{u-} \text{d}M_u\right|\\
& \stackrel{\eqref{def.Tn}+\eqref{FVubound}}{\ge} 8\sqrt{\delta_{{N}}}-\frac{1}{W(0,R_{N})W(1,R_{N})}
-4\sqrt{\delta_{{N}}}\\
& \ge 4\sqrt{\delta_{{N}}} - \frac{1}{W(0,\mathfrak{t}_{N})W(1,\mathfrak{t}_{N})}\\
& \stackrel{\eqref{W.delta3}}{\ge} \frac{1}{W(0,\mathfrak{t}_{N})W(1,\mathfrak{t}_{N})}.
\end{align*}
Hence, 
\begin{align*}\{R_{N}<\infty\}\cap \left\{\sup_{R_{N}\le s<\infty}\left|\int_{R_{N}}^s{G_{u-}\text{d}M_u}\right|<4\sqrt{\delta_{{N}}} \right\}\subseteq \left\{\liminf_{s\to\infty}|Z_s|>0\right\}= \mathcal{A}^c\end{align*}
and thus on the event $\{R_{N}<\infty\}$,
\begin{equation}\label{eqn1}\P(\mathcal{A}^c|\mathcal{F}_{R_{N}})\ge \P\left( \sup_{R_{N}\le s<\infty}\left|\int_{R_{N}}^s{G_{u-}\text{d}M_u}\right|<4\sqrt{\delta_{{N}}} \big|\mathcal{F}_{R_{N}} \right).
\end{equation}
On the other hand, the Doob's maximal inequality and (\ref{Mupper}) yield that on event $\{R_{N}<\infty\}$,
\begin{align}
\nonumber\P\left(\sup_{R_{N}\le s<\infty}\left|\int_{R_{N}}^s{G_{u-}\text{d}M_u}\right|>4\sqrt{\delta_{{N}}}\big|\mathcal{F}_{R_{N}} \right)&\le\frac{\E\left[\left(\int_{R_{N}}^{\infty}G_{u-}\text{d}M_u\right)^2\big|\mathcal{F}_{R_{N}}\right]}{16\delta_{{N}}}\\
\label{eqn2} & \le \frac{8\delta_{{N}}+\frac{8}{W(0,R_{N})^2W(1,R_{N})^2} }{16\delta_{{N}}}\le 1-c_{N},
\end{align}
where we set 
$$c_n=\frac12- \frac{1}{2\delta_{n}W(0,\mathfrak{t}_n)^2W(1,\mathfrak{t}_n)^2}.$$ 
Note that $c_{n}$ is $\mathcal{F}_{\mathfrak{t}_{n}}$-measurable and $\lim_{n\to\infty}c_n=\frac{1}{2}.$ Claim 2 thus follows by combining \eqref{eqn1} with \eqref{eqn2}.

We now combine Claims 1 and 2 to finish the proof of the proposition. Taking into account \eqref{prob2}, for $k\ge1$ we have
\begin{align*}
\P\left(\mathcal{A}^c  \big| \mathcal{F}_{\mathfrak{t}_{{N_k}}} \right) &\ge  \E \left[ \P\left(\mathcal{A}^c  \big| \mathcal{F}_{R_{N_k}}\right)\1_{\{R_{N_k}<\infty\}}\big| \mathcal{F}_{\mathfrak{t}_{N_k}}\right]\stackrel{\eqref{prob2}}{\ge} c_{N_k}\P\left( R_{N_k}<\infty \big| \mathcal{F}_{\mathfrak{t}_{N_k}} \right).
 \end{align*}
Hence it follows that
$$\left(\frac{1}{c_{N_k}}+1\right)\P\left(\mathcal{A}^c  \big| \mathcal{F}_{\mathfrak{t}_{N_k}} \right)\ge \P\left( R_{N_k}<\infty \big| \mathcal{F}_{\mathfrak{t}_{N_k}} \right) + \P\left(\mathcal{A}^c  \big| \mathcal{F}_{\mathfrak{t}_{N_k}} \right)\ge  \mathbb{P}(\mathcal{A}^c \cup \{R_{N_k}<\infty\}|\mathcal{F}_{\mathfrak{t}_{N_k}}).$$
As a result of \eqref{prob1}, for $k\ge1$ we have 
$$\P\left(\mathcal{A}^c  \big| \mathcal{F}_{\mathfrak{t}_{N_k}} \right)\ge \frac{1-b_{N_k}}{1+1/c_{N_k}}.$$ 
Using L\'evy's zero-one law, we obtain 
$$1_{\mathcal{A}^c}\ge \frac{1-b_{\infty}}{1+1/c_{\infty}}.$$
Since $b_{\infty}<1$ and $c_{\infty}>0$, we can conclude that $\P(\mathcal{A}^c)=1$, {which proves \eqref{non.A}}. 
\end{proof}

\section{Proof of Theorem \ref{th:main}}
We will make use of the following proposition
\begin{proposition}\label{lem3x}
Let $x\in\Z$. Then, on the event $\{\mathbf{X}=(X_t)_{t\ge0}$ visits $x$ infinitely many times$\}$, we have $\max\{{L}(x-1,\infty),{L}(x,\infty), {L}(x+1,\infty)\}=\infty$.
\end{proposition}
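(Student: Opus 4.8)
The plan is to argue by contradiction. Since the case where one of the neighbours has infinite local time makes $\max\{L(x-1,\infty),L(x,\infty),L(x+1,\infty)\}=\infty$ trivially true, it suffices to show that, on the event that $x$ is visited infinitely often, the finiteness of \emph{both} neighbouring local times forces $L(x,\infty)=\infty$. Concretely, I would prove
$$\P\Big(\{x\ \text{visited i.o.}\}\cap\{L(x-1,\infty)<\infty\}\cap\{L(x,\infty)<\infty\}\cap\{L(x+1,\infty)<\infty\}\Big)=0.$$

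First I would isolate the sojourn structure of $\X$ at $x$. On the event that $x$ is visited infinitely often, let $T_1<T_2<\cdots$ be the successive arrival times at $x$ and let $\sigma_k$ be the duration of the $k$-th sojourn, so that $L(x,\infty)=\ell_x+\sum_{k\ge1}\sigma_k$. The key observation is that while $\X$ sits at $x$ the local times at the neighbours $x\pm1$ are frozen, hence the total exit rate from $x$ stays constant throughout a single sojourn. By the memoryless property of the driving exponentials in the canonical construction, conditionally on the history $\mathcal{G}_k$ up to the arrival time $T_k$ the sojourn $\sigma_k$ is exponentially distributed with the $\mathcal{G}_k$-measurable rate
$$r_k=w\big(L(x-1,T_k)\big)+w\big(L(x+1,T_k)\big)$$
(equivalently, in the canonical construction $\sigma_k$ is the minimum of two independent mean-one exponentials scaled by $w(L(x\pm1,T_k))^{-1}$).

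The heart of the argument is a uniform bound on these rates combined with a conditional Borel--Cantelli lemma. I would decompose the event of interest according to an integer bound on the neighbouring local times: for $K\in\N$ set
$$D_K=\{x\ \text{visited i.o.}\}\cap\{L(x-1,\infty)\le K\}\cap\{L(x+1,\infty)\le K\}.$$
By monotonicity of $t\mapsto L(\cdot,t)$ and of $w$, on $D_K$ one has $r_k\le 2w(K)$ for \emph{every} $k$. Taking $A_k=\{\sigma_k>1/(2w(K))\}$, the conditional exponential law gives $\P(A_k\mid\mathcal{G}_k)=e^{-r_k/(2w(K))}\ge e^{-1}$ on $D_K$, so $\sum_k\P(A_k\mid\mathcal{G}_k)=\infty$ there. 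Applying L\'evy's conditional second Borel--Cantelli lemma along the sequence of visits, the almost-sure identity $\{A_k\ \text{i.o.}\}=\{\sum_k\P(A_k\mid\mathcal{G}_k)=\infty\}$ shows that $A_k$ occurs infinitely often on $D_K$, whence $\sum_k\sigma_k=\infty$ and $L(x,\infty)=\infty$ almost surely on $D_K$. This contradicts $L(x,\infty)<\infty$, so $\P(D_K\cap\{L(x,\infty)<\infty\})=0$; letting $K\ti$ and taking the union over $K$ yields the displayed identity.

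The main obstacle is that the finiteness of the neighbouring local times is a tail event, so the rate bound $r_k\le 2w(K)$ is not available at any fixed deterministic time. This is precisely why I introduce the $K$-truncation and exploit monotonicity of the local times to make the bound hold simultaneously for all $k$ on $D_K$, and why I rely on the conclusion of L\'evy's conditional Borel--Cantelli lemma being an almost-sure identity of events valid on the whole sample space rather than on a prescribed future-dependent event. The only routine point to check carefully is the conditional law of $\sigma_k$, which follows from the constancy of the exit rate during a sojourn together with the strong Markov-type property of the canonical construction.
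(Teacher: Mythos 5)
Your proof is correct and follows essentially the same route as the paper: bound the exit rate from $x$ by $2w(K)$ on the event that both neighbouring local times stay below $K$, and conclude that the successive sojourn times at $x$ sum to infinity. The only cosmetic difference is the tool used for the last step --- the paper stochastically dominates $L(x,T_n)$ from below by $\frac{1}{w(A)}\sum_{i=1}^n\xi_i$ with $\xi_i$ i.i.d.\ exponentials, while you invoke L\'evy's conditional Borel--Cantelli lemma; both are valid and your handling of the future-measurability of the truncation event $D_K$ is sound.
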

\begin{proof}
 Suppose that ${L}(x-1, \infty)\wedge {L}(x+1,\infty)=:A<\infty$. Every time $\mathbf{X}$ visits $x$, it spends there an exponential amount of time with parameter smaller than  $w(A)$.
 Denote by $T_n$ the time of the $n$-th visit to $x$. We have that ${L}(x,{T}_n)$ is stochastically larger than $$ \frac{1}{w(A)}\sum_{i=1}^n \xi_i$$ where $\xi_i$ are i.i.d.\ exponential random variables with parameter 1. Under the assumption that $x$ is visited infinitely often, all $T_n$ are well defined, and since the above sum a.s.\ diverges, we have ${L}(x,\infty)=\infty$.
\end{proof}
The following result follows from the fact that if $X$ and $Y$ are two independent random variables then $\P(X=Y)=\int \P(X=y)\rmd F_{Y}(y)$, where $F_Y$ is the cumulative distribution function of $Y$.
\begin{lemma}\label{nomatch}
Let $X$ and $Y$ be two independent random variables, and the distribution of at least one of them does not have atoms. Then $\P(X=Y)=0$.
\end{lemma}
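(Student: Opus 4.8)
The plan is to reduce to the case where the atomless variable is $X$ and then integrate out $Y$. Without loss of generality I would assume that the distribution of $X$ has no atoms; since the hypothesis is symmetric in $X$ and $Y$, the remaining case follows immediately by relabelling. Write $\mu$ and $\nu$ for the laws of $X$ and $Y$ respectively, so that by independence the pair $(X,Y)$ has joint law $\mu\otimes\nu$ on $\R^2$.

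Next I would realize the event $\{X=Y\}$ as the diagonal $D=\{(x,y)\colon x=y\}$ and compute its measure under the product law. Since $\1_D$ is non-negative and measurable (the diagonal being closed, hence Borel, in $\R^2$), Tonelli's theorem lets me disintegrate the product measure along the second coordinate:
\begin{align*}
\P(X=Y)=(\mu\otimes\nu)(D)=\int_{\R}\mu(\{y\})\,\rmd\nu(y)=\int_{\R}\P(X=y)\,\rmd F_Y(y),
\end{align*}
which is precisely the identity quoted just before the statement.

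Finally, since the law of $X$ is atomless, $\P(X=y)=\mu(\{y\})=0$ for every $y\in\R$, so the integrand vanishes identically and the integral equals $0$. Hence $\P(X=Y)=0$, as claimed. There is no real obstacle here: the only point requiring (minor) care is the joint measurability needed to apply Tonelli, and this holds because $D$ is closed in $\R^2$; everything else is the elementary observation that an atomless measure assigns zero mass to each point.
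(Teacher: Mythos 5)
Your proof is correct and follows essentially the same route as the paper, which simply invokes the identity $\P(X=Y)=\int \P(X=y)\,\rmd F_Y(y)$ (justified by independence and Tonelli, as you spell out) together with the vanishing of $\P(X=y)$ for the atomless variable. The only addition you make is to record the reduction by symmetry to the case where $X$ is the atomless one, which is implicit in the paper.
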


\begin{proof}[Proof of Theorem \ref{th:main}. Part (a)]
{By virtue of Theorem~\ref{th:notransient}, $\X$ cannot be transient and thus there exists a.s. a vertex  which is visited infinitely often. Moreover, using Proposition~\ref{lem3x}, we infer that there exists a.s. at least one  vertex with unbounded local time.}
Hence, we only have to show that the process cannot get stuck at any subset of $\mathbb{Z}$. Indeed, setting $T_j=\inf\{s : X_s=j\}$,  we will show that $\P\left(T_j<\infty \right)=1$ for all $j\in \mathbb{Z}$. If for some $j\in\mathbb{Z}$, $\P\left(T_j=\infty \right)>0$ and assuming w.l.o.g that $j>0$, there exists a vertex $k$ such that $\P\left( L(k,\infty)=\infty,  L(k+1,\infty)<\infty \right)>0$. At the same time, applying Proposition~\ref{VRJP2vert} and the restriction principle to $\{ k, k+1\}$ we have $L(k,\infty)=L(k+1,\infty)=\infty$, a.s., which yields a contradiction. {By the same argument using the restriction principle, we have that a.s. $L(k,\infty)=\infty$ for all $k\in\Z$.}  
\end{proof}

\begin{proof}[Part (b)] {We showed in Part (a) that there exists a.s. at least one vertex $x$ such that $L(x,\infty)=\infty$ (note that this fact only requires Assumption W)}. 
The idea of the rest of the proof is the following; set w.l.o.g.\ $x=0$. 
If $L(0,\infty)=\infty$, then by considering the extension of VRJP to $\{0,1\}$ we obtain that vertex $1$ is visited infinitely often but $L(1,\infty)<\infty$. 
Then we must have $L(2,\infty)<\infty$, and here is why. Indeed, if both $0$ and $2$ have their local times going to infinity, then $L(1,\infty)$ must have simultaneously the distribution of $L_\infty$ for two {\it independent} processes, the extensions of VRJP to $\{0,1\}$ and to $\{1,2\}$ respectively.  However, this is impossible by Lemma~\ref{nomatch}. Finally, if $L(2,\infty)<\infty$ as well as $L(1,\infty)<\infty$, then vertex $2$ will be visited only finitely often.
Now we present the formal proof.

Using the restriction principle and Proposition~\ref{bounded}, we get then that  
\begin{equation}\label{3points}\P\left(\bigcup_{x\in \Z} C_x\right)=1,\end{equation}
where we set
$$C_{x}=\left\{
L(x,\infty)=\infty, L(x+1,\infty)<\infty, L( x-1,\infty)<\infty\right\}.$$
We next show that for each $x\in \Z$.
\begin{align}\label{5points}\P\left(\{ L(x+2,\infty)\vee  L(x- 2,\infty)=\infty\} \cap C_x\right)=0.\end{align} 
For simplicity, let us consider the case when $x=0$.  Let $\widetilde{\X}$ and $\widehat{\X}$ be the extensions of VRJP $\X$ to $\{0,1\}$ and $\{1,2\}$ respectively. Denote by $(\widetilde{L}(i,t), i\in\{0,1\})_{t\ge0}$ and $(\widehat{L}(i,t), i\in\{1,2\})_{t\ge0}$ the local times processes of $\widetilde{\X}$ and $\widehat{\X}$ respectively. Notice that on the event $C_0$, $\X$ spends an unbounded amount of time on $\{0,1\}$. Hence, by the restriction principle we must have  $$\widetilde{L}(1,\infty)=L(1,\infty)<\infty \quad\text{ on the event}\ C_0.$$ Furthermore, $\widetilde{L}(0,\infty)\wedge\widetilde{L}(1,\infty)$ has a non-atomic distribution by Proposition~\ref{bounded}. Similarly, on the event $\{L(2,\infty)=\infty\}$, we also have $\widehat L(1,\infty)=L(1,\infty)<\infty$. Since $\widetilde\X$ and $\widehat\X$ are independent,  $\widetilde{L}(0,\infty)$ and $\widetilde{L}(1,\infty)$ do not depend on $\widehat{L}(1,\infty)$. At the same time, $\widetilde{L}(1,\infty)=\widetilde{L}(0,\infty)\wedge\widetilde{L}(1,\infty)$ coincide with  $\widehat{L}(1,\infty)$ on the event $\{L(2,\infty)=\infty\}\cap C_0$. Consequently,
$$
\P\left(L(2,\infty)=\infty\}\cap C_0\right)\le \P\left(\widetilde{L}(0,\infty)\wedge \widetilde{L}(1,\infty)=\widehat{L}(1,\infty)\right)=0
$$
by Lemma~\ref{nomatch}. By the identical argument, we also get that $\P\left(L(-2,\infty)=\infty\}\cap C_0\right)=0$. Note that our argument does not depend on  the choice of $x$. Hence, \eqref{5points} is proved. Combining \eqref{3points} with \eqref{5points}, it follows that
$$\P\left( \exists x\in \Z: L(x,\infty)=\infty, L(x\pm 1)<\infty, L(x\pm2)<\infty  \right)=1.$$
Finally, let us fix $x\in \Z$ and assume that $L(x,\infty)=\infty, L(x\pm 1)<\infty, L(x\pm2)<\infty$. Notice that the jumps from $x$ to $x+2$ occur no more often that the events of the Poisson process with rate $w\left(L(x+2,\infty)\right)<\infty$. Since $\mathbf{X}$, however, spends only a finite amount of time staying at $x+1$, only finitely many such jumps will occur. The same holds for the jumps from ${x-1}$ to ${x-2}$. Hence $\mathbf{X}$ will eventually get stuck to $\{ x-1,x,x+1\}$.
\end{proof}

\section{Appendix}
\begin{proof}[Proof of Lemma \ref{martingale}]
We first show that $\mathbf M$ is a martingale. Denote $I_t:=\1_{\{\widetilde X_t=1\}}$. For small $h>0$, we have
\begin{align}\label{eq:differential}
\E[I_{t+h}-I_t|\mathcal{F}_t]
\nonumber &= \sum_{j\in\{0,1\}} (\1_{j=1}-I_t) \P[\widetilde X_{t+h}=j|\mathcal{F}_t]\\
\nonumber &=- I_t w\left(\widetilde{L}(0,t)\right)\cdot h+(1-I_t)w\left(\widetilde{L}(1,t)\right)\cdot h+o(h)\\
 &= \left(\1_{\{\widetilde X_t=0\}} W(1,t)-\1_{\{\widetilde X_t=1\}} W(0,t)\right)\cdot h + o(h).
\end{align}
Let $0<s<t$ and $n$ be fixed. Define $t_j=s+j(t-s)/n$ with $j\in\{0,1,\dots,n\}$. By virtue of \eqref{eq:differential} and the law of iterated expectations, we have   
\begin{align*}   \E\left[ I_t-I_s|\ \mathcal{F}_s  \right] & =\E\left[\left. \sum_{j=1}^n \E\left[I_{t_{j}}-I_{t_{j-1}}\ |\ \mathcal{F}_{t_{j-1}} \right]\ \right|\ \mathcal{F}_s \right]\\
& =\E\left[ \left.\sum_{j=1}^n \Pi(t_{j-1})(t_j-t_{j-1}) +n\cdot o\left(\frac{t-s}{n}\right) \ \right| \ \mathcal{F}_s  \right],
\end{align*}
where we set $\Pi(t)=\1_{\{\widetilde X_t=0\}} W(1,t)-\1_{\{\widetilde X_t=1\}} W(0,t)$.
Notice that the left hand side does not depend on $n$ while we can take the limit of the right hand side as $n\to\infty$ thanks to Lebesgue's dominated convergence theorem. Hence, we obtain that
$$\E\left[ I_t-I_s|\ \mathcal{F}_s  \right]=\E\left[\int_s^t \Pi(u)\rmd u\ | \ \mathcal{F}_s \right]$$ and thus $\E[ M_t|\ \mathcal{F}_s ]=M_s.$ We next turn to proving \eqref{eq:anglebracket}. Notice that
\begin{align*}
M_t^2
=&\; I_t- 2\left(M_t+\int_0^t \Pi(s)\rmd s\right)\int_0^t \Pi(s)\rmd s + \left(\int_0^t \Pi(s)\rmd s \right)^2\\
=&\; M_t+\int_0^t \Pi(s)\rmd s - 2M_t\int_0^t \Pi(s)\rmd s - \left(\int_0^t \Pi(s)\rmd s \right)^2\\
\stackrel{\eqref{changvar}}{=}&\; M_t + \int_0^t\Pi(s) ds - 2\left( \int_0^t M_s \Pi(s)\rmd s +\int_0^t\left(\int_0^s \Pi(u)\rmd u \right)\rmd M_s \right)  \\
& \quad - 2\int_0^t \Pi(s)\left(\int_0^s \Pi(u)\rmd u\right)ds \\
=&\; Q_t + \int_0^t \Pi(s) \rmd s - 2\int_0^t \Pi(s)I_s\rmd s= Q_t +\int_0^t\left(\1_{\{\widetilde X_s=0\}}W(1,s)+\1_{\{\widetilde X_s=1\}}W(0,s)\right) \rmd s,
\end{align*}
where we set $$Q_t:=M_t-2\int_0^t\left(\int_0^s \Pi(u)\rmd u \right)\rmd M_s,$$
which is a local martingale. The lemma is proved.
\end{proof}
\begin{proof}[Proof of Lemma \ref{boundedjump}]
We assume w.l.o.g. that $N_0=0$. We fix $0<n<\infty$ and set $\tau_n=\inf\{t: N_t\le - n\}$. Note that $N_{t\wedge\tau_n}$ is a martingale and $N_{t\wedge\tau_n}\ge N_{t\wedge\tau_n-}-C\ge -n-C$.   Therefore, $N_{t\wedge\tau_n}+n+C$ is a nonnegative martingale which converges almost surely to a finite random variable. Hence,
$$\left\{ \liminf_{t\to\infty}N_t>-\infty\right\} =\bigcup_{n=1}^{\infty}\left\{\tau_n=\infty\right\}\subseteq \left\{\lim_{t\to\infty}N_t \ \text{exists and is finite}\right\}.$$
Similarly, replacing $N_t$ by $-N_t$, we also have
$$\left\{ \limsup_{t\to\infty}N_t<+\infty\right\} \subseteq \left\{\lim_{t\to\infty}N_t \ \text{exists and is finite}\right\}.$$
\end{proof}
\section*{Acknowledgement} {A.C. and T.M.N.  work is partially supported by ARC grant  DP180100613. A.C. is also supported by ARC Centre of Excellence for Mathematical and Statistical Frontiers (ACEMS) CE140100049. S.V.  research is partially supported by Crafoord grant no. 20190667 and Swedish Research Council grant VR 2019-04173. The authors would like to thank Masato Takei and two anonymous referees for their thorough reading and their constructive suggestions which improved the manuscript.}

\end{document}